\documentclass[10pt,notitlepage,twoside,a4paper]{amsart}
 \usepackage{amsfonts}

\usepackage{amsmath,amssymb,enumerate}

\usepackage{epsfig,fancyhdr,color}

\usepackage{amssymb}
\usepackage{amsmath,amsthm}
\usepackage{latexsym}
\usepackage{amscd}
\usepackage{psfrag}
\usepackage{graphicx}
\usepackage[latin1]{inputenc}
\usepackage[all]{xy}
\usepackage[mathcal]{eucal}

\definecolor{NoteColor}{rgb}{1,0,0}


\renewcommand{\textsc}{\textcolor{red}}

%


\newtheorem{theorem}{\rm\bf Theorem}[section]
\newtheorem{proposition}[theorem]{\rm\bf Proposition}
\newtheorem{lemma}[theorem]{\rm\bf Lemma}
\newtheorem{corollary}[theorem]{\rm\bf Corollary}
\newtheorem*{theorem 1}{\rm\bf Proposition 1}
\newtheorem*{theorem 2}{\rm\bf Proposition 2}

\theoremstyle{definition}
\newtheorem{definition}[theorem]{\rm\bf Definition}
\newtheorem{problems}[theorem]{\rm\bf Problems}
\newtheorem{problem}[theorem]{\rm\bf Problem}

\theoremstyle{remark}

\newtheorem{question}[theorem]{\rm\bf Question}

\def\interieur#1{\mathord{\mathop{\kern 0pt #1}\limits^\circ}}

\title[Actions of mapping class groups]{Actions of mapping class groups}

\date{\today}

\author{Athanase Papadopoulos}
\address{Athanase Papadopoulos, Institut de Recherche Math\'ematique Avanc\'ee, Universit{\'e} de Strasbourg and CNRS,
7 rue Ren\'e Descartes,
 67084 Strasbourg Cedex, France;  
Erwin Schr\"odinger International Institute for Mathematical Physics, 
Boltzmanngasse 9, 1090, Wien, Austria}
\email{papadop@math.unistra.fr}

 \begin{document}

\maketitle


\thispagestyle{empty}


\begin{abstract}
\vskip 3mm\footnotesize{

\vskip 4.5mm
\noindent
This paper has three parts. The first part is a general introduction to rigidity and to rigid actions of mapping class group actions on various spaces. In the second part, we describe in detail four rigidity results that concern actions of mapping class groups on spaces of foliations and of laminations, namely, Thurston's sphere of projective foliations equipped with its projective piecewise-linear structure, the space of unmeasured foliations equipped with the quotient topology, the reduced Bers boundary, and the space of geodesic laminations equipped with the Thurston topology. In the third part, we present some perspectives and open problems on other actions of mapping class groups.

\vspace*{2mm}
\noindent{2010 Mathematics Subject Classification:  57M50, 30F40, 20F65, 57R30, 57M60.}

\vspace*{2mm}
\noindent{Keywords and Phrases: mapping class group, surface, rigidity, curve complex,  measured foliation, unmeasured foliation,  hyperbolic surface, 
 geodesic lamination, Hausdorff topology, Thurston topology, Teichm\"uller space, reduced Bers boundary, Grothendieck-Teichm\"uller theory, higher Teichm\"uller theory, solenoid.}}

\vspace*{2mm}
\noindent{This paper will appear in the \emph{Handbook of Group actions}, vol. I, ed. L. Ji, A. Papadopoulos and S.-T. Yau, Higher Eucation Press and International Press, Beijing-Boston, 2014.}

\vspace*{2mm}
\noindent{The author acknowledges support from the French program ANR FINSLER and from the Erwin Schr\"odinger International Institute for Mathematical Phy\-sics (Vienna).}

\end{abstract}

 \section{introduction}

  The mapping class group of an oriented surface $S$ of finite topological type is the group of isotopy classes of orientation-preserving homeomorphisms of $S$, with the group operation inherited from composition of homeomorphisms. The extended mapping class group of $S$ is the group of isotopy classes of all  homeomorphisms of $S$.  The mapping class group is a subgroup of index two of the extended mapping class group.

The study of mapping class groups of surfaces started with the works of Dehn and Nielsen around 1913 and it went on interrupted. Besides being basic objects in surface topology, these groups appear now in several branches of mathematics: combinatorial group theory, 3-manifolds (e.g. through Heegaard splittings), 4-manifolds (e.g. through Lefschetz fibrations), complex geometry (throughout their actions on Teichm\"uller space and other complex spaces, and as monodromies of singularities) and algebraic geometry (works of Weil, Grothendieck, Mumford, etc.). Mapping class groups also appear in theoretical physics (quantum field theories and string theory). These groups admit actions on various spaces which are very different in nature:   combinatorial, algebraic, holomorphic, real-analytic, isometric, piecewise-linear, etc. and in each case the action gives new information on the groups and on the space on which it acts. There is also a profinite theory and an approach through categories and functors (the so-called Grothendieck-Teichm\"uller theory). The diversity of the points of view and the naturality of the actions make the study of mapping class groups an extremely rich subject.

 There are several strong analogies and relations between mapping class groups and other groups, e.g. arithmetic groups (like the groups $\mathrm{SL}(n,\mathbb{Z})$), braid groups, automorphisms of free groups,  CAT(0) groups, hyperbolic groups, Coxeter groups (as a consequence of the fact that a mapping class groups is generated by a finite number of involutions) and Artin groups (which in some sense are generalizations of Coxeter groups).

One famous example of parallel developments of mapping class groups and arithmetic groups is the work done on the stability of the homology or cohomology of these groups and the computation of their stable homology or cohomology. We recall some of the results. Borel proved in \cite{Borel1975}  that the cohomology of arithmetic groups like $\mathrm{SL}(n,\mathbb{Z})$ and their finite index subgroups is stable. Using the combinatorial action of the mapping class group on the curve and the arc complex, Harer proved that the homology groups of the mapping class group are stable \cite{Harer1985}. While Borel's result says that the cohomology of  $\mathrm{SL}(n,\mathbb{Z})$ is independent of $n$, for $n$ large enough, Harer's result says that the homology of a surface mapping class group is independent of the genus $g$ and the number $n$ of boundary components, for $g$  and $n$ large enough. In the same paper \cite{Borel1975}, Borel computed the  groups  $H_k(\mathrm{SL}(n,\mathbb{Z}); \mathbb{Q})$, for $n>>k$. Stable homology groups of mapping class groups were computed more recently by Galatius \cite{G2004} using the work of Madsen and Weiss \cite{MW2007}. Stability results for non-orientable surfaces were obtained by Wahl \cite{Wahl-homo}, both in terms of genus and of number of boundary components. Her proof is simpler than that of Harer, and it is also based on the action of the mapping class group on complexes of arcs and of curves. We refer the reader to the survey \cite{Ji-Handbook} by Lizhen Ji for a thorough description of  the analogies between mapping class groups and arithmetic groups. There is also an analogous theory for the stability of the homology for automorphism groups of free groups, which was worked out by Hatcher in 
\cite{Hatcher1995}. Let us note that the group $\mathrm{SL}(2,\mathbb{Z})$ belongs to the three classes of groups mentioned: it is the mapping class group of the torus, it is the outer automorphism group of the free group $F_2$ on two generators, and it is an arithmetic group. 

All the analogies and parallels between mapping class groups and other groups show how mathematics is connected.

 In this paper, we survey some rigidity results on actions of mapping class groups, and we try to give some motivation for them.  The content is as follows.

 In \S 2, after a short introduction to rigidity, we review rigidity theorems concerning mapping class group actions on several spaces. The results say that except for some special surfaces, the homomorphism induced by the action from the mapping class group into the automorphism group of the structure with which the space is equipped is an isomorphism.

In \S 3, we review in some detail four theorems on rigid actions of the mapping class group. The reason for which we present these theorems together is that they have several common features. First of all, they concern actions on spaces of foliations. Secondly, the corresponding rigidity results concern \emph{homeomorphism} groups except in the first case, where the homeomorphism is assumed to be piecewise-linear. Homeomophism rigidity theorems are rare in this context, and to our knowledge there are the only known ones. Thirdly, all the spaces in these examples can be seen as boundary spaces. The fourth common feature, which is the most important one, concerns the proofs of these results. In all four cases, it is the ``badness" of the space (non-linearity and non-Hausdorffness), and a hierarchy on this badness, which allows us to get our hand on the set of systems of simple closed curves which is naturally embedded in the space. This subset is then shown to be  invariant by the automorphisms of the structure, with the number of components of the curves preserved. From this, one gets an action of a homeomorphism of the given space on the curve complex, and applying the theorem of Ivanov-Korkmaz-Luo on the rigidity of this action (Theorem \ref{th:IKL} below) shows that the homeomorphism we started with is induced by an element of the mapping class group. 

In \S 4, we briefly review some other contexts in which mapping class groups act, and we mention some questions.
       
            I would like to thank Olivier Guichard and Lizhen Ji who made comments on an early version of this paper.
            
\section{Rigidity and actions of mapping class groups}\label{s:rigidity}

  We start with some remarks concerning rigidity.
 
There are rigidity results in various mathematical contexts which are analogous in spirit to the rigidity results that we present below, in the sense that they say that the automorphisms of a certain structure arise from geometric transformations of the underlying space. One example is the following result for  linear maps between Banach spaces:

  \begin{theorem}[Mazur and Ulam, \cite{MU} 1932]  For any normed vector spaces $V$ and $V'$, any isometry $T:V\to V'$ which is distance-preserving and which preserves the origins is linear.
  \end{theorem}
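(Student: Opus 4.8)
The plan is to reduce the linearity of $T$ to the single assertion that $T$ preserves midpoints, namely that $T\!\left(\frac{x+y}{2}\right)=\frac{T(x)+T(y)}{2}$ for all $x,y\in V$, and then to establish this midpoint property by a symmetry argument due to Mazur and Ulam. Throughout I read ``isometry'' as being onto, so that $T^{-1}$ exists and is again a distance-preserving surjection; surjectivity is genuinely needed here, since non-surjective isometries need not be affine.

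First I would carry out the reduction. Granting midpoint preservation, setting $y=0$ and using $T(0)=0$ gives $T(x/2)=\frac12 T(x)$; combining this with the midpoint identity yields $T(x+y)=T(x)+T(y)$, so $T$ is additive. An additive map that is continuous (and $T$ is continuous, indeed $1$-Lipschitz, because it is distance-preserving) is automatically $\mathbb{R}$-linear: additivity forces $\mathbb{Q}$-homogeneity, and continuity upgrades this to homogeneity over all real scalars. Thus everything hinges on the midpoint statement.

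To prove midpoint preservation, fix $x,y\in V$ and write $m=\frac{x+y}{2}$; set $x'=T(x)$, $y'=T(y)$ and $m'=\frac{x'+y'}{2}$. The key idea is to use the point reflections $\sigma(z)=x+y-z$ on $V$ and $\sigma'(w)=x'+y'-w$ on $V'$, which are involutive surjective isometries fixing $m$ and $m'$ respectively. Let $G$ be the group of all surjective isometries of $V$ fixing both $x$ and $y$; every $g\in G$ keeps the bounded set $\{z:\|z-x\|=\|z-y\|=\frac12\|x-y\|\}$ invariant, so $\lambda:=\sup_{g\in G}\|g(m)-m\|$ is finite. The heart of the argument is a self-improving estimate: for each $g\in G$ the composite $h=\sigma\, g^{-1}\,\sigma\, g$ again lies in $G$, and a direct computation using $\sigma(m)=m$ shows $\|h(m)-m\|=2\,\|g(m)-m\|$. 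Taking the supremum forces $2\lambda\le\lambda$, hence $\lambda=0$, so \emph{every} element of $G$ fixes $m$.

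Finally I would feed the transported reflection back in. The map $g=T^{-1}\sigma' T\sigma$ is a surjective isometry of $V$, and one checks $g(x)=x$ and $g(y)=y$, so $g\in G$ and therefore $g(m)=m$ by the previous step. Unwinding this identity, and using $\sigma(m)=m$, gives $\sigma'\big(T(m)\big)=T(m)$; since the only fixed point of $\sigma'$ is $m'$, we conclude $T(m)=m'=\frac{T(x)+T(y)}{2}$, which is exactly midpoint preservation. I expect the main obstacle to be the doubling estimate $\|h(m)-m\|=2\,\|g(m)-m\|$: it is where the specific algebra of the point reflection is exploited, and it is what makes the equidistant set collapse onto its midpoint even when the norm is not strictly convex and that set is large.
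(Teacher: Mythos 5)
Your proof is correct and complete: the paper itself gives no argument for this statement (it simply points to Banach's monograph for a proof), and what you have written is precisely the classical Mazur--Ulam reflection argument found there --- reduction to midpoint preservation, the doubling estimate $\|h(m)-m\|=2\|g(m)-m\|$ for $h=\sigma g^{-1}\sigma g$ forcing $\lambda=0$, and the transported reflection $g=T^{-1}\sigma' T\sigma$. You were also right to flag surjectivity: as stated, ``isometry'' must be read as onto, since non-surjective distance-preserving maps between normed spaces need not be affine, and your argument uses $T^{-1}$ exactly where this hypothesis is needed.
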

 
Thus, an isometry between normed spaces arises necessarily from the linear structure of the underlying vector spaces, with no norms involved.
  See Banach \cite{Banach1932} Chapter 7 \S 4 (p. 166) for a poof.   There is also a generalization to approximate isometries, see \cite{BS}.
    
    Another characteristic example in the context of Banach spaces is the following result on the isometries of $L^p$ spaces:
    
\begin{theorem}[Banach 1932, see Lamperti  \cite{Lamperti} for generalizations] 
 Let $1\leq p<\infty$ and $p\not=2$, and let $T$ be an isometry of the metric space $L^p([0,1],\mathbb{R})$.
   Then, there exist a  Borel measurable mapping $\phi$ of $[0,1]$ onto itself which is surjective (up to measure zero)
   and an element $h\in L^p([0,1],\mathbb{R})$ such that for every $f$ in $L^p([0,1],\mathbb{R})$ we have $T(f)(x)= h(x)f(\phi (x))$.  
   \end{theorem}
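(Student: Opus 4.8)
The plan is to deduce the weighted-composition form from two facts: that for $p\neq 2$ the relation ``$f$ and $g$ have essentially disjoint supports'' can be detected purely from the norm of $L^{p}$, and that a bijective isometry must respect this relation. First I would normalise the problem. Since $T$ is a (surjective) isometry of the normed space $L^{p}([0,1],\mathbb{R})$, the Mazur--Ulam theorem quoted above shows that $f\mapsto T(f)-T(0)$ is linear; as the asserted form $T(f)(x)=h(x)f(\phi(x))$ already forces $T(0)=0$, I may and do assume from the outset that $T$ is a bijective \emph{linear} isometry.

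The analytic heart of the proof is a metric description of disjointness. I would introduce, for real numbers $u,v$, the quantity
\[
\psi(u,v)=|u+v|^{p}+|u-v|^{p}-2|u|^{p}-2|v|^{p},
\]
which is positively homogeneous of degree $p$ and vanishes whenever $uv=0$. The key point, and the place where the hypothesis $p\neq 2$ enters decisively, is that for $p\neq 2$ the function $\psi$ has a constant sign (nonnegative for $p>2$, nonpositive for $1\leq p<2$) and vanishes \emph{only} when $uv=0$; this follows from the strict convexity of $t\mapsto|t|^{p}$ when $p>1$ and from the direct computation $\psi(u,v)=-2\min(|u|,|v|)$ when $p=1$ (the case $p=2$ is exactly the parallelogram law, where $\psi\equiv 0$ and the argument collapses). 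Integrating $\psi(f(x),g(x))$ over $[0,1]$ then yields that
\[
\|f+g\|_{p}^{p}+\|f-g\|_{p}^{p}=2\|f\|_{p}^{p}+2\|g\|_{p}^{p}
\]
holds if and only if $f$ and $g$ have essentially disjoint supports. Since both sides are expressed through the norm alone and $T$ is linear, this relation is preserved by $T$ and by $T^{-1}$; hence $T$ carries functions with disjoint supports to functions with disjoint supports.

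With disjointness preservation in hand, I would reconstruct the point map $\phi$ and the weight $h$. For a measurable set $E\subseteq[0,1]$ the indicators $\chi_{E}$ and $\chi_{[0,1]\setminus E}$ are disjoint, so their images under $T$ are disjointly supported; using additivity of $T$ on disjointly supported indicators one checks that the assignment sending a set $E$ to the essential support of $T\chi_{E}$ is a Boolean $\sigma$-homomorphism of the measure algebra of $[0,1]$, and it is an isomorphism because $T$ is bijective. By the classical representation of such homomorphisms of the Lebesgue measure algebra (von Neumann), it is induced by a Borel point transformation $\phi$ of $[0,1]$, surjective up to measure zero. Setting $h=T(1)$, the image of the constant function $1$, one obtains $T\chi_{E}(x)=h(x)\chi_{E}(\phi(x))$ on indicators; extending first by linearity to simple functions and then, since an isometry is continuous, by density to all of $L^{p}$, gives the desired identity $T(f)(x)=h(x)f(\phi(x))$.

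I expect the main obstacle to be twofold. The technical crux is the strict sign-definiteness of $\psi$ for $p\neq 2$, since this is precisely what makes disjointness a metric notion and what fails at $p=2$; the structural crux is the passage from the induced Boolean homomorphism of the measure algebra to a genuine point map $\phi$, together with the verification, via $\|Tf\|_{p}=\|f\|_{p}$, that the weight $h$ and the transformation $\phi$ are mutually consistent (on a general measure space this consistency is carried by a Radon--Nikodym factor, which on $[0,1]$ with Lebesgue measure is absorbed into $h$).
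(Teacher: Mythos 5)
The paper states this theorem without proof, citing only Banach's monograph and Lamperti's paper, so there is no internal argument to compare against; your outline is, in substance, Lamperti's classical proof: a Clarkson-type pointwise inequality whose equality case characterizes disjointness of supports when $p\neq 2$, the consequent preservation of disjointness by a linear isometry, the induced $\sigma$-homomorphism of the measure algebra realized by a point map, and the extension from indicators through simple functions by density. Two points deserve tightening. First, the Mazur--Ulam reduction is stated circularly: a translation of $L^p$ is a perfectly good isometry of the metric space but is not of weighted-composition form, so the theorem as stated really concerns linear (equivalently, origin-preserving surjective) isometries; you should say that you establish the representation for $f\mapsto T(f)-T(0)$, which is linear and surjective by Mazur--Ulam, rather than ``assume'' $T(0)=0$ on the grounds that the conclusion forces it. Second, the sign-definiteness of $\psi$ and, more importantly, the identification of its zero set with $\{uv=0\}$ for $1<p<2$ and for $p>2$ do not follow from strict convexity of $t\mapsto|t|^p$ alone; the standard route (as in Lamperti) is the scalar Clarkson inequality $|u+v|^p+|u-v|^p\lessgtr 2\left(|u|^2+|v|^2\right)^{p/2}$ combined with the strict sub- or superadditivity of $t\mapsto t^{p/2}$ on $[0,\infty)$, whose equality case is exactly $uv=0$. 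With those repairs, and granting the quoted von Neumann representation of automorphisms of the Lebesgue measure algebra of $[0,1]$ by point transformations, your argument is complete and agrees with the proof in the cited reference.
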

     
  This says that an isometry of the $L^p$ space arises necessarily from a transformation of the underlying measure spaces. The theorem is also discussed in the paper \cite{Nowak} in Volume II of this series.

     There are other such rigidity results for isometries between Banach spaces, see in particular Chapter 11 \S 5 of \cite{Banach1932}.

 The following is a classical rigidity result, see \cite{Thurston1997} p. 63:
 
 \begin{theorem} 
    Any diffeomorphism of hyperbolic space that takes all hyperbolic lines to hyperbolic lines is an isometry.
    \end{theorem}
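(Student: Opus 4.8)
The plan is to pass to the projective (Klein--Beltrami) model of hyperbolic space, in which hyperbolic lines become ordinary Euclidean straight lines, and then to exploit the rigidity of line-preserving maps. Throughout I assume the dimension $n$ is at least $2$, since in dimension $1$ the hypothesis is vacuous and the conclusion false. First I would realize $\mathbb{H}^n$ as the open unit ball $B^n \subset \mathbb{R}^n \subset \mathbb{RP}^n$ carrying the Klein metric, whose defining feature is that its geodesics are exactly the Euclidean chords of $B^n$, i.e. the intersections of the ball with affine lines. Under this identification the given diffeomorphism $f$ becomes a diffeomorphism of $B^n$ onto itself that carries Euclidean segments to Euclidean segments, and it suffices to show that such an $f$ is a hyperbolic isometry of the Klein model.

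Next I would show that a line-preserving diffeomorphism of the ball is the restriction of a projective transformation. Parametrizing a segment as $t \mapsto p+tv$ and setting $\gamma(t)=f(p+tv)$, the hypothesis says $\gamma$ has straight image, which is equivalent to $\ddot\gamma$ being proportional to $\dot\gamma$ for all $t$. In terms of $f$ this reads $D^2f(v,v)\parallel Df(v)$ at every point and for every direction $v$; this is precisely the statement that $f$ maps the geodesics of the flat connection to geodesics, i.e. that $f$ is \emph{projective}. Polarizing in $v$ (which uses $n\geq 2$ to separate directions) and integrating this second-order condition identifies $f$, in homogeneous coordinates, with a linear map, that is, with the restriction of a projective transformation $A\in\mathrm{PGL}(n+1,\mathbb{R})$. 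Since $f$ is smooth, the exotic additive/field-automorphism solutions that complicate the purely set-theoretic fundamental theorem of projective geometry are automatically excluded, so continuity hypotheses are free here.

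Finally I would recover the isometry property from the boundary. Because $f$ is a diffeomorphism of $B^n$ onto itself and $A$ is a homeomorphism of $\mathbb{RP}^n$ restricting to $f$, the map $A$ sends the open ball onto itself and hence, by continuity, sends the boundary sphere $S^{n-1}=\partial B^n$ onto itself. This sphere is the absolute quadric of the Klein model, namely the zero locus in $\mathbb{RP}^n$ of the form $-X_0^2+X_1^2+\cdots+X_n^2$ of signature $(n,1)$ defining the ball. A projective transformation preserving this quadric preserves the form up to scale, so $A\in\mathrm{PO}(n,1)$; and $\mathrm{PO}(n,1)$ acting on $B^n$ is exactly the full isometry group of the Klein model of $\mathbb{H}^n$. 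Therefore $f$ is an isometry. The hard part is the middle step: turning line preservation into projectivity. Smoothness makes it tractable, but integrating the projectivity condition to pin $f$ down as an element of $\mathrm{PGL}(n+1,\mathbb{R})$ is the technical core of the argument.
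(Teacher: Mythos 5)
The paper does not prove this theorem---it only cites it from Thurston's book \cite{Thurston1997}---and your argument is exactly the standard proof given in that source: pass to the Klein model so that hyperbolic lines become Euclidean chords, use the smooth local fundamental theorem of projective geometry (the condition $D^2f(v,v)\parallel Df(v)$, polarized and integrated) to see that a chord-preserving diffeomorphism of the ball is the restriction of an element of $\mathrm{PGL}(n+1,\mathbb{R})$, and conclude that a projective transformation preserving the absolute quadric lies in $\mathrm{PO}(n,1)$, the isometry group of the model. Your outline is correct, and you rightly identify the integration of the projectivity condition as the only genuinely technical step.
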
  
Another example of a rigidity result is the following classical theorem of Mostow (\cite{mos2}  \cite{Mostow1973}):

\begin{theorem} 
Any isomorphism between fundamental groups of complete finite-volume hyperbolic manifolds of the same dimension $\geq 3$ is induced by a unique isometry between the two manifolds.
\end{theorem}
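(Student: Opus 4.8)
The plan is to reduce the statement to the rigidity of the induced action on the sphere at infinity of hyperbolic space, exploiting that the fundamental groups act on $\mathbb{H}^n$ as lattices. Write $M = \Gamma \backslash \mathbb{H}^n$ and $N = \Lambda \backslash \mathbb{H}^n$, where $\Gamma$ and $\Lambda$ are discrete subgroups of $\mathrm{Isom}(\mathbb{H}^n)$ of finite covolume, and let $\phi : \Gamma \to \Lambda$ denote the given isomorphism. I will present the core argument in the cocompact (closed) case, where it is cleanest, and indicate at the end the adjustments needed to treat cusps. First I would invoke the Milnor--\v{S}varc lemma to identify each group, equipped with a word metric, with an orbit in $\mathbb{H}^n$; since $\phi$ is an isomorphism of finitely generated groups, it is then a quasi-isometry for the word metrics, and hence it induces a $\phi$-equivariant quasi-isometry $\tilde F : \mathbb{H}^n \to \mathbb{H}^n$.

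Next I would promote this quasi-isometry to a boundary map. Because hyperbolic space is Gromov-hyperbolic, the image of a geodesic under $\tilde F$ is a quasi-geodesic, and by the Morse stability lemma every quasi-geodesic stays within bounded Hausdorff distance of a genuine geodesic with the same endpoints at infinity. Consequently $\tilde F$ extends to a homeomorphism $\partial \tilde F : S^{n-1} \to S^{n-1}$ of the sphere at infinity, and this extension is $\phi$-equivariant, in the sense that $\partial \tilde F \circ \gamma = \phi(\gamma) \circ \partial \tilde F$ on $S^{n-1}$ for every $\gamma \in \Gamma$.

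The crucial and hardest step is to show that $\partial \tilde F$ is in fact conformal, i.e.\ a M\"obius transformation of $S^{n-1}$. The quasi-isometry constants make $\partial \tilde F$ quasiconformal, and one shows that it is absolutely continuous with a measurable conformal dilatation. I would then use the ergodicity of the $\Gamma$-action on the space of pairs $S^{n-1} \times S^{n-1}$, equivalently the ergodicity of the geodesic flow on the unit tangent bundle of the finite-volume manifold $M$: the dilatation field is $\Gamma$-invariant, so by ergodicity it is almost everywhere constant, forcing $\partial \tilde F$ to be conformal almost everywhere and therefore conformal. It is exactly here that the hypothesis $n \geq 3$ is indispensable, since by Liouville's theorem a conformal homeomorphism of $S^{n-1}$ with $n-1 \geq 2$ is automatically the restriction of a M\"obius transformation; in dimension two this conclusion fails, which reflects the nontriviality of Teichm\"uller space and explains why the theorem excludes surfaces.

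Finally, a M\"obius transformation of $S^{n-1}$ is the boundary value of a unique isometry $\Phi$ of $\mathbb{H}^n$, and the equivariance relation shows that $\Phi$ conjugates $\Gamma$ to $\Lambda$ and realizes $\phi$; hence $\Phi$ descends to an isometry $M \to N$ inducing $\phi$ on fundamental groups. Uniqueness follows because two isometries inducing the same map on $\pi_1$ agree on a $\Gamma$-orbit and therefore coincide. The main obstacle in the whole scheme is the conformality step, and the finite-volume case carries an additional difficulty there: the action is no longer cocompact, so one must replace the naive Milnor--\v{S}varc identification by working with a truncated (neutered) space and control the geometry near the cusps, following Prasad's extension of Mostow's theorem. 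The remaining steps then proceed as above.
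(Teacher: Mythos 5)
The paper does not actually prove this statement: it is quoted as Mostow's classical rigidity theorem, with references to Mostow's papers and to Thurston's Princeton notes, so there is no in-paper argument to compare yours against step by step. Your outline is the standard Mostow route (equivariant quasi-isometry via Milnor--\v{S}varc, Morse stability of quasi-geodesics, boundary extension, quasiconformality, ergodicity, Liouville). The proof the paper points to in Thurston's notes establishes conformality of the boundary map by a genuinely different method, via the Gromov norm and the fact that the boundary map must carry vertices of regular ideal simplices to vertices of regular ideal simplices; your route is closer to Mostow's original and adapts to other rank-one settings, while the Gromov-norm route is more self-contained in dimension three.

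There is, however, one genuine gap in your sketch, located exactly at the step you identify as the crux. Ergodicity of $\Gamma$ on $S^{n-1}$ shows that the measurable dilatation of $\partial\tilde F$ is almost everywhere equal to a constant $K$, but this does not force $K=1$: a homeomorphism can have constant dilatation $K>1$ without being conformal. The missing ingredient is the analysis of the $\Gamma$-equivariant measurable field of distortion ellipsoids coming from the a.e.\ derivative of $\partial\tilde F$: if this field were not a.e.\ round it would single out a measurable $\Gamma$-invariant reduction (for instance the span of the major axes), and one rules this out using ergodicity of $\Gamma$ on $S^{n-1}\times S^{n-1}$ together with the transitivity of point stabilizers on tangent directions. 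Only after that does Liouville's theorem apply. A smaller slip: two isometries inducing the same map on $\pi_1$ do not obviously ``agree on a $\Gamma$-orbit''; the clean argument is that $\tilde\Phi_2^{-1}\tilde\Phi_1$ centralizes $\Gamma$, and the centralizer of a lattice in $\mathrm{Isom}(\mathbb{H}^n)$ is trivial, e.g.\ because the fixed-point pairs of hyperbolic elements are dense in $S^{n-1}\times S^{n-1}$. Your deferral of the cusped finite-volume case to Prasad's neutered-space argument is appropriate, since Mostow's original theorem covers only the cocompact case.
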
  

Tha above form of Mostow rigidity is well known to low-dimensional topologists and geometers because a proof is contained in Thurston's Princeton famous notes \cite{Thurston1979}.  Mostow proved more general version which holds for locally symmetric spaces of finite volume. There are also stonger versions due to Margulis and Prasad. See the discussion in the paper \cite{LJ} in this Volume.

In another direction, Mostow's rigidity theorem has a wide generalization to a rigidity theorem concerning hyperbolic 3-manifolds. The result is known as the Ending Lemination Conjecture, formulated  by Thurston and it proved by Brock, Canary and Minsky, see \cite{Minsky2010} \cite{BCM2012}. The result says that if two hyperbolic 3-manifolds are homeomorphic by a homeomorphism which preserves the so-called end invariants, then they are isometric. We shall mention this results and the end invariants several times in what follows.

   Let us now recall a well-known rigidity result for the action of the mapping class group.   
 
 We shall call a {\it curve} on $S$ is a submanifold of the interior of $S$ which is homeomorphic to a circle.
   A curve is said to be {\it essential}  if it is not homotopic to a point or to a boundary component of $S$. The {\it curve complex}  $C(S)$ is the simplicial complex whose  $k$-simplices, for all $k\geq 0$, are the $k+1$ isotopy classes of essential pairwise non-isotopic and pairwise disjoint essential curves. The extended mapping class group acts simplicially on $C(S)$.
 
The curve complex was introduced by Harvey in 1978 \cite{Harvey1981}, with the idea that this complex encodes some boundary structure for Teichm\"uller or for Riemann's moduli space, in analogy with Tits buildings which encode boundary structure of symmetric spaces and Lie groups. 

 The following rigidity result concerning the curve complex says that except for some special cases, any automorphism of the curve complex is induced by a homeomorphism of the underlying surface. 
 
 \begin{theorem}[Ivanov-Korkmaz-Luo  \cite{ivanov1997} \cite{korkmaz} \cite{Luo2000}]\label{th:IKL}  Consider a surface $S_{g,n}$ which is not  a sphere with at most four holes or a torus with at most one hole. 
Then:
\begin{enumerate} 
\item  For $(g,n)\not\in\{(1,2),(2,0)\}$, the natural homomorphism $$\Gamma^*(S_{g,n})\to \mathrm{Aut}(C(S_{g,n}))$$ is an isomorphism.

\item  The natural homomorphism $\Gamma^*(S_{2,0})\to \mathrm{Aut}(C(S_{2,0}))$ is surjective and its kernel is of order two, generated by the hyperelliptic involution.

\item\label{LLuo} The natural homomorphism $\Gamma^*(S_{1,2})\to  \mathrm{Aut}(C(S_{1,2}))$ is neither surjective nor injective. The kernel of this homomorphism is a subgroup of index 2 of $\Gamma^*(S_{1,2})$ generated by the hyperelliptic involution and its image is a subgroup of index 5 in $ \mathrm{Aut}(C(S_{1,2}))$. The image consists in the simplicial automorphisms of 
$C(S_{1,2})$ that preserve  the set of vertices represented by nonseparating curves.

\end{enumerate}
\end{theorem}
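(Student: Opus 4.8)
The plan is to split the argument into an injectivity part, which is comparatively elementary, and a surjectivity part, which carries the real content. For injectivity I would invoke the Alexander method: a homeomorphism that fixes the isotopy class of every essential curve must be isotopic to the identity, \emph{unless} the surface admits a nontrivial mapping class acting trivially on all isotopy classes of curves. For $S_{g,n}$ with $(g,n)\notin\{(1,2),(2,0)\}$ (and outside the excluded low-complexity surfaces) no such mapping class exists, so the homomorphism is injective; for $S_{2,0}$ and $S_{1,2}$ the hyperelliptic involution is precisely such an element, which accounts for the order-two kernels in statements (2) and (3). Thus the kernel computation reduces to identifying which surfaces carry a mapping class acting trivially on curves, a classical and checkable list.

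The core of the surjectivity argument is to show that any $\phi\in\mathrm{Aut}(C(S))$ \emph{preserves topological type}: it must send a vertex representing a nonseparating curve to another such vertex, a separating curve bounding a genus-$h$ subsurface to another of the same kind, and more generally must respect the homeomorphism type of every configuration of disjoint curves. The key point is that all of this data is encoded \emph{combinatorially} in the complex. A maximal simplex is exactly a pants decomposition, so $\phi$ permutes pants decompositions, and the dimension of the complex together with the dimensions of the links of simplices are invariants that already distinguish many topological types. I would then characterize, purely in terms of the links and the adjacency structure in $C(S)$, when a vertex represents a separating curve, the topological type of the subsurfaces it bounds, and the homeomorphism type of the complementary subsurface of any simplex. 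Since $\phi$ is a simplicial automorphism it must preserve every such combinatorially defined invariant. This is the heart of Ivanov's method, and it requires a delicate case analysis that depends on $g$ and $n$.

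Once $\phi$ is known to preserve topological type, I would construct the realizing homeomorphism concretely. Fixing a pants decomposition $P$, I would use the compatibility of $\phi$ with the elementary moves between pants decompositions (the edges of the Hatcher--Thurston/pants graph, themselves detectable in the combinatorics of $C(S)$) to build a homeomorphism $h$ piece by piece on the pairs of pants and their gluings so that $h$ sends each curve of $P$ to the curve prescribed by $\phi$. One then checks that $h_*$ agrees with $\phi$ on a configuration of vertices large enough to force equality on all of $C(S)$, using that a simplicial automorphism of the curve complex is determined by its action on a suitable finite subconfiguration. This produces the desired preimage in $\Gamma^*(S)$ and yields surjectivity in cases (1) and (2).

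The main obstacle is exactly the combinatorial recognition of topological type in the previous paragraph, and it is here that the exceptional behavior of $S_{1,2}$ emerges. For this surface a combinatorial coincidence makes the link structure fail to distinguish separating from nonseparating curves, so $\mathrm{Aut}(C(S_{1,2}))$ acquires extra automorphisms that interchange the two types; these are precisely the automorphisms \emph{not} lying in the image, which is why the image has index $5$ and consists of those automorphisms preserving the nonseparating vertices. Verifying this demands an explicit analysis of the small complex $C(S_{1,2})$ rather than the general recognition lemmas, and combining it with the order-two kernel coming from the hyperelliptic involution yields the precise statement (3).
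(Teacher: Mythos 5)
The paper does not prove this theorem: it is quoted from Ivanov, Korkmaz and Luo with references, and is used as a black box in the rigidity arguments of \S 3. So there is no internal proof to compare your proposal against; what I can do is measure it against the arguments in the cited sources. Your outline is a faithful reconstruction of that strategy: injectivity via the Alexander method and the classification of mapping classes acting trivially on all curves (which isolates the hyperelliptic involution on $S_{2,0}$ and $S_{1,2}$); surjectivity via showing that a simplicial automorphism preserves topological type of vertices and simplices through combinatorial invariants of links and maximal simplices; and the exceptional behavior of $S_{1,2}$ traced to the failure of the combinatorics to separate the two topological types of curves there. Two remarks. First, the step you describe as building the homeomorphism ``piece by piece on pairs of pants'' is where the published proofs diverge from your sketch: Ivanov and Korkmaz instead characterize, inside $C(S)$, when two curves have geometric intersection number one (or two, in the genus-zero case), reduce by transitivity of $\Gamma^*(S)$ to an automorphism fixing a standard chain or configuration of curves, and argue by induction on complexity; Luo's proof is a different induction using a multiplicative structure on the complex. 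Your pants-decomposition route is plausible but is not what any of the three references does, and the claim that a simplicial automorphism is determined by its values on ``a suitable finite subconfiguration'' is itself a nontrivial lemma you would have to prove. Second, for case (3) the precise index $5$ comes from the explicit isomorphism $C(S_{1,2})\cong C(S_{0,5})$ induced by the hyperelliptic quotient, together with the identification of $\mathrm{Aut}(C(S_{0,5}))$; ``a combinatorial coincidence in the link structure'' is the right intuition but does not by itself yield the index, so this part of your proposal is a statement of what must be verified rather than a verification. As an outline of the known proof the proposal is sound; as a proof it leaves the genuinely hard recognition lemmas unestablished.
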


The result has often been compared to the rigidity result of Tits saying that any simplicial automorphism of an irreducible thick spherical building of rank $\geq 2$ associated to a linear algebraic group $G$ is induced by an automorphism of $G$, see \cite{Tits1974} and the survey \cite{Ji-Handbook}. 

As already mentioned in the introduction, the curve complex was used to obtain results which are analogous to results about lattices in Lie groups which were proved using Tits buildings. It is in this sense that the curve complex plays for the mapping class group the role played by Tits buildings for arithmetic groups. For instance, the virtual cohomological dimension of arithmetic groups was computed by Borel and Serre using Tits buildings \cite{BS1973}; likewise, the virtual cohomological dimension of the mapping class group was computed by Harer using the curve complex \cite{Harer1986}.

 As an application of Theorem \ref{th:IKL}, Ivanov gave in \cite{ivanov2011} a geometric proof of Royden's theorem that we recall below on the rigidity of the action of the mapping class group on Teichm\"uller space  \cite{Royden1971}. The new proof is global and geometric as opposed to the original proof by Royden which is local and analytic. Ivanov's proof is also in the spirit of the proof of the result of Tits which we mentioned about simplicial automorphisms of spherical buildings. 
Theorem \ref{th:IKL} has several other applications, and the proofs of most of the rigidity results that we mention now on mapping class group actions use this theorem.

    The following is a list of known actions of the (extended) mapping class group; most of them induce isomorphisms between this group and automorphism groups of structures involved. They are classified by type:

           \begin{enumerate}

\item Actions of an algebraic nature: The action on homology, the action on the outer automorphism group of the fundamental group of the surface \cite{Nielsen1927} \cite{Baer1928}, the action  by inner automorphisms on the mapping class group itself  \cite{ivanov0} \cite{mcc3}, and 
the action by inner automorphisms on the Torelli group \cite{FI2005} \cite{MV}.

\item \label{22} Actions of a combinatorial nature: The actions on the curve complex \cite{ivanov1997} \cite{korkmaz} \cite{Luo2000}, on the pants decomposition complex \cite{Margalit2004}, 
on the arc complex \cite{IM} \cite{DDD} \cite{Disarlo-r}, 
on the arc and curve complex \cite{KP1},
on the ideal triangulation graph \cite{KP2},
on the Schmutz graph of nonseparating curves and on the systolic curve complex \cite{Schmutz}, 
on the complex of nonseparating curves \cite{irmak2},
on the Hatcher-Thurston complex of cut systems \cite{IK},
on the complex of separating curves  \cite{brendlemargalit} \cite{Kida-Torelli},
on the Torelli complex \cite{FI2005} \cite{brendlemargalit} \cite{Kida-Torelli},
 on the complex of domains \cite{MCP},
on the disc complex and on other complexes of subsurfaces in 3-manifolds \cite{KoS} \cite{CPT2} \cite{CPT2} \cite{CPT2}, and
on the curve complex of a non-orientable surface \cite{AK}.

\item Holomorphic actions: 
  The actions on the universal Teichm\"uller curve (see \cite{T1944} \cite{Grothendieck-CartanI} and the surveys \cite{Teich-Commentary}  and \cite{AJP1} \cite{JP}),
 on spaces of quadratic differentials \cite{Royden1971}  \cite{EK1974} \cite{La1997} \cite{Markovic2003}, and
 on Teichm\"uller spaces  \cite{Royden1971}  \cite{Earle-Kra}  \cite{EK1974} \cite{EG1996} \cite{Markovic2003}.

\item Actions by isometries:
 Isometries of the Teichm\"uller metric \cite{Royden1971}  \cite{EK1974},
of the Weil-Petersson metric \cite{MW2002}, and
    of Thurston's metric \cite{Walsh-Hand}.

\item Actions by projective piecewise-linear transformations: The action on the projectivized space of measured foliations  \cite{Papadopoulos2008}.

\item Actions by automorphisms which preserve intersection functions  \cite{Luo1998} \cite{Luo1998b}.

\item \label{s:ho} Actions by homeomorphisms: Homeomorphisms of the space of unmeasured foliations \cite{Ohshika2012}, of the space of geodesic laminations  \cite{ChPP} and of the reduced Bers boundary \cite{Ohshika2011}.
           
        \item Actions on triangulated Teichm\"uller spaces and on decorated Teichm\"uller spaces \cite{Penner1992}  \cite{Penner1993} \cite{Penner} \cite{Penner2012}.
        
   \end{enumerate}

  Several of the above actions are \emph{mixed} actions in the sense that they belong to several groups at the same time. For instance, the combinatorial actions are also isometric (they preserve the natural combinatorial metrics). Likewise, the actions on Teichm\"uller space are isometric and holomorphic at the same time, and they are rigid in both senses. Note however that although most of the above actions are by homeomorphisms, they are not rigid as such. The only rigid homeomorphic actions that we know are those mentioned in \ref{s:ho}.

An analysis of all these actions is contained in \cite{P-Aarhus}.
 Let us make a few comments on some of them.
   
The action of the mapping class group on the homology $H_1(S,\mathbb{Z})\simeq \mathbb{Z}^{2g}$ is rigid only in the sense that every symplectic automorphism of the homology is induced by an element of the mapping class group. The action on homology has a nontrivial kernel, which is the famous Torelli group, a subject of very active research. One of the most natural questions concerning this group is whether it is finitely generated. This question is open for  a closed surface of genus $\geq 3$ and known to be false for genus $2$.

The mapping class group actions on the combinatorial complexes mentioned in (\ref{22}) have several applications and we already mentioned  that Harer used the action on the arc complex and the curve complexes to obtain his homology stability theorem. Hatcher and Thurston used the action on the Hatcher-Thurston complex to obtain a finite presentation of the mapping class group \cite{HT}.  Harer used this presentation to compute the second rational homology group of the mapping class group. The Borel construction for group actions provides spectral sequences which relate the homology of the mapping class group and the homology of the simplex stabilizers of these actions, and thus it reduces the study of mapping class groups to mapping class groups of simpler surfaces and the results are then obtained inductively.
    
Harer used the nonseparating curve complex in \cite{Harer1983} to prove that if $\Gamma_g$ is the mapping class group of a closed orientable surface of genus $g\geq 3$, we have $H_2(\Gamma_g;\mathbb{Q})\simeq \mathbb{Q}$. This result was improved recently by Putman to include the case of the second rational homology of any finite-index subgroup of the mapping class group, for $g\geq 5$ \cite{Putman2011}.

  The introduction of the systolic curve complex \cite{Schmutz}  was motivated by the need for a better understanding of the structure of the collection of systoles on hyperbolic surfaces. In 1986, Thurston outlined the construction a spine of Teichm\"uller space using systoles \cite{Thurston-spine}.

There are several other actions of interest, namely those related to infinite-type surfaces, non-orientable surfaces, the soleniod, the universal Teichm\"uller spaces, and there is also a profinite theory (the so-called Grothendieck-Teichm\"uller theory). There are actions on representations  of fundamental groups of surfaces into various Lie groups (the so-called higher Teichm\"uller theory), etc. but most of the rigidity results in these settings are still conjectural and there is a lot of work that can be done in this field. We shall dwell on this in \S \ref{s:perspectives} below.

The reader might be interested in the following problems related to the above actions:
\begin{problems}
\item Formulate and prove \emph{local} rigidity results for each of the above problems.
\item Formulate and prove rigidity results for \emph{immersions} (that is, locally injective structure-preserving maps) instead of isomorphisms.
\item Formulate and prove rigidity results for maps between spaces associated to non-homeomorphic surfaces.
\end{problems}

For some of the actions described aboven there is some work already done in this direction. the reader can consult the monograph \cite{P-Aarhus}.

  \section{Actions on foliations and laminations}
In this section, we study the following actions of the mapping class group: 
\begin{enumerate}
\item  The action by projective piecewise-linear homeomorphisms on the space  $\mathcal{PMF}$ of projective equivalence classes of measured foliations. 

 \item The action by homemorphisms on the space $\mathcal{UMF}$ of unmeasured foliations, that is, the quotient of  $\mathcal{PMF}$ by the equivalence relation which identifies two elements whenever they are represented by measured foliations that are topologically the same. 
 
  \item The action by homemorphisms on the reduced Bers boundary of Teichm\"uller space.  This space can be seen as a subspace of unmeasured foliation space, but, as Ohshika showed, the topology is not the induced topology. 
  
 \item The action by homemorphisms on the space $\mathcal{GL}$ of geodesic laminations on a hyperbolic surface. The space is equipped with the geometric topology (also called the Thurston topology). Note that there is no identification between $\mathcal{GL}$ and the space of unmeasured foliations (or laminations) on the surface; the former is strictly is larger.  

  \end{enumerate}
      
We describe in some detail the spaces in the above list and we state the corresponding rigidity results for the mapping class group actions on them. 
   In each case, we shall say a few words on how the spaces appear in different contexts.

    \subsection[Automorphisms of the PPL structure]{Automorphisms of the PPL structure of $\mathcal{PMF}$}\label{sectionPL}
   
In this subsection, we consider the action of the mapping class group on Thurston's sphere $\mathcal{PMF}$ of projective equivalence classes of measured foliations of a surface. This space carries a natural projective piecewise-linear structure. The union of Teichm\"uller space  with this sphere is a beautiful example of a compact space homeomorphic to a closed ball whose interior carries a natural smooth structure (in fact, it carries a natural complex-analytic structure) and whose boundary carries a natural projective piecewise-linear structure which is far from being smooth. It was constructed by Thurston as a natural compactification of Teichm\"uller space, and it plays a fundamental role in the proof he gave of his classification theorem of mapping classes.

   We shall recall the basic elements of Thurston's piecewise-linear (PL) structure of $\mathcal{MF}$ defined by train tracks and the quotient projective piecewise-linear (PPL) structure of $\mathcal{PMF}$.
   
We note right away that the PL atlas defined by train tracks is not a maximal atlas in the sense of  analytic structures; in fact, one can easily see that if we take the \emph{maximal} atlas associated to this PL structure, we get a PL structure whose automorphism group is uncountable, and therefore it cannot be the mapping class group.

We first recall the definition of measured foliations and of the space $\mathcal{MF}$. 
\begin{figure}[!hbp]
 \begin{center}
 \scalebox{0.60}{\includegraphics{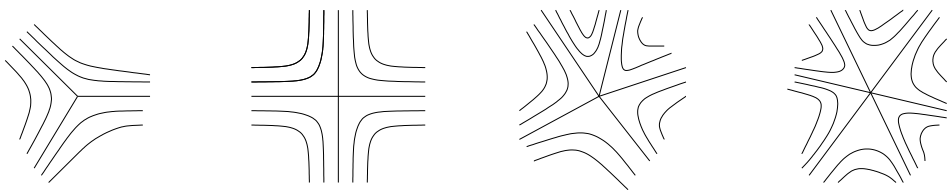}}
 \end{center}
  \caption{Singular points of a measured foliation at an interior point.}
 \label{fig:2_5}\end{figure}
 
We consider foliations on $S$ with singular points  which are of the type given in Figure \ref{fig:2_5}.
 For singular points on the boundary, the local model is such that if we double the surface along its boundary components, we get allowable singularities (as interior singular points). Examples of singular points at the boundary are represented in Figure \ref{boundaryfoliation}.
 \begin{figure}[!hbp]
\centering
\includegraphics[width=.4\linewidth]{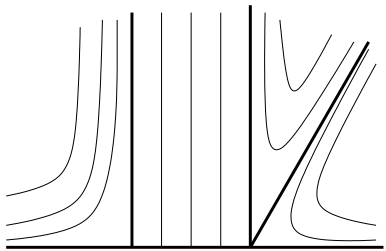}
\caption{\small{Singular points of a measured foliation at a boundary point.}}
\label{boundaryfoliation}
\end{figure}

A {\it transverse measure} for a foliation is a measure on each transverse arc which is equivalent to the Lebesgue measure of a compact interval of $\mathbb{R}$ such that these measures are invariant by the local holonomy maps, that is, the isotopies of arcs that keep each point on the same leaf.

 A {\it Whitehead move} between two measured foliations is the operation of contracting to a point a compact leaf that joins two singular points, or the inverse operation. The move is well-defined up to an ambient isotopy. An example of a Whitehead move is represented in Figure \ref{Whitehead}.
\begin{figure}[!hbp]
 \begin{center}
 \scalebox{0.55}{\includegraphics{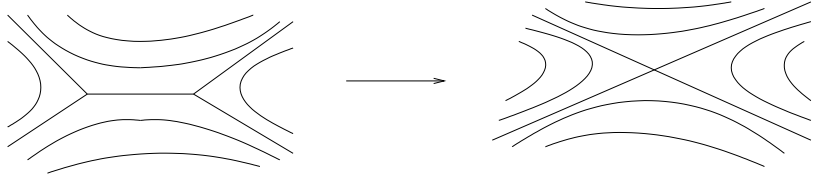}}
 \end{center}
  \caption{Whitehead move: collapsing (or creating) 
   an arc joining two singular points.}
   \label{Whitehead}
 \end{figure}
 
The equivalence relation between measured foliations is generated by ambient isotopy and Whitehead moves that preserve transverse measure.

Given a measured foliations $F$ on $S$, we shall use the notation $[F]$ for its equivalence class in $\mathcal{MF}$, and sometimes also $F$ (without the brackets) when no confusion is possible.

The space of equivalence classes of measured foliations is called {\it measured foliation space}, and it is denoted by 
  $\mathcal{MF}$ or $\mathcal{MF}(S)$. It is equipped with a topology which is defined via its embedding in the space $\mathbb{R}^{\mathcal{S}}_+$ of nonnegative functions on the set  $\mathcal{S}$ of isotopy classes of essential simple closed curves on $S$. This is done using the geometric intersection function which we recall now.
     
Given a measured foliation $F$ on $S$ and an element $\gamma$ in $\mathcal{S}$,  $i(F,\gamma)$ is the infimum of the total measure of $c$ over all closed curves $c$ in the homotopy class $\gamma$ which are concatenations of arcs transverse to $F$ and arcs contained in the leaves of $F$. The total measure of $c$ is the sum of the measures of the subarcs transverse to $F$.

The value $i(F,\gamma)$ does not depend on the choice of the equivalence class of $F$. We obtain in this way  a map 
\[i(.,.) : \mathcal{MF}\times \mathcal{S}\to \mathbb{R}_+
\]
called the {\it geometric intersection function}, or, in short, \emph{intersection function}.\index{intersection function}

Using this function, we obtain a natural map 
\[ F\mapsto  i(F,.)
\]from $\mathcal{MF}$ to $\mathbb{R}_+^{\mathcal{S}}$.
This map is injective, and $\mathcal{MF}$ inherits a topology from this injection, the space $\mathbb{R}_+^{\mathcal{S}}$ being equipped with the product topology. Equipped with this topology, $\mathcal{MF}$ is homeomorphic to $\mathbb{R}^{6g-6}\setminus\{0\}$.  All these results are due to Thurston \cite{Thurston1988} and they are proved in \cite{FLP}. It is sometimes practical to consider the empty foliation as an element of $\mathcal{MF}$ and in this case the space becomes homomorphic to $\mathbb{R}^{6g-6}$.

There is a natural action of the group of positive reals $\mathbb{R}_+^*$ on $\mathcal{MF}$ obtained from the action of $\mathbb{R}_+^*$ on measured foliations defined by multiplying the transverse measure by a constant positive factor. The quotient space of $\mathcal{MF}$ by this action is the \emph{projective foliation space}, denoted by $\mathcal{PMF}$.

From the embedding of  $\mathcal{MF}$ into the space $\mathbb{R}_+^{\mathcal{S}}$ we obtain an embedding of $\mathcal{PMF}$ in the projective space $\mathbb{P}\mathbb{R}_+^{\mathcal{S}}$.

It will be convenient in this subsection and in the next one to represent an element of $\mathcal{MF}$ by a {\it partial} measured foliation. This is a measured foliation whose support, $\mathrm{Supp}(F)$,  is a nonempty (and not necessarily connected) subsurface $S'$ with boundary of $S$ such that the boundary curves of $S'$ are all essential in $S$. The partial foliation may have singularities at the boundary of $S'$, but in the complement of these singular points the foliation is tangent to the boundary of $S'$. Any partial measured foliation on $S$ gives a well-defined element of $\mathcal{MF}$, which is the equivalence class of the measured foliation obtained by collapsing each non-foliated region onto a spine, cf. \cite{FLP}.

We now recall the theory of train tracks which is at the basis of the piecewise-linear theory of $\mathcal{MF}$. 

A {\it train track} $\tau$ on $S$ is a graph embedded in $S$ whose vertices we shall suppose trivalent (although this is not necessary) and such that at any vertex, the three half-edges that abut have a well-defined tangent at that point; that is, we have a notion of two half-edges abutting  from one side and of the remaining half-edge abutting from the other side. (In fact, the use of the word ``tangent" here can be somehow misleading, since no smooth structure on $S$ is needed in the theory. Instead, one can simply say that there is a well-defined notion of ``two half-edges abutting from one side" and ``one half-edge abutting from the other side" at each vertex. Equivalently, we just single out one of the three half-edges abutting at each vertex.) 

 The local structure at a vertex is represented in Figure \ref{fig:switch}.

A vertex of $\tau$ is also called a {\it switch}. We shall call a {\it corner} of $S$ a region in a neighborhood of a switch which is contained between the two half-edges that abut from the same side.

   \begin{figure}[!hbp]
 \begin{center}
\scalebox{0.6}{\includegraphics{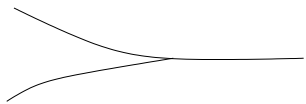}}
\end{center}
\caption{The local model at a switch.}
\label{fig:switch}
\end{figure}

An \emph{edge} of a train track is a connected component of the complement of the set of switches. We note that a train track may have no vertices at all, i.e. it could consist of a union of disjoint simple closed curves; in that case we shall also call such a closed curve an \emph{edge} of the train track.

On a surface with boundary, a train track may be either tangent or transverse to the boundary.

On closed surfaces, all the train tracks $\tau$  that we consider satisfy the following condition:  no  connected component of $S\setminus\tau$ is a disc with $0$, $1$ or $2$ corners or an annulus with no corner (see Figure \ref{excluded}). We shall call such a train track \emph{admissible} whenever this property needs to be referred to (although all our train tracks will be admissible).
\begin{figure}[!hbp]
 \begin{center}
\scalebox{0.5}{\includegraphics{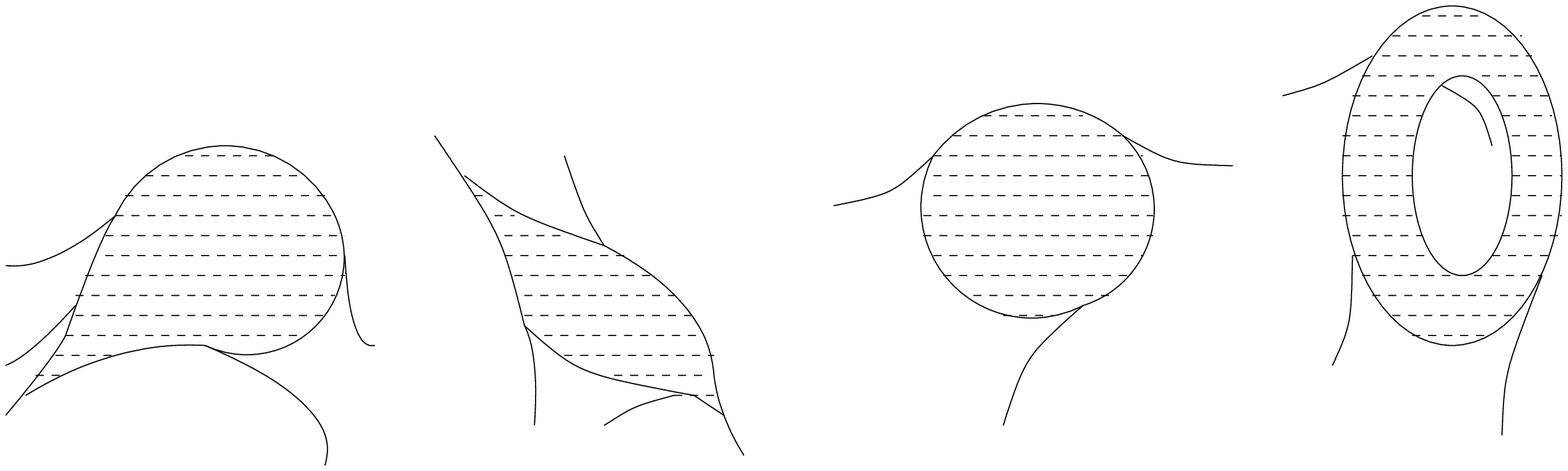}}
\end{center}
\caption{Excluded complementary components, for the complement of a train track.}
\label{excluded}
\end{figure}
   For surfaces with boundary, the excluded complementary components are those such that they give rise to excluded complementary components on the closed surfaces obtained by doubling the surfaces with boundary.

Train track theory works for general surfaces of finite type, with punctures and/or boundary components, but in order to avoid several technicalities, we shall deal from now on with the case where $S$ is a closed surface.

A train track $\tau$ is said to be {\it maximal} if every component of $S\setminus \tau$ is a disc with three corners on its boundary. 

 There are three basic moves that can be performed on a train track and they are represented in Figure \ref{shift-split}. They are called a \emph{shift}, a \emph{right split} and a \emph{left split}. These three moves will be essential in what follows.
   \begin{figure}[!hbp]
 \begin{center}
\scalebox{0.5}{\includegraphics{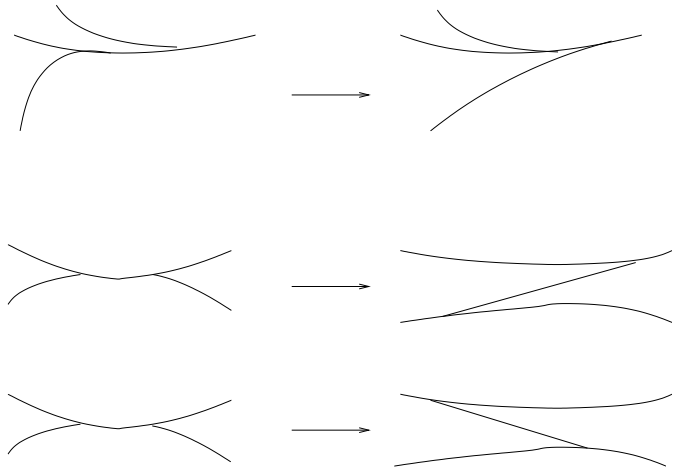}}
\end{center}
\caption{From top to bottom, the three elementary moves on a train track: a shift, a left split and a right split.}
\label{shift-split}
\end{figure}

 Any train track $\tau$ has a {\it regular neighborhood} $N(\tau)$ foliated by arcs that are called the {\it ties}. The local picture of the foliation by the ties near a switch is represented in Figure \ref{fig:ties}. The regular neighborhood $N(\tau)$, equipped with its foliation by the ties, is well-defined up to isotopy, and there is a natural projection $N(\tau)\searrow \tau$ from $N(\tau)$ to a train track (isotopic to) $\tau$ defined by collapsing each tie to a point.
  \begin{figure}[!hbp]
 \begin{center}
\scalebox{0.5}{\includegraphics{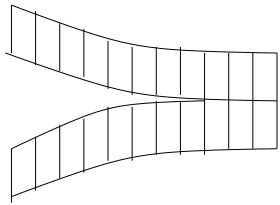}}
\end{center}
\caption{The regular neighborhood of a train track and the local structure of the ties near a switch.}
\label{fig:ties}
\end{figure}

Let $\tau$ be a train track on $S$ and  let $e_1,\ldots,e_N$ be its edges. Let $\mathbb{R}^N$ denote the real vector space with basis $\{e_1,\ldots,e_N\}$ and let $(x_1,\ldots,x_N)$ denote the coordinates of a point in that space. We let $V_{\tau}\subset \mathbb{R}^N$ be the closed convex cone in $\mathbb{R}^N$ defined by the system of equations
  \[\displaystyle   
  \begin{cases}
x_i\geq 0 & \text{ for every\ } i=1,\ldots, N\\
x_i=x_j+x_k  &\text{ for every switch of\  } \tau
 \end{cases}
\]
where, in the last equation, $x_j$ and $x_k$ denote the coordinates at the two edges that abut from the same side at the given switch, and $x_i$  the coordinate on the edge that abuts from the other side.
 
 The coordinates $x_i$ on the edges are also called the \emph{weights} of the edge.
 
 A train track $\tau$ is said to be {\it recurrent} if there exists an element $(x_1,\ldots,x_N)$ of $V_{\tau}$ satisfying $x_i>0$ for all $i=1,\ldots,N$.

For any train track $\tau$ on $S$, there is a map 
\begin{equation}\label{eq:var}
\varphi_{\tau}:V_{\tau}\to\mathcal{MF}
\end{equation}
 defined as follows:

 Consider the regular neighborhood $N(\tau)$ of $\tau$ equipped with its projection $N(\tau)\searrow \tau$ and let $(x_1,\ldots,x_N)$ be a nonzero element of $V_{\tau}$. For each nonzero coordinate $x_i$ ($i\in \{1,\ldots,N\}$), consider the inverse image of the edge $e_i$ by the projection  $N(\tau)\searrow \tau$. The closure of the interior of this inverse image has a natural structure of a rectangle equipped with a foliation induced by the ties, which we call the \emph{vertical} foliation. We equip this rectangle with another foliation, which we  call the \emph{horizontal} foliation, whose leaves are segments which are transverse to the ties and which join the two vertical edges of the rectangle,  that is, the two edges that consist of ties. We equip this horizontal foliation with a transverse measure whose total mass is equal to $x_i$.  
 
 Finally, we glue the various foliated rectangles so obtained along their vertical sides, by using measure-preserving homeomorphisms. We obtain a partial measured foliation on $S$, which gives a well-defined element of $\mathcal{MF}$. 
The zero element of $V_{\tau}$ is sent to the empty foliation of $\mathcal{MF}$. 
This defines the map $\varphi_{\tau}:V_{\tau}\to\mathcal{MF}$.

 The notion of train track was introduced by Thurston in \cite{Thurston1979}. A standard reference on train tracks is the monograph \cite{PH} by Penner and Harer. 
The following result is outlined by Thurston in his Notes \cite{Thurston1979}. A proof is contained in \cite{P1} p. 20 and ff.
 
\begin{theorem}\label{homeoF}
The map $\varphi_{\tau}$ is a homeomorphism onto its image. In the case where $\tau$ is maximal and recurrent,  the image $\varphi_{\tau}(V_{\tau})$ has nonempty interior in $\mathcal{MF}$.
\end{theorem}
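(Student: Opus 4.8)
The plan is to exploit the fact, recalled above, that the topology of $\mathcal{MF}$ is the one induced by the embedding $F\mapsto i(F,\cdot)$ into $\mathbb{R}_+^{\mathcal{S}}$ with the product topology; thus every topological question about $\varphi_\tau$ reduces to understanding the functions $x\mapsto i(\varphi_\tau(x),\gamma)$ for $\gamma\in\mathcal{S}$. First I would establish \emph{continuity} of $\varphi_\tau$. Given $\gamma\in\mathcal{S}$, one isotopes $\gamma$ into \emph{efficient position} with respect to $\tau$, meaning that $\gamma$ meets the regular neighborhood $N(\tau)$ in arcs each of which crosses the ties transversally and runs monotonically through a chain of rectangles, with no arc cutting off a disc or a bigon against a tie. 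In such a position the total transverse mass of $\gamma$ for the partial foliation $\varphi_\tau(x)$ is a finite sum of the weights $x_i$ over the edges crossed, and $i(\varphi_\tau(x),\gamma)$ is the minimum of finitely many such sums taken over the efficient representatives. Hence $x\mapsto i(\varphi_\tau(x),\gamma)$ is continuous on $V_\tau$ (indeed piecewise-linear and positively homogeneous), and $\varphi_\tau$ is continuous.

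For the assertion that $\varphi_\tau$ is a homeomorphism onto its image, the cleanest route is to produce a continuous left inverse. Recall that the weight $x_i$ is exactly the transverse measure that $\varphi_\tau(x)$ assigns to a tie crossing the edge $e_i$. I would therefore select, for the edges $e_1,\dots,e_N$, finitely many curves $\gamma_1,\dots,\gamma_m\in\mathcal{S}$ crossing $N(\tau)$ transversally to the ties, each running efficiently through a short prescribed chain of rectangles, so chosen that on $V_\tau$ each number $i(\varphi_\tau(x),\gamma_j)$ is an honest linear function of $x$ (no minimum being needed) and that the resulting linear map $x\mapsto\bigl(i(\varphi_\tau(x),\gamma_j)\bigr)_{j}$ admits a linear left inverse $L$. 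Setting $\psi=L\circ\bigl(i(\cdot,\gamma_1),\dots,i(\cdot,\gamma_m)\bigr)$ produces a map $\mathcal{MF}\to\mathbb{R}^N$ which is continuous (each coordinate $i(\cdot,\gamma_j)$ is continuous by the very definition of the topology of $\mathcal{MF}$) and satisfies $\psi\circ\varphi_\tau=\mathrm{id}_{V_\tau}$. Consequently $\varphi_\tau$ is injective and $\varphi_\tau^{-1}=\psi|_{\varphi_\tau(V_\tau)}$ is continuous, so $\varphi_\tau$ is a homeomorphism onto its image.

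It remains to treat the second assertion. When $\tau$ is maximal, every complementary region of $\tau$ in the closed genus $g$ surface $S$ is a trigon; counting one corner per trivalent switch gives $3V=2E$ (trivalence) and $V=3F$ (three corners per trigon), and substituting into the Euler relation $\chi(S)=V-E+F=2-2g$ yields $V=12g-12$ and $E=18g-18$, so the switch equations form a system in $\mathbb{R}^{E}$ whose solution space has dimension at least $E-V=6g-6$. Recurrence furnishes a weight system with all $x_i>0$, so $V_\tau$ is full-dimensional in that solution space; since the embedding of $V_\tau$ into the $(6g-6)$-dimensional space $\mathcal{MF}$ forces its dimension to be at most $6g-6$, we conclude that $\dim V_\tau=6g-6$ and that $V_\tau$ contains an open subset $U$ of a Euclidean space of dimension $6g-6$. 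Because $\varphi_\tau$ is a homeomorphism onto its image and, by Thurston's theorem recalled above, $\mathcal{MF}$ is a manifold homeomorphic to $\mathbb{R}^{6g-6}\setminus\{0\}$, invariance of domain applied to the continuous injection $\varphi_\tau|_U\colon U\to\mathcal{MF}$ shows that $\varphi_\tau(U)$ is open in $\mathcal{MF}$. Therefore $\varphi_\tau(V_\tau)$ has nonempty interior.

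The main obstacle is the combinatorial calculus underlying the first two paragraphs: putting an arbitrary essential simple closed curve into efficient position with respect to $\tau$ and checking that its transverse mass is then read off linearly from the weights, and, for the left inverse, exhibiting detecting curves whose intersection numbers recover the weights linearly and invertibly. Once this ``efficient position'' analysis is in place, continuity, injectivity and continuity of the inverse all follow formally, and the interior statement reduces to the dimension count above combined with invariance of domain.
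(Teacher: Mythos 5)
The paper does not actually prove Theorem \ref{homeoF}: it only records that the result is outlined in Thurston's notes and proved in \cite{P1}, so your proposal has to be judged against the standard argument, which is indeed the one you outline (read off intersection numbers from efficient position, recover the weights from intersection numbers with detecting curves, then a dimension count plus invariance of domain for the interior statement). Your continuity argument and the treatment of the second assertion are fine; in particular the Euler characteristic count $V=12g-12$, $E=18g-18$, $E-V=6g-6$, the use of recurrence to get a relatively open full-dimensional piece of the solution cone, and the appeal to invariance of domain inside $\mathcal{MF}\simeq\mathbb{R}^{6g-6}\setminus\{0\}$ are all correct.

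The genuine gap is in the second paragraph, at the existence of the detecting curves $\gamma_1,\dots,\gamma_m$ whose intersection numbers are globally \emph{linear} in the weights and whose associated linear map admits a left inverse. This is not a routine verification to be deferred: it is precisely the point where the standing hypotheses on $\tau$ (no complementary nullgons, monogons, bigons, or cornerless annuli --- and, in the usual treatments such as Penner--Harer, transverse recurrence) must be used, and your argument never invokes them. As written, your proof would apply verbatim to a ``train track'' consisting of two disjoint parallel copies of a curve $c$ (complementary cornerless annulus), where $\varphi$ sends $(x_1,x_2)$ to the class of $(x_1+x_2)\,c$ and is manifestly not injective: every simple closed curve meets the two copies the same number of times, so every intersection function is a multiple of $x_1+x_2$ and no left inverse exists. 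A similar collapse occurs across a complementary bigon. So the heart of the theorem is showing that the exclusion of these complementary regions lets you build, for each edge $e_i$, a curve (or combination of curves) whose intersection number isolates $x_i$; you have correctly identified this as ``the main obstacle'' but have given no construction and no indication of where the hypotheses enter, which means the actual mathematical content of the injectivity statement is missing. Everything downstream of that step (continuity of $\psi$, the homeomorphism conclusion, and the interior statement) is formally correct once it is supplied.
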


A measured foliation $F$ (or its equivalence class $[F] \in\mathcal{MF}$) is said to be {\it carried by} a train track $\tau$ if $[F]$ is in the image set of $V_{\tau}$ by the map  $\varphi_{\tau}$ in (\ref{eq:var}).

  We need to recall the definition of a PL map.

Let $M$ be a positive integer. A \emph{(closed) linear half-space}\index{linear half space} in $\mathbb{R}^M$ is (the closure of) a connected component of the complement of a hyperplane. We shall say that a subset $V$ of $\mathbb{R}^M$ is a {\it linear polytope}  if it is the intersection of a finite number of closed linear half-spaces of $\mathbb{R}^M$. 
Let us stress on the fact that we are talking about linear and not only affine half-spaces, so that a linear polytope $V$ is $\mathbb{R}_+^*$-homogeneous (invariant by multiplication by elements of $\mathbb{R}_+^*$). In particular, it is noncompact (unless it is empty). All the polytopes we consider are linear. With this in mind, we shall henceforth use the name {\it polytope}\index{polytope} to denote a linear polytope.

The dimension of a polytope $V\subset \mathbb{R}^M$ is the dimension of the smallest (in the sense of inclusion) linear subspace of $\mathbb{R}^M$ that contains it.
It is also the smallest dimension of a vector space in which $V$ embeds.

Note that a polytope is closed and convex.

The {\it relative interior}\index{relative interior}\index{polytope!relative interior} of a polytope $V$ is the topological interior of $V\cap A$ in $A$, with $A$ being the smallest linear subspace of $\mathbb{R}^M$ containing $V$.

Let $M$ and $N$ be two positive integers, let $V\subset \mathbb{R}^M$ be a finite union of polytopes of the same dimension, $V_1,\ldots, V_n\subset \mathbb{R}^M$, and let $f:V\to \mathbb{R}^N$ be a map.
We shall say that $f$ is PL\index{PL map} relative to the decomposition $V=V_1\cup \ldots\cup V_n$
 if $f$ is continuous and if the restriction of $f$ to the relative interior of each of the polytopes $V_1,\ldots, V_n$ is the restriction of a linear map from $\mathbb{R}^M$ to $\mathbb{R}^N$.

Given a subset $V$ of $\mathbb{R}^M$, we shall say that a map $f:V\to \mathbb{R}^N$ is piecewise-linear (PL) if it is PL relatively to some decomposition of $V$ as a union of polytopes $V_1,\ldots, V_n$.

 For each maximal recurrent train track $\tau$, we let $U_\tau=\varphi_\tau (V_\tau) \subset \mathcal{MF}$ be the image of the associated map $\varphi_{\tau}:V_{\tau}\to\mathcal{MF}$ and we let $\psi_{\tau}:U_{\tau}\to V_{\tau}$ denote the
  inverse of the homeomorphism $\varphi_{\tau}:V_{\tau}\to \varphi(V_{\tau})=U_{\tau}$ of Theorem \ref{homeoF}.
  
Let $\mathcal{A}$ be the following collection of maps:
 $$\mathcal{A}=\{(U_{\tau},\psi_{\tau})\ \vert \tau \text{ is a maximal recurrent train track }\}.$$

\begin{theorem}[Thurston] \label{th:Th-PL} The set $\mathcal{A}$ is an atlas of a PL structure on $\mathcal{MF}$. 
\end{theorem}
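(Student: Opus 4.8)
```latex
\textbf{Proof proposal.}
The plan is to verify the two defining properties of a PL atlas: that the charts $(U_\tau,\psi_\tau)$ together \emph{cover} $\mathcal{MF}$, and that the \emph{transition maps} $\psi_{\tau'}\circ\psi_\tau^{-1}$ are PL homeomorphisms on their overlaps. Each $\psi_\tau$ is already a homeomorphism onto the polytope $V_\tau$ by Theorem \ref{homeoF}, and $V_\tau$ sits in the finite-dimensional space $\mathbb{R}^N$ cut out by the switch equations, so every chart is genuinely a homeomorphism onto a subset of Euclidean space of the right dimension. What remains is to check these two compatibility conditions against the definition of PL map given just above.

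First I would establish \emph{coverage}. The point is that every measured foliation is carried by some maximal recurrent train track. Given $[F]\in\mathcal{MF}$, one realizes $F$ by a partial measured foliation and chooses a train track $\tau_0$ carrying it by collapsing a foliated neighborhood via a tie-projection; one then enlarges $\tau_0$ to a maximal train track by adding edges to subdivide the complementary regions into triangles, and shows that recurrence can be arranged (the weights coming from $F$ are positive on the relevant edges, and the added edges can be given positive weights without violating the switch conditions). Hence $[F]\in U_\tau=\varphi_\tau(V_\tau)$ for this maximal recurrent $\tau$, so $\mathcal{A}$ covers $\mathcal{MF}$. By the second sentence of Theorem \ref{homeoF}, each such $U_\tau$ moreover has nonempty interior, which is what makes these charts top-dimensional and the atlas sensible.

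The main work, and the main obstacle, is \emph{transition maps}. On the overlap $U_\tau\cap U_{\tau'}$ one must show that $\psi_{\tau'}\circ\varphi_\tau:\psi_\tau(U_\tau\cap U_{\tau'})\to V_{\tau'}$ is PL in the sense defined above, i.e. continuous and linear on the relative interior of each polytope of a finite subdivision. The key geometric input is that two maximal recurrent train tracks carrying a common foliation are related by a finite sequence of the three elementary moves of Figure \ref{shift-split} (a shift, a left split, a right split). I would therefore prove the statement \emph{one elementary move at a time}: for a single split or shift relating $\tau$ to an adjacent $\tau'$, write the weights on $\tau'$ in terms of the weights on $\tau$. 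A split replaces one edge by a branching governed by an inequality of the form $x_j \geq x_k$ versus $x_j \leq x_k$ among incident weights; on each side of this inequality the new weights are given by \emph{linear} formulas (subtraction and addition of old weights), and the two linear formulas agree where $x_j=x_k$, so the transition is continuous and piecewise-linear with pieces cut out by linear hyperplanes — exactly a PL map in the stated sense. A shift is even simpler, being a linear change of the weight coordinates on the nose. Composing finitely many such maps, and noting that a composite of PL maps is PL (refining subdivisions when composing), gives that every transition map is PL, and its inverse is PL by symmetry.

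The delicate point to get right is the \emph{case analysis at a split}: one must confirm that the two linear expressions for the new weights really are restrictions of linear maps $\mathbb{R}^N\to\mathbb{R}^{N'}$ on the respective half-spaces $\{x_j\geq x_k\}$ and $\{x_j\leq x_k\}$, that these half-spaces (intersected with $V_\tau$) are the polytopes of the required finite decomposition, and that the expressions match along $\{x_j=x_k\}$ so the assembled map is continuous. Because all the switch equations and the splitting inequalities are \emph{homogeneous} linear, every piece is a linear polytope in the precise $\mathbb{R}_+^*$-homogeneous sense fixed above, so the construction stays within the defined category and never produces a merely affine piece. Once the single-move computation is carried out, finiteness of the connecting sequence of moves closes the argument, and the two conditions — coverage and PL compatibility — together say precisely that $\mathcal{A}$ is a PL atlas.
```
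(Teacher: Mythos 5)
Your overall strategy --- coverage by the charts plus analysis of the transition maps one elementary move at a time --- is the same as the paper's, which itself only sketches the argument and defers the details to \cite{P1}, Chapter 1; the single-split computation you describe (two linear formulas on the half-cones $\{x_j\geq x_k\}$ and $\{x_j\leq x_k\}$, agreeing on the wall $\{x_j=x_k\}$) is exactly the mechanism packaged in Proposition \ref{prop:change}. However, the combinatorial lemma you invoke is not stated correctly, and as written it would not carry the argument. It is not true in general that two maximal recurrent train tracks carrying a common foliation are related by a finite sequence of shifts and splits. The correct statement, and the one the paper encodes, is asymmetric: one splits and shifts $\tau$ along the given foliation $F$ finitely many times to obtain a track $\tau_i$ that is \emph{carried} by $\sigma$, and the transition map near $F$ is then the composition of the inverse of the linear carrying map $V_{\tau_i}\to V_{\tau}$ with the linear carrying map $V_{\tau_i}\to V_{\sigma}$. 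This is precisely why condition (\ref{it1}) of Proposition \ref{prop:change} requires $\tau_i\prec\sigma$ rather than requiring $\sigma$ itself to appear at the end of a chain of moves, and why item (\ref{ccc}) describes the underlying linear maps as those induced by the two carryings and their inverses. Your local computation at a single split is still the engine, but it must be assembled as ``split $\tau$ until the result is carried by $\sigma$,'' not as ``connect $\tau$ to $\sigma$ by moves.''

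A second point you pass over too quickly: the sequence of moves depends on the foliation, so your argument --- like Proposition \ref{prop:change} --- is local, producing a polytope decomposition only of a neighborhood $N(F)$ of each point of the overlap. The definition of a PL map adopted in the paper asks for linearity relative to a single \emph{finite} decomposition of $\psi_{\tau}(U_{\tau}\cap U_{\sigma})$ into polytopes, so one still needs a finiteness argument (for instance, homogeneity of the cones together with compactness after projectivizing, or a combinatorial bound on the splitting sequences required). This step is carried out in \cite{P1} but does not follow merely from the observation that each individual foliation requires only finitely many moves.
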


The meaning of this statement is that the coordinate changes of this atlas are PL.

 We shall prove a rigidity theorem (Theorem  \ref{th:autPL} below) for the automorphism group of this structure, which we call the \emph{train track PL structure of $\mathcal{MF}$}. The proof of this theorem uses a precise description of the coordinate changes $\psi_{\tau^{-1}\sigma}=\psi_{\sigma}\circ\psi_{\tau}^{-1}$ of the atlas $\mathcal{A}$,  each such coordinate change map being defined on an appropriate subset of $V_{\tau}$. We shall recall here some details given in Chapter 1 of \cite{P1}, because we shall use them in the proof of Theorem  \ref{th:autPL}.

 If $\tau$ and $\sigma$ are two train tracks on $S$, we say that $\tau$ is {\it carried}by $\sigma$, and we write this relation as $\tau\prec\sigma$, if $\tau$ is isotopic to a train track $\tau'$ which is contained in a regular neighborhood $N(\sigma)$ of $\sigma$ and which is transverse to the ties of $N(\sigma)$. When $\tau\prec\sigma$, there is a natural linear map from the closed convex cone $V_{\tau}$ to the closed convex cone $V_{\sigma}$ which induces the inclusion map at the level of the two subspaces $\varphi_{\tau}(V_{\tau})$ and $\varphi_{\sigma}(V_{\sigma})$ of $\mathcal{MF}$. The map $V_{\tau}\to V_{\sigma}$ is defined by associating to each weight system on the edges of $\tau$ the weigh system on the edges of $\sigma$ where the weight on an edge $e$ is defined as the sum of the edges of $\tau$ (counted with multiplicity) that traverse a tie which is an inverse image of $e$ with respect to the map $N(\tau)\searrow \tau$. (The result is independent of the choice of the tie).

 The following definition and proposition are needed in the proof of Theorem  \ref{th:autPL}.

\begin{definition}
 Let $\tau$ be a maximal recurrent train track on $S$ and let $T=\{\tau_1,\ldots,\tau_n\}$ be a family of maximal recurrent train tracks on $S$.
 We say that the family $T$ is
{\it adapted to $\tau$} if the following properties hold:   
 
 \begin{enumerate}

 \item \label{a1} For each $i=1,\ldots,n$, $\tau_i \prec\tau$.
\item   \label{a4} For every $i$ and $j$ satisfying $1\leq i<j\leq n$, the interiors of $U_{\tau_{i}}$ and  $U_{\tau_{j}}$ are disjoint. 
 \item $\displaystyle \cup_{i=1}^n U_{\tau_{i}}=U_\tau$

 \end{enumerate} 
\end{definition}
 \begin{proposition}\label{prop:change}
 Let $(U_{\tau},\psi_{\tau})$ and $(U_{\sigma},\psi_{\sigma})$ be two coordinate charts in $\mathcal{A}$ 
 and let $\psi_{\tau^{-1}\sigma}$ be the corresponding coordinate change, 
defined on the subset $\psi_{\tau}(U_{\tau}\cap U_{\sigma})$ of $V_{\tau}$. For every point $F$ in the interior of $\psi_{\tau}(U_{\tau}\cap U_{\sigma})$, we can find a family  of train tracks $T=\{\tau_1,\ldots,\tau_n\}$ which is adapted to $\tau$ and which furthermore satisfies the following properties:  
 \begin{enumerate}

 \item \label{it1} $\tau_i \prec\sigma \text{ for all\  } i=1,\ldots,n$.
 
 \item The union  $\bigcup_{i=1}^n U_{\tau_{i}}$ is a neighborhood $N(F)$ of $F$ in $\mathcal{MF}$, and 
 $F$ belongs to the set $U_{\tau_{i}}$ for all $i=1,\ldots,n$.
\item \label{ccc} The linear maps that underlie the PL coordinate-change map $\psi_{\tau^{-1}\sigma}$ are induced by the linear maps defined by the relations $\tau_i\prec\tau$ and $\tau_i\prec\sigma$, and the inverses of such maps.

 \end{enumerate}
  \end{proposition}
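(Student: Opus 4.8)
The plan is to build the family $T$ by the splitting calculus for train tracks, exploiting the hypothesis that $F$ lies in the relative interiors of both carrying cones. First I would unwind the statement. Since $F$ is in the interior of $\psi_{\tau}(U_{\tau}\cap U_{\sigma})$, the foliation $\varphi_{\tau}(F)$ lies in the interior of both $U_{\tau}$ and $U_{\sigma}$; by invariance of domain and the homeomorphism of Theorem \ref{homeoF} this means that $F$ assigns strictly positive weights to every edge of $\tau$ and, through $\psi_{\sigma}$, to every edge of $\sigma$, so $F$ is \emph{fully carried} by each of the two train tracks. Writing the coordinate change as $\psi_{\tau^{-1}\sigma}=\psi_{\sigma}\circ\varphi_{\tau}$, the content of (\ref{ccc}) is that near $F$ this map is a finite union of compositions $B_i\circ A_i^{-1}$, where $A_i\colon V_{\tau_i}\to V_{\tau}$ and $B_i\colon V_{\tau_i}\to V_{\sigma}$ are the linear carrying maps attached to $\tau_i\prec\tau$ and $\tau_i\prec\sigma$. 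Hence it suffices to produce the $\tau_i$ with the stated carrying and covering properties; the linearity statement will then be formal.

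Next I would recall the elementary moves of Figure \ref{shift-split}. Splitting a maximal recurrent train track $\mu$ at a large branch yields the two maximal recurrent train tracks $\mu^{L}\prec\mu$ and $\mu^{R}\prec\mu$ obtained by the left and right splits, and the cones satisfy $U_{\mu^{L}}\cup U_{\mu^{R}}=U_{\mu}$ with disjoint interiors, the common wall $U_{\mu^{L}}\cap U_{\mu^{R}}$ being the locus where the two weights at that branch coincide. Iterating such splits therefore produces at each stage a family adapted to $\tau$ in the sense of the definition above, and the idea is to iterate only those splits that keep $F$ on the splitting wall, i.e. to split $\tau$ \emph{along} $F$. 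This produces a nested system of subcones all containing the ray of $F$ and shrinking to it; the finitely many maximal recurrent train tracks $\tau_1,\dots,\tau_n$ arising from the finitely many left/right choices at the branches where $F$ is degenerate then all satisfy $\tau_i\prec\tau$, all contain $F$ (which lies on each splitting wall), and their cones tile a neighborhood $N(F)$ of $F$, giving (\ref{a1}), (\ref{a4}) and property (2).

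The heart of the argument, and the step I expect to be the main obstacle, is to show that after enough splitting along $F$ one has $\tau_i\prec\sigma$ for every $i$, which is property (\ref{it1}). Here I would use that $F$ is fully carried by $\sigma$: the foliation $\varphi_{\tau}(F)$ lies in the interior of $U_{\sigma}$, so a whole neighborhood $W$ of it is contained in $U_{\sigma}$, and since the tiles $U_{\tau_i}$ produced by splitting along $F$ shrink into any prescribed neighborhood of $F$, for sufficiently deep splitting we obtain $U_{\tau_i}\subset W\subset U_{\sigma}$. The delicate point is that mere inclusion of cones does not by itself yield a carrying relation $\tau_i\prec\sigma$; to upgrade it I would split $\sigma$ along $F$ as well and invoke that the splitting sequences of $\tau$ and of $\sigma$ directed by the common fully carried foliation $F$ are cofinal, so that the fine tracks obtained from $\tau$ are carried by the fine tracks obtained from $\sigma$, hence by $\sigma$ itself. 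This comparison of splitting sequences following a common foliation, together with the fact that splitting preserves maximality and recurrence, is where the real work lies; the relevant statements belong to the train track calculus of \cite{P1} and \cite{PH}.

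Finally I would verify (\ref{ccc}). On the subcone $A_i(V_{\tau_i})=\psi_{\tau}(U_{\tau_i})$ of $V_{\tau}$ the relation $\tau_i\prec\tau$ gives $\varphi_{\tau}\circ A_i=\varphi_{\tau_i}$, while $\tau_i\prec\sigma$ gives $\varphi_{\sigma}\circ B_i=\varphi_{\tau_i}$, whence $\psi_{\sigma}\circ\varphi_{\tau_i}=B_i$. Composing, for $x\in A_i(V_{\tau_i})$ one finds
\[
\psi_{\tau^{-1}\sigma}(x)=\psi_{\sigma}\bigl(\varphi_{\tau}(x)\bigr)=\psi_{\sigma}\bigl(\varphi_{\tau_i}(A_i^{-1}x)\bigr)=B_i\bigl(A_i^{-1}x\bigr),
\]
so the coordinate change restricted to this piece is the linear map $B_i\circ A_i^{-1}$, exactly as asserted. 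Since the pieces $A_i(V_{\tau_i})$ cover $\psi_{\tau}(N(F))$ and the $\tau_i$ form a family adapted to $\tau$, this exhibits $\psi_{\tau^{-1}\sigma}$ near $F$ as a PL map whose linear pieces are induced by the carrying maps of $\tau_i\prec\tau$ and $\tau_i\prec\sigma$ and their inverses, completing the plan.
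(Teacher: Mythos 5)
Your proposal follows essentially the same route as the paper, which itself only sketches the argument and refers to Chapter 1 of \cite{P1} for full details: the subdivision of $U_\tau$ is produced by repeated application of the elementary moves (shifts and splits) performed along $F$, the carrying relations $\tau_i\prec\tau$ and $\tau_i\prec\sigma$ come from the splitting calculus, and the linearity of the pieces of $\psi_{\tau^{-1}\sigma}$ is the formal consequence $B_i\circ A_i^{-1}$ that you describe. The one point to watch is that when $F$ is a weighted multicurve the cones of a splitting sequence along $F$ do not shrink to the ray of $F$, so the relation $\tau_i\prec\sigma$ really must come from the comparison of the two splitting sequences directed by $F$ rather than from the inclusion $U_{\tau_i}\subset U_\sigma$ --- which is exactly the step you flag as the real work and defer, as the paper does, to \cite{P1} and \cite{PH}.
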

 Property (\ref{ccc}) of Proposition \ref{prop:change} implies that the restriction of the coordinate change map $\psi_{\tau^{-1}\sigma}$ to the neighborhood $N(F)$ of $F$ is linear on each subset $U_{\tau_{i}}$ of $N(F)$. In fact, $\psi_{\tau^{-1}\sigma}$ is PL relative to the given decomposition. 
 
 Proposition \ref{prop:change} is the basic technical result which implies that the coordinate changes in the atlas $\mathcal{A}$ are piecewise-linear. Without going deeply into the details, let us give an idea of the proof. Full details are given in \cite{P1}, Chapter 1.

The proof of the rigidity theorem is based on an analysis of the structure of the set of singular points of the train track PL functions, that is, the points where $f$ is not linear. Indeed, in the atlas $\mathcal{A}$, there are points where non-linearity occurs, and points where non-linearity cannot occur. There is also a hierarchy on the set of points where non-linearity may occur: there are points where it may be more severe than at others. Let us give right away the reason for that. The change of coordinates, for the PL atlas, are obtained by repeated application of the elementary moves on the train tracks (shifts and splits). These elementary moves define subdivisions of the domains of the charts in $\mathcal{A}$ that parametrize the measured foliations classes. At the level of foliations carried by the train track, an elementary move is performed when the foliation has compact leaves joining singular points. Thus the ``bad sets" of the PL structure (that is, those where the coordinate change is not linear) are classes of measured foliations which have compact leaves joining singular points. And the more there are such compact leaves, the more one can perform subdivisions of the domain of the chart, that is, the larger can be the codimension of the singular set to which this point belongs. By examining the topology of measured foliations, we see that the highest degree of singularity corresponds to the set of equivalence classes of measured foliations that are represented by simple closed curves. Thus, this subset of $PMF$ is preserved by a PL map. For similar reasons, the subset of measured foliations represented by systems of $n$ simple closed curves with $n$ fixed is also preserved. This gives the induced action of a PL homeomorphism of $\mathcal{MF}$ (or $\mathcal{PMF}$) on the curve complex.

Now we give some more definitions which make these ideas more precise.

\begin{definition}[The singular set of a PL function] Let $V$ be a polytope and let $f:V\to \mathbb{R}^N$ be a PL function relative to a finite union of polytopes  $V=V_1\cup\ldots\cup V_n$ in $\mathbb{R}^M$. The \emph{singular set} of $f$, denoted by $\mathrm{Sing}(f)$, is the set of points $x\in V_1\cup\ldots\cup V_n$ such that $f$ is not linear in any neighborhood of $x$.

\end{definition}

We observe the following two facts:

(F1) The  set $\mathrm{Sing}(f)$ is a union of codimension-1 submanifolds of $V$, each of which is equal to the intersection of two sets in the collection of polytopes $\{V_1,\ldots, V_n\}$.

(F2) If the polytopes $V_i$ have dimension $D$, then $\mathrm{Sing}(f)$ is contained in a union of polytopes  of dimension $D-1$ in $\mathbb{R}^M$ such that the restriction of $f$ to $\mathrm{Sing}(f)$ is PL.

Repeating this, we find a nested sequence of subsets of $V$, 
\[
\mathrm{Sing}_0(f)\supset \mathrm{Sing}_1(f)\supset \ldots\mathrm{Sing}_k(f),
\] 
where:
\begin{enumerate}

\item  $ \mathrm{Sing}_0(f)= \mathrm{Sing}(f)$;
 
\item  for every integer $i$ satisfying $2\leq i\leq k$, $\mathrm{Sing}_i(f)$ is the singular set of the restriction of $f$ to  $\mathrm{Sing}_{i-1}(f)$.
 \end{enumerate}

Note that $k\leq n$ and that for each $2\leq i\leq k$, $\mathrm{Sing}_{i}(f)$ is contained in a codimension-1 subset of $\mathrm{Sing}_{i-1}(f)$ (see Observation (F2) above).
 
Let $f$ be a PL function defined on a set $V=V_1\cup\ldots\cup V_n$ as above. The associated sequence $\mathrm{Sing}_0(f)\supset \mathrm{Sing}_1(f) \supset \ldots\mathrm{Sing}_k(f)$ defines a stratification of $V$, each stratum having a well-defined codimension in $V$. We shall call this stratification the {\it flag} induced by $f$ on the set $V$, and we shall denote this flag equipped by its stratification by $\mathrm{Fl}(f)$.

Now we consider more precisely  the PL functions relative to the train track PL structure.
 
  \begin{definition}[Train track PL function]\label{def:TTPL} Let $N$ be a nonnegative integer. A map $f:\mathcal{MF}\to  \mathbb{R}^N$ is said to be a {\it train track $\mathrm{PL}$ map} if for every $x$ in  $\mathcal{MF}$, there exists a chart $(U_{\tau},\psi_{\tau})$ belonging to the atlas  $\mathcal{A}$ mentioned in Theorem \ref{th:Th-PL} such that the set $U_{\tau}$ contains $x$ in its interior and  the map $f\circ \psi^{-1}_{\tau}$ defined on $V_{\tau}=\psi_{\tau}(U_{\tau})$ is PL. Furthermore, we require that there exists a coordinate change map $\psi_{\tau^{-1}\sigma}$ belonging to the atlas $\mathcal{A}$ that has $\psi(x)$ in the interior of its domain, and such that the singular sets of the restrictions of the maps $f\circ \psi^{-1}_{\tau}$ and $\psi_{\tau^{-1}\sigma}$ to the set 
  $V_{\tau}$ coincide in a neighborhood of $\psi_{\tau}(x)$ 
  in $V_{\tau}$. (The last condition says that the train track PL functions are not allowed to have singular points other than those that already appear is the train track coordinate change maps.)

  \end{definition}

  Let $\mathcal{P}$ be the set of  train track PL functions on $\mathcal{MF}$. Due to the last condition in Definition \ref{def:TTPL}, the set $\mathcal{P}$ is the set of  smoothest possible  maps on $\mathcal{MF}$ relatively to the atlas $\mathcal{A}$. 
  
     \begin{definition}[Automorphisms of the train track PL structure] A homeomorphism $h:\mathcal{PMF}\to\mathcal{PMF}$ is an {\it automorphism of the train track PL structure} if $h$ is the quotient map of a homeomorphism $h_0: \mathcal{MF}\to \mathcal{MF}$ which preserves the set $\mathcal{P}$ of train track PL functions. 
   \end{definition}

The homeomorphisms of $\mathcal{PMF}$ that preserve the train track PL structure form a group, which we call the automorphism group of the train track PL structure. We denote this group by $\mathrm{Aut}(\mathcal{PMF},\mathcal{P})$.

\begin{proposition}\label{prop:PPP}
The action on $\mathcal{MF}$ of any element of the extended mapping class group preserves the set of train track PL functions.
\end{proposition}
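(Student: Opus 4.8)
The plan is to reduce the statement to the \emph{naturality} of the train track charts under the action of the extended mapping class group. An element $g$ of the extended mapping class group is represented by a homeomorphism of $S$, and since train tracks are defined by purely topological and combinatorial data (an embedded graph with a distinguished half-edge at each switch, together with the conditions on complementary regions), this homeomorphism carries any train track $\tau$ to a train track $g(\tau)$. First I would check that $g$ permutes the set of maximal recurrent train tracks: maximality (all complementary regions are trigons), admissibility, and the switch structure are preserved because $g$ maps complementary regions homeomorphically onto complementary regions, preserving their number of corners; and recurrence is preserved because $g$ induces a bijection between the edges of $\tau$ and those of $g(\tau)$, hence a linear isomorphism $g_{\#}\colon V_{\tau}\to V_{g(\tau)}$ of the weight cones which respects the switch equations $x_i=x_j+x_k$ and carries strictly positive weight systems to strictly positive weight systems.

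The heart of the argument is the equivariance of the parametrization $\varphi_{\tau}$ of (\ref{eq:var}). Applying $g$ to the foliated-rectangle model built on $\tau$ from a weight system $(x_1,\ldots,x_N)$ produces exactly the foliated-rectangle model built on $g(\tau)$ from the same weights read on the image edges. Denoting by $g_{\ast}$ the induced homeomorphism of $\mathcal{MF}$, this gives the commuting relation
\[
g_{\ast}\circ\varphi_{\tau}=\varphi_{g(\tau)}\circ g_{\#}.
\]
By Theorem \ref{homeoF} the maps $\varphi_{\tau}$ and $\varphi_{g(\tau)}$ are homeomorphisms onto their images, so $g_{\ast}(U_{\tau})=U_{g(\tau)}$, interiors mapping to interiors, and on $U_{\tau}$ we obtain $\psi_{g(\tau)}\circ g_{\ast}=g_{\#}\circ\psi_{\tau}$. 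In words: the action of $g$, read through the train track charts, is nothing but the linear relabeling $g_{\#}$.

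With this in hand the computation is immediate. The action on functions is $f\mapsto f\circ g_{\ast}^{-1}$, and to show $\mathcal{P}$ is preserved it suffices to prove $f\circ g_{\ast}^{-1}\in\mathcal{P}$ for every $f\in\mathcal{P}$. Fix $f\in\mathcal{P}$ and a point $y\in\mathcal{MF}$, and set $x=g_{\ast}^{-1}(y)$. By Definition \ref{def:TTPL} there is a chart $(U_{\tau},\psi_{\tau})$ of $\mathcal{A}$ containing $x$ in its interior with $f\circ\psi_{\tau}^{-1}$ PL on $V_{\tau}$. I would use the chart $(U_{g(\tau)},\psi_{g(\tau)})$, which contains $y$ in its interior, and compute, using $\psi_{g(\tau)}^{-1}=g_{\ast}\circ\psi_{\tau}^{-1}\circ g_{\#}^{-1}$,
\[
(f\circ g_{\ast}^{-1})\circ\psi_{g(\tau)}^{-1}=f\circ\psi_{\tau}^{-1}\circ g_{\#}^{-1}=(f\circ\psi_{\tau}^{-1})\circ g_{\#}^{-1}.
\]
Since $g_{\#}^{-1}\colon V_{g(\tau)}\to V_{\tau}$ is a linear isomorphism, it carries the defining polytope decomposition over, so the post-composition of the PL map $f\circ\psi_{\tau}^{-1}$ with $g_{\#}^{-1}$ is again PL; thus $f\circ g_{\ast}^{-1}$ is PL in the chart at $y$.

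It remains to verify the last (smoothness) clause of Definition \ref{def:TTPL}, and this is where I expect the only real bookkeeping. Let $\sigma$ be the train track whose coordinate change $\psi_{\tau^{-1}\sigma}$ witnesses the singular-set condition for $f$ near $\psi_{\tau}(x)$. Taking $\rho=g(\sigma)$ and applying the equivariance of the previous paragraph to both $\tau$ and $\sigma$, the coordinate change $\psi_{(g\tau)^{-1}(g\sigma)}=\psi_{g(\sigma)}\circ\psi_{g(\tau)}^{-1}$ is the $g_{\#}$-conjugate of $\psi_{\tau^{-1}\sigma}$, so its singular set is the $g_{\#}$-image of that of $\psi_{\tau^{-1}\sigma}$; likewise the singular set of $(f\circ g_{\ast}^{-1})\circ\psi_{g(\tau)}^{-1}=(f\circ\psi_{\tau}^{-1})\circ g_{\#}^{-1}$ is the $g_{\#}$-image of the singular set of $f\circ\psi_{\tau}^{-1}$. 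These two coincide near $\psi_{g(\tau)}(y)=g_{\#}(\psi_{\tau}(x))$ precisely because the corresponding sets for $f$ coincide near $\psi_{\tau}(x)$. Hence $f\circ g_{\ast}^{-1}$ is a train track PL function at $y$, and as $y$ was arbitrary, $f\circ g_{\ast}^{-1}\in\mathcal{P}$. The main obstacle is making the equivariance relations above fully rigorous — that is, checking that $\varphi_{\tau}$ and the carrying maps underlying Proposition \ref{prop:change} are genuinely natural with respect to homeomorphisms of $S$ — but once this naturality is established, everything reduces to the elementary fact that a PL map composed with a linear isomorphism is PL.
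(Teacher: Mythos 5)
Your proposal is correct and follows essentially the same route as the paper: the paper's (much terser) proof also observes that a surface homeomorphism carries maximal recurrent train tracks, charts $(U_{\tau},\psi_{\tau})$, and coordinate changes $\psi_{\tau^{-1}\sigma}$ to objects of the same kind, so that the data witnessing Definition \ref{def:TTPL} at $x$ transport to data witnessing it at the image point. Your write-up simply makes explicit the equivariance $g_{\ast}\circ\varphi_{\tau}=\varphi_{g(\tau)}\circ g_{\#}$ and the resulting linear-relabeling formula that the paper leaves implicit.
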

\begin{proof}
Let $h:\mathcal{MF}\to\mathcal{MF}$ be a homeomorphism induced by an extended mapping class, let $N$ be a positive integer and let $f:\mathcal{MF}\to\mathbb{R}^N$ be a map in $\mathcal{P}$. For each $x$ in $\mathcal{MF}$, let $(U_{\tau},\psi_{\tau})$  and $\psi_{\tau^{-1}\sigma}$ be respectively a chart in $\mathcal{A}$ and a coordinate change map in $\mathcal{A}$ satisfying the properties required in Definition \ref{def:TTPL}. Then,  $\tau'=h(\tau)$ and $\sigma'=h(\sigma)$ are maximal recurrent train tracks on $S$, $(h(U_{\tau}),\psi_{\tau'})$ is a coordinate chart in $\mathcal{A}$, $\psi_{\tau'{}^{-1}\sigma'}$ is a coordinate change in the atlas $\mathcal{A}$, and these data satisfy the properties required in Definition \ref{def:TTPL} with respect to the point $h(x)$ instead of the point $x$. 
\end{proof}
From Proposition \ref{prop:PPP}, we get a homomorphism
\[\Gamma^*(S)\to \mathrm{Aut}(\mathcal{PMF},\mathcal{P}).\]

 Our next goal is to prove that, except for a finite number of special surfaces (like in most of the other rigidity theorems), this homomorphism is an isomorphism (Theorem \ref{th:autPL} below).

Before proving this theorem, we need to establish a few more notation and results.

 A {\it system of curves} on $S$ is the isotopy class of a  collection of disjoint and pairwise non-isotopic essential curves on $S$. For $g\geq 2$, the number of elements in such a collection is bounded above by $3g-3$.

For each $k$ satisfying $1\leq k\leq 3g-3$, let $\mathcal{MF}_k\subset \mathcal{MF}$ be the set of measured foliation classes $x$ satisfying the following:
\begin{enumerate}
\item \label{CC1} There is no chart $(U_{\tau},\psi_{\tau})$ in $\mathcal{A}$ having $x$ in the interior of its domain $U_{\tau}$ such that there exists a coordinate change $\psi_{\tau^{-1}\sigma}$ having $\psi_{\tau}(x)$ in the interior of its domain, with $\psi_{\tau}(x)$ being on a stratum of dimension $\leq k-1$ of the flag $\mathrm{Fl}(\psi_{\tau\sigma})$.
\item \label{CC2} There exists a coordinate chart $(U_{\tau},\psi_{\tau})$ in $\mathcal{A}$ having $x$ in the interior of its domain and a coordinate change $\psi_{\tau^{-1}\sigma}$ having $\psi_{\tau}(x)$ in the interior of its domain, such that $\psi_{\tau}(x)$ is on a stratum of dimension $k$ of the flag $\mathrm{Fl}(\psi_{\tau^{-1}\sigma})$.
\item \label{CC3} The element $\psi_{\tau}(x)$ is a convex combination of $k$ elements in the 1-stratum of $\mathrm{Fl}(\psi_{\tau^{-1}\sigma})$ with respect to the linear structure on $V_{\tau}=\psi_\tau(U_\tau)$ induced from its inclusion in $\mathbb{R}^N$.
\end{enumerate} 
  
In particular, $\mathcal{MF}_1\subset \mathcal{MF}$ is simply the set of measured foliation classes $x$  such that  there exists a coordinate chart $(U_{\tau},\psi_{\tau})$ in $\mathcal{A}$ having $x$ in the interior of its domain and a coordinate change $\psi_{\tau^{-1}\psi}$ having $\psi_{\tau}(x)$ in the interior of its domain such that $x$ is on a stratum of dimension 1 of the flag $\mathrm{Fl}(\psi_{\tau^{-1}\sigma})$ defined by the singular set of the coordinate change.

\begin{proposition}\label{prop:preserves}
For each $k\geq 0$, every element of $(\mathcal{MF},\mathcal{P})$ preserves the set  $\mathcal{MF}_k$.
\end{proposition}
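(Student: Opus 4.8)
The plan is to show that each set $\mathcal{MF}_k$ is defined entirely in terms of data that an automorphism of the train track PL structure preserves by construction — namely the singular sets of the functions in $\mathcal{P}$, the flags they induce, and the linear structure of the charts — so that invariance is essentially a matter of unwinding the definitions. Throughout, let $h_0:\mathcal{MF}\to\mathcal{MF}$ be the homeomorphism underlying a given element of $\mathrm{Aut}(\mathcal{PMF},\mathcal{P})$, so that by definition $f\in\mathcal{P}$ if and only if $f\circ h_0\in\mathcal{P}$. It suffices to prove $h_0(\mathcal{MF}_k)\subseteq\mathcal{MF}_k$; applying the same argument to $h_0^{-1}$ then yields equality.

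First I would record the behaviour of the singular sets. For any $f\in\mathcal{P}$ the map $g=f\circ h_0^{-1}$ again lies in $\mathcal{P}$, and since $h_0$ is a homeomorphism one has $h_0(\mathrm{Sing}(f))=\mathrm{Sing}(g)$ and, iterating the construction of the nested singular sets, $h_0(\mathrm{Sing}_i(f))=\mathrm{Sing}_i(g)$ for every $i$. By observation (F1) each stratum of $\mathrm{Fl}(f)$ is a topological submanifold of a well-defined codimension, and a homeomorphism preserves the topological codimension of a submanifold; hence $h_0$ carries $\mathrm{Fl}(f)$ onto $\mathrm{Fl}(g)$ stratum by stratum, sending a dimension-$j$ stratum of the former onto a dimension-$j$ stratum of the latter. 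Because each coordinate-change map $\psi_{\tau^{-1}\sigma}$ is, via the chart $\psi_\sigma$, locally a member of $\mathcal{P}$, this applies in particular to the flags $\mathrm{Fl}(\psi_{\tau^{-1}\sigma})$ that appear in the definition of $\mathcal{MF}_k$. Granting this, conditions (\ref{CC1}) and (\ref{CC2}) transfer immediately: together they assert precisely that the smallest dimension of a stratum, over all admissible charts and coordinate changes, containing the image point is equal to $k$, and this number is a homeomorphism invariant by the preceding discussion.

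For condition (\ref{CC3}) — the assertion that $\psi_\tau(x)$ is a convex combination of $k$ points of the $1$-stratum — I would use that $h_0$ is not merely a homeomorphism but an automorphism of the PL structure, hence itself a train track PL map: expressing $h_0$ in charts as $\psi_{\tau}\circ h_0\circ\psi_{\sigma}^{-1}$ and using that $\psi_\tau\circ h_0\in\mathcal{P}$, one sees that $h_0$ is linear on the relative interior of each top-dimensional polytope and that its own singular locus is subordinate to the singular sets of the coordinate changes. The $k$-simplex spanned by the $k$ points of the $1$-stratum witnessing condition (\ref{CC3}) lies in the closure of a single region of linearity of the relevant coordinate change, so $h_0$ restricts to an affine (indeed linear) bijection of this simplex onto a $k$-simplex whose vertices lie in the $1$-stratum, the $1$-stratum being $\mathcal{MF}_1$, already shown invariant. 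Since an affine map carries a convex combination of vertices to the same convex combination of their images, the image point is again a convex combination of $k$ points of the $1$-stratum, establishing (\ref{CC3}) for $h_0(x)$.

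The hard part will be this last step. Conditions (\ref{CC1})--(\ref{CC2}) are purely topological and pass through any homeomorphism that respects the flags, but condition (\ref{CC3}) is affine in nature, and the crux is to verify that $h_0$ acts linearly on the $k$-simplex along which the convex-combination structure is read off. This rests on showing that an automorphism of the train track PL structure is genuinely PL with singular locus contained in that of the coordinate changes — so that composing with $h_0^{-1}$ introduces no new non-linearity inside the strata — and this is exactly the point where the defining condition of Definition \ref{def:TTPL} (that functions in $\mathcal{P}$ have no singular points beyond those of the coordinate changes) must be brought to bear.
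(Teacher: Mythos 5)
Your proof is correct and follows essentially the same route as the paper's: the paper's own proof is a two-sentence assertion that an automorphism of $(\mathcal{MF},\mathcal{P})$ carries the flag of any coordinate change in $\mathcal{A}$ to the flag of some coordinate change in $\mathcal{A}$ and therefore preserves the three properties defining $\mathcal{MF}_k$. Your write-up simply supplies the details the paper leaves implicit, most usefully the observation that the convex-combination condition is the only non-topological one and that it survives because the automorphism, being an automorphism of the PL structure rather than a bare homeomorphism, acts linearly on the relevant simplex.
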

\begin{proof}
An automorphism of $(\mathcal{MF},\mathcal{P})$ acts on the set of flags of the coordinate changes $\psi_{\tau^{-1}\sigma}$ of $\mathcal{A}$, that is, it carries the flag of any coordinate change in $\mathcal{A}$ to a flag of some coordinate change in $\mathcal{A}$, and it preserves Properties (\ref{CC1}) to  (\ref{CC3}) above that define $\mathcal{MF}_k$, for each $k\geq 1$.
\end{proof}

The proof of the rigidity theorem \ref{th:autPL} is based on a geometric characterization of the elements in each set $\mathcal{MF}_k$, which we give now.

Let $\mathcal{S'}$ be the set of homotopy classes of systems of curves on $S$, that is, the set of homotopy classes of unions of disjoint pairwise non-homotopic simple closed curves.

We assume that $g\geq 2$ and, for every integer $k$ satisfying $1\leq k\leq 3g-3$, we denote by $\mathcal{S}_k$ the subset of $\mathcal{S}'$ consisting of isotopy classes of systems of curves of cardinality $k$. 
 
In particular $\mathcal{S}_1=\mathcal{S}$.

Recall that for each integer $k$ satisfying $1\leq k\leq 3g-3$, there is a natural inclusion $j_k:(\mathbb{R}_+^*)^k\times \mathcal{S}_k \hookrightarrow \mathcal{MF}$, defined by associating to each $v\in(\mathbb{R}_+^*)^k$ and to each  element $C\in\mathcal{S}_k$ the equivalence class of a partial measured foliation $F$ which has the following properties:
  \begin{enumerate}
  \item \label{CCC1} The support of $F$ is the union of disjoint annuli $A_1,\ldots,A_k$ which are foliated by closed leaves;
  \item \label{CCC2} each annulus $A_i$ is a regular neighborhood of a closed curve $c_i$, where $c_1,\ldots,c_k$ are the components of a  system of curves representing the isotopy class $C$;
  \item \label{CCC3} for $1\leq i\leq k$, the total transverse measure of the annulus $A_i$ is equal to  the $i$-th coordinate of $v$.
  \end{enumerate}

We shall call a foliation on $S$ representing an element of  $\mathcal{MF}$ which is the image of some element of $\mathcal{S}'$ by one of the maps $j_k$ an {\it annular foliation}.

\begin{proposition}\label{prop:MFk} For every $k\geq 1$, the image of $(\mathbb{R}^*)^k\times\mathcal{S}_k$ in $\mathcal{MF}$ is the set $\mathcal{MF}_k$. 
\end{proposition}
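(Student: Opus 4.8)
The plan is to prove the two inclusions separately, throughout translating the combinatorial flag data of the train track PL structure into the geometry of compact leaves joining singular points (saddle connections). The bridge between the two languages is Proposition \ref{prop:change}: near a foliation $F$, a coordinate change $\psi_{\tau^{-1}\sigma}$ is assembled from the linear carrying maps of a family adapted to $\tau$, and it fails to be linear exactly across the walls where an elementary move (a split) becomes available, that is, exactly where $F$ acquires a compact leaf joining two singular points. I would first record this as the basic principle governing the flags $\mathrm{Fl}(\psi_{\tau^{-1}\sigma})$: the codimension in $V_\tau$ of the smallest stratum through $\psi_\tau(F)$ counts the independent saddle connections of $F$, so that, minimizing over admissible charts, the dimension of the minimal stratum through $\psi_\tau(F)$ equals the number of independent weights carried by the maximally split representative of $F$. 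In particular the dimension-$1$ strata are the rays on which this representative concentrates its entire weight on a single embedded cycle of edges.

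The heart of the argument is the base case $k=1$: the dimension-$1$ strata of the flags are exactly the classes of weighted single simple closed curves, so that $\mathcal{MF}_1=j_1((\mathbb{R}_+^*)\times\mathcal{S})$. I would prove this by showing that a foliation carried by a maximal recurrent train track lies on a ray of the splitting subdivision if and only if iterating the splits of Proposition \ref{prop:change} eventually concentrates its entire weight on a single embedded cycle; unpacking the construction of $\varphi_\tau$ from Theorem \ref{homeoF}, such a foliation is an annular foliation on one annulus, i.e. a weighted simple closed curve, and conversely every weighted simple closed curve is maximally singular in this sense. This is the precise meaning of the informal statement made above that the highest degree of singularity corresponds to foliations represented by simple closed curves.

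For the forward inclusion I would take a weighted system $\sum_{i=1}^k t_i c_i$ with the $c_i$ disjoint and pairwise non-isotopic and $t_i>0$, and exhibit a maximal recurrent train track $\tau$ carrying the $c_i$ along $k$ disjoint cycles of edges. In the chart $(U_\tau,\psi_\tau)$ one then has $\psi_\tau\!\left(\sum_i t_i c_i\right)=\sum_i t_i\,\psi_\tau(c_i)$, a genuine convex combination (after normalization) of the $k$ rays $\psi_\tau(c_i)$, each of which lies on a dimension-$1$ stratum by the base case. Since the $k$ cycles are disjoint, the available splits decouple annulus by annulus, so Proposition \ref{prop:change} places this point on a stratum of dimension exactly $k$ and forbids any chart placing it on a stratum of dimension $\leq k-1$; that is, it verifies properties (\ref{CC1})--(\ref{CC3}). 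Hence every annular foliation on $k$ annuli, i.e. every element of the image of $j_k$, lies in $\mathcal{MF}_k$.

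For the reverse inclusion I would take $x\in\mathcal{MF}_k$ and argue as follows. Property (\ref{CC3}) writes $\psi_\tau(x)=\sum_{i=1}^k t_i v_i$ with each $v_i$ on a dimension-$1$ stratum, so by the base case each $\varphi_\tau(v_i)$ is a weighted simple closed curve $c_i$; properties (\ref{CC1})--(\ref{CC2}) force the cone spanned by the $v_i$ to have dimension $k$, so the $c_i$ are pairwise non-isotopic. It remains to show the $c_i$ are pairwise \emph{disjoint}, after which $x=\sum_i t_i c_i$ is visibly the annular foliation $j_k(v,C)$ and the proof is complete. \textbf{This disjointness is the main obstacle.} The point is to rule out that two \emph{transverse} curves could both sit on dimension-$1$ strata of the \emph{same} coordinate-change flag while their weighted sum still sits on a dimension-$k$ stratum of that flag: intersecting curves carried by a common train track produce additional compact leaves joining singular points, hence extra splitting walls, which strictly lower the dimension of the minimal stratum through $\psi_\tau(x)$ and violate property (\ref{CC1}). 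Making this last implication rigorous --- quantifying how geometric intersection between the $c_i$ creates new saddle connections and thereby new strata in $\mathrm{Fl}(\psi_{\tau^{-1}\sigma})$ --- is the technical crux, and it is exactly the computation carried out in detail in Chapter 1 of \cite{P1}.
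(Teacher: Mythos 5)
Your proposal follows essentially the same route as the paper: the forward inclusion is obtained by constructing a maximal recurrent train track carrying the $k$ disjoint curves (the paper's pinching construction) and invoking Proposition \ref{prop:change} to place the point on a dimension-$k$ stratum of a coordinate-change flag, while the reverse inclusion decodes condition (\ref{CC3}) as a convex combination of points on dimension-$1$ strata, each identified with a weighted simple closed curve. You are in fact more explicit than the paper about the one genuinely delicate step --- showing that the resulting curves are pairwise disjoint, which the paper disposes of in a single sentence --- and, like the paper, you correctly defer the underlying train-track computation to Chapter 1 of \cite{P1}.
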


\begin{proof} 
Let $F\in \mathcal{MF}$ be a measured foliation class which is in the image of $(\mathbb{R}^*)^k\times\mathcal{S}_k$. We must show that it satisfies Properties (\ref{CCC1}) to (\ref{CCC3})  above. The proof is based on techniques used in \cite{P1} Chapter 1. The idea is as follows. We start by representing $F$ by a system of weights on a train track induced by a union of $k$ disjoint simple closed curves, representing the given element of $(\mathbb{R}^*)^k\times\mathcal{S}_k$. We pinch this system of curves along a system of disjoint arcs having their endpoints on these curves in order to obtain a maximal recurrent train track $\tau$ such that $F$ is in the interior of the associated  polytope  $V_{\tau}$. A pinching operation is represented in Figure \ref{fig:pinching}.  Now by choosing a different system of arcs, we can obtain a maximal recurrent train track  $\sigma$ such that $F$ is in the interior of the polytope  $V_{\sigma}$ associated to $\sigma$, and we can choose the new system of arcs so that $F$ is in the dimension-$k$ stratum of the flag  $\mathrm{Fl}(\psi_{\tau^{-1}\sigma})$ of the coordinate change map. This uses the description of the coordinate changes that is contained in Proposition \ref{prop:change} above. With this, it is easy to see that $F$ is on a singular set that has the required properties and is an element of $\mathcal{MF}_k$.
 
 Conversely, any measured foliation satisfying Properties  (\ref{CCC1}) to (\ref{CCC3})   is in the image of $(\mathbb{R}^*)^k\times\mathcal{S}_k$.  Note that Condition (\ref{CCC3}) ensures that the measured foliation is the class of a collection of weighted curves, that is, all its components are annular.

\end{proof}

 \begin{figure}[!hbp]
 \begin{center}
\scalebox{0.6}{\includegraphics{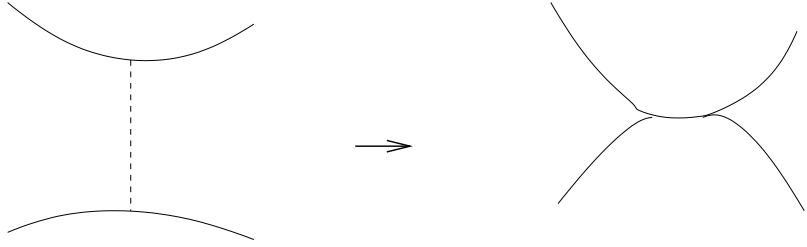}}
\end{center}
\caption{The pinching operation that is used in the proof of Proposition \ref{prop:MFk}.}
\label{fig:pinching}
\end{figure}

\begin{theorem}[\cite{Papadopoulos2008}] \label{th:autPL} Assume that $S$ is a closed surface of genus $\geq 3$.
Then, the natural homomorphism \[\Gamma^*(S)\to \mathrm{Aut}(\mathcal{PMF},\mathcal{P})\] is an isomorphism. In the case where $S$ is a closed surface of genus 2, this homomorphism is surjective, and its kernel is isomorphic to $\mathbb{Z}_2$ and is generated by the hyperelliptic involution.
\end{theorem}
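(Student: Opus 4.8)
The plan is to factor the homomorphism $\Gamma^*(S)\to\mathrm{Aut}(\mathcal{PMF},\mathcal{P})$ through the curve complex and then invoke the Ivanov--Korkmaz--Luo theorem (Theorem \ref{th:IKL}). The combinatorial bridge is the stratification $\mathcal{MF}_1,\mathcal{MF}_2,\ldots$ of annular foliation classes. Two facts make this work: by Proposition \ref{prop:preserves} every element of $\mathrm{Aut}(\mathcal{MF},\mathcal{P})$ preserves each stratum $\mathcal{MF}_k$, and by Proposition \ref{prop:MFk} the stratum $\mathcal{MF}_k$ is precisely the image of $(\mathbb{R}^*)^k\times\mathcal{S}_k$, i.e.\ the set of weighted systems of $k$ disjoint essential curves. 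Together these let an \emph{a priori} merely topological automorphism detect the curve complex, since $\mathcal{S}_k$ is exactly the set of $(k-1)$-simplices of $C(S)$.

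For surjectivity I would take an arbitrary $h\in\mathrm{Aut}(\mathcal{PMF},\mathcal{P})$, lift it to $h_0\in\mathrm{Aut}(\mathcal{MF},\mathcal{P})$, and first use that $h_0$ preserves $\mathcal{MF}_1$, which projects in $\mathcal{PMF}$ onto $\mathcal{S}=\mathcal{S}_1$; this yields a bijection $\beta\colon\mathcal{S}\to\mathcal{S}$. The heart of the argument is to upgrade $\beta$ to a \emph{simplicial} automorphism of $C(S)$. Given $\{c_1,\ldots,c_k\}\in\mathcal{S}_k$, the class $[c_1+\cdots+c_k]$ lies in $\mathcal{MF}_k$ and, together with its degenerations, spans a projective $(k-1)$-simplex $\Sigma$ in $\mathcal{PMF}$ whose relative interior lies in $\mathcal{MF}_k$, whose open faces lie in the lower strata $\mathcal{MF}_j$, and whose vertices are the $[c_i]$. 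Since $h$ preserves every stratum, is a homeomorphism, and is PL, it carries this stratified configuration to one of the same type, so $h(\Sigma)$ is again a weighted $k$-system (Proposition \ref{prop:MFk}) and $h$ matches the vertices $[c_i]$ of $\Sigma$ with those of $h(\Sigma)$, namely the $\beta(c_i)$. Hence $\{\beta(c_1),\ldots,\beta(c_k)\}\in\mathcal{S}_k$; applying the same reasoning to $h^{-1}$ shows that $\beta$ preserves the full face structure and so defines $h_*=\beta\in\mathrm{Aut}(C(S))$.

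Having produced $h_*\in\mathrm{Aut}(C(S))$, Theorem \ref{th:IKL} furnishes $g\in\Gamma^*(S)$ with $g_*=h_*$ (an isomorphism for genus $\geq 3$, and surjective for genus $2$). I then set $\phi=g^{-1}h$, an element of $\mathrm{Aut}(\mathcal{PMF},\mathcal{P})$ that fixes every vertex $[c]\in\mathcal{S}\subset\mathcal{PMF}$. Because the projective classes of simple closed curves are dense in $\mathcal{PMF}$ and $\phi$ is a homeomorphism, $\phi$ is the identity, whence $h=g$ and the homomorphism is surjective in both genus ranges. For the kernel, any $g\in\Gamma^*(S)$ acting trivially on $\mathcal{PMF}$ fixes every $[c]$ and therefore lies in the kernel of $\Gamma^*(S)\to\mathrm{Aut}(C(S))$; by Theorem \ref{th:IKL} this kernel is trivial for genus $\geq 3$ and is generated by the hyperelliptic involution $\iota$ for genus $2$. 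Conversely $\iota$ fixes every isotopy class of curve, hence preserves all intersection numbers and acts trivially on $\mathcal{PMF}$; thus in genus $2$ the kernel is exactly $\mathbb{Z}_2$, completing the statement.

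I expect the main obstacle to be the surjectivity step of the second paragraph: making rigorous that preserving the strata $\{\mathcal{MF}_k\}$ \emph{together with} the local linear (PL) structure forces $h$ to send the projective simplex $\Sigma$ to another projective simplex and to respect its vertex--face incidences, so that the combinatorics of $C(S)$ is genuinely recovered from the ``badness hierarchy'' of singular sets. This is precisely where the fine description of the coordinate changes in Proposition \ref{prop:change}, the flags $\mathrm{Fl}(\psi_{\tau^{-1}\sigma})$, and the geometric identification in Proposition \ref{prop:MFk} must be combined; the remaining steps (density of simple closed curves and the kernel computation) are then comparatively routine.
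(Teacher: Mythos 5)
Your proposal follows the paper's own proof essentially step for step: use Propositions \ref{prop:preserves} and \ref{prop:MFk} to show the automorphism preserves each stratum $\mathcal{MF}_k$ of weighted $k$-component curve systems, deduce an induced simplicial automorphism of $C(S)$, apply Theorem \ref{th:IKL} to produce a mapping class, and conclude by density of the (projectivized) simple closed curves together with the known kernel of $\Gamma^*(S)\to\mathrm{Aut}(C(S))$. The only difference is that you spell out more explicitly (via the face structure of the projective simplex spanned by a $k$-system) why the vertex bijection extends simplicially, and you make the genus-2 kernel computation explicit; both are elaborations of exactly what the paper does.
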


\begin{proof}
Let $f$ be an element of $ \mathrm{Aut}(\mathcal{PMF},\mathcal{P})$. By Proposition \ref{prop:preserves}, $f$ preserves the subset $\mathcal{PMF}_1$ of $\mathcal{PMF}$. By Proposition \ref{prop:MFk}, $\mathcal{PMF}_1$ is the natural image of $\mathbb{R}_+^*\times\mathcal{S}$ in $\mathcal{MF}$, that is, the set of equivalence classes of measured foliations that are representable by foliations whose nonsingular leaves are all closed curves homotopic to a connected simple closed curve. Thus, $\mathcal{PMF}_1$ is in natural one-to-one correspondence with the set $\mathbb{R}_+^*\times\mathcal{S}$ of isotopy classes of weighted essential curves on $S$. 

Since $f$ acts linearly on rays, it acts on the image in $\mathcal{PMF}$ of the set $\mathcal{S}$ of isotopy classes of essential curves,  which is identified with the set of vertices of the curve complex $C(S)$. Therefore, $f$ defines a map  from the vertex set of the curve complex $C(S)$ to itself, and it follows from the fact that $f$ is a homeomorphism that this map is a bijection.

Similarly, by Proposition \ref{prop:preserves}, for each $k=2,\ldots,3g-3$, $f$ preserves the set $\mathcal{PMF}_k$ of $\mathcal{MF}$ which, again by Proposition \ref{prop:MFk}, is naturally identified with the set $\mathcal{S}_k$ of isotopy classes of weighted systems of curves which have $k$ components. Thus, $f$ also induces a map of the set of $(k-1)$-simplices of $C(S)$. Therefore, the bijection induced by $f$ on the vertex set of $C(S)$ can be naturally extended to a simplicial automorphism of $C(S)$. 

By Theorem \ref{th:IKL}, the action of $f$ on $C(S)$ is induced by an element $\gamma$ of the extended mapping class group of $S$. It is clear from the definitions of these actions that the restriction of $f$ and of the extended mapping class $\gamma$  on the image of  $\{1\}\times \mathcal{S}$ (and, even, of $\{1\}\times \mathcal{S}'$) in $\mathcal{MF}$ coincide.

Since the image of $\mathcal{S}$ in $\mathcal{PMF}$ is dense and since the actions of $f$ and $\gamma$ on $\mathcal{PMF}$ are continuous, the two actions coincide on $\mathcal{PMF}$. Thus, each automorphism of $(\mathcal{PMF},\mathcal{P})$ is induced by an extended mapping class. This proves the surjectivity of the homomorphism $\Gamma^*(S)\to \mathrm{Aut}(\mathcal{PMF},\mathcal{P})$. The injectivity  in genus $\geq 3$ and the fact that in genus 2 the kernel is $\mathbb{Z}_2$ follow from the fact that the homomorphism from the extended mapping class group to the automorphism group
 of the curve complex is injective except in genus two and in that case the kernel is $\mathbb{Z}_2$.\end{proof}

The rigidity result, Theorem \ref{th:autPL}, can be seen as a rigidity theorem for some boundary structure of Teichm\"uller space. We mention the following:
     \begin{problem}
   Find a proof of Royden's rigidity theorem on the isometry group of the Teichm\"uller metric by using  Theorem \ref{th:autPL}. 
     \end{problem}
One approach would be to use the Finsler structure of the Teichm\"uller metric, where the unit ball at each point at each point is the space of projective measured foliations. The questions is then to introduce somehow in this picture the piecewise-linearity of the action on projective foliation space.

Another natural problem is the following:
\begin{problem}
   Find rigidity results similar to Theorem \ref{th:autPL}, for the other boundaries of Teichm\"uller space, each boundary equipped with an appropriate structure.
     \end{problem}
     One may think of the Gardiner-Masur boundary \cite{GM1}, of the Poisson boundary \cite{KM1}, of the boundary of the augmented Teichm\"uller space \cite{Ab}, and there are others.  We also refer to the paper \cite{Ohshika-H} for a survey on compactifications of Teichm\"uller space.

   \subsection{Homeomorphisms of the space of unmeasured foliations} \label{s:unmeasured}

In this section, $S=S_{g,n}$ is a surface of genus $g\geq 0$ with $n\geq 0$ punctures and $\mathcal{UMF}=\mathcal{UMF}(S)$ is the quotient of the space $\mathcal{MF}$ of measured foliations on $S$ by the equivalence relation which identifies  two elements whenever they can be represented by topologically equivalent foliations, that is, foliations that are isotopic after forgetting the transverse measure. We call the elements of $\mathcal{UMF}$ \emph{unmeasured foliations} and we call $\mathcal{UMF}$ \emph{unmeasured foliation space}. It is equipped with the quotient topology inherited from $\mathcal{MF}$.

Given a measured foliation $F$, we shall use the same notation $[F]$ for its images in $\mathcal{MF}$ and in $\mathcal{UMF}$, specifying explicitly the space to which $[F]$ belongs whenever this is necessary.

 In this subsection, we present the result saying that the action of the extended mapping class group on $\mathcal{UMF}$ is rigid. Before that, let us make a few comments on the appearance of $\mathcal{UMF}$ in various contexts.

 \medskip
  \begin{enumerate}
\item  
Klarreich \cite{Klarreich} identified the Gromov boundary of
$C(S)$ with the subspace $\mathcal{EF}$ of $\mathcal{UMF}$ consisting of equivalence classes of foliations satisfying the following two properties:
 \begin{enumerate}  
 \item \label{KK1} the foliation is \emph{maximal} in the sense that its intersection number with any simple closed curve on $S$ is nonzero;
  \item \label{KK2}  the foliation is \emph{minimal} in the sense that every leaf is dense in the support.
 \end{enumerate}
We shall see that $\mathcal{UMF}$ is a non-Hausdorff topological space. But unlike  $\mathcal{UMF}$, the subspace $\mathcal{EF}$ (being a Gromov boundary) is Hausdorff. This is compatible with the fact (which we shall see below) that the non-Hausdorffness of $\mathcal{UMF}$ is concentrated at the measured foliations that do not satisfy one of the two above properties. The Gromov boundary of the curve complex was used as an essential ingredient in the proof of Thurston's Ending Lamination Conjecture.  In the theory of end invariants of hyperbolic 3-manifolds, convergence of a sequence of simple closed geodesic towards the ending lamination of a simply degenerate end is interpreted in terms of convergence of the vertices in the curve complex representing these curves towards a point in $\mathcal{EF}$. We refer the reader to \S 4.2 of \cite{Ohshika-H} for a short account on end invariants.

  \item Thurston conjectured that the Bers boundary of Teichm\"uller space, up to quasiconformal equivalence, is homeomorphic to unmeasured lamination space (which is closely related to unmeasured foliation space). We shall see this more precisely below, in the work of Ohshika on the reduced Bers boundary, who proved Thurston's conjecture in a more precise form (see \S \ref{s:reduced}).

\item If we take the quotient of the Teichm\"uller compactifiation by  replacing Thurston's boundary by unmeasured foliation spacen then the action of the mapping class group extends continuously to this new space. This was claimed by Kerckhoff in \cite{Ke}; see also the discussion in Ohshika in \cite{Ohshika-H}.

 \end{enumerate}

  The extended mapping class group $\Gamma^*= \Gamma^*(S)$  of $S$ acts naturally by homeomorphisms on $\mathcal{UMF}$.  

 Let $\mathrm{Homeo}(\mathcal{UMF}(S))$ be the group of homeomorphisms of $\mathcal{UMF}(S)$. We have the following.

\begin{theorem}[Ohshika \cite{Ohshika2012}] \label{th:Ohshika2012} Suppose that $S$ is not a sphere with at most four punctures or a torus with at most two punctures and consider the natural homomorphism 
\[\Gamma^*(S) \to \mathrm{Homeo}(\mathcal{UMF}(S)).\]
Then,
\begin{enumerate}
\item If $S$ is not  the closed surface of genus 2, this homomorphism is an isomorphism. 
\item If  $S$ is the closed surface of genus 2, this homomorphism is surjective and not injective, and its kernel is $\mathbb{Z}_2$.
\end{enumerate}
\end{theorem}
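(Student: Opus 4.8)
The plan is to follow the template common to all four rigidity results: use the ``badness'' of $\mathcal{UMF}$ --- here its non-Hausdorffness --- to single out, in a homeomorphism-invariant way, the points that represent systems of disjoint simple closed curves, and thereby produce an action on the curve complex $C(S)$ to which Theorem \ref{th:IKL} can be applied. First I would make the non-Hausdorffness precise through the associated \emph{specialization preorder}: say that $[G]$ is a degeneration of $[F]$ if $G$ is obtained from $F$ by deleting one or more components of its canonical decomposition into minimal and annular pieces. Scaling the transverse measure on the deleted components to zero realizes $G$ as a limit in $\mathcal{MF}$ of foliations all topologically equal to $F$, so in the quotient topology $[G]$ lies in every neighborhood of $[F]$; hence $[G]\in\overline{\{[F]\}}$ and the two points are non-separated. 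Any homeomorphism of $\mathcal{UMF}$ preserves this preorder, the set of non-separated pairs, the closed points (the single-component foliations, which are the minimal elements of the preorder), and the Hausdorff points. By the remarks following Klarreich's theorem, the Hausdorff points are exactly the minimal and maximal foliations, that is, the arational filling foliations forming $\mathcal{EF}$.

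The heart of the argument, and the step I expect to be the main obstacle, is to characterize topologically the image of $\mathcal{S}_k$ in $\mathcal{UMF}$, i.e. the systems of $k$ disjoint curves, and in particular to distinguish an \emph{annular} component (a simple closed curve) from a \emph{minimal} component supported on a proper subsurface. Both are closed points that are non-separated from strictly more generic points, so the crude closed-versus-Hausdorff dichotomy does not separate them; what must be exploited is the finer hierarchy of badness. The idea I would pursue is that an annular component is the most degenerate kind of component: a single curve $[c]$ is non-separated from every foliation whose support is disjoint from $c$, and the way neighborhoods of $[c]$ fail to separate it from its generizations is strictly richer than at a minimal component. Turning this into a clean invariant that also reads off the number $k$ of components is the delicate combinatorial-topological analysis; it is the analogue here of the singular-set flag $\mathrm{Fl}(f)$ used in the proof of Theorem \ref{th:autPL}. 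Granting it, the image of $\mathcal{S}_k$ is invariant under every homeomorphism with $k$ preserved; moreover two single curves are non-separated precisely when they are disjoint (a common two-curve system degenerates to each), so the non-separation relation on these points recovers the edge relation of $C(S)$.

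With curve systems identified and $k$ preserved, any homeomorphism $h$ of $\mathcal{UMF}$ induces a bijection of the vertex set of $C(S)$ that sends $k$-systems to $k$-systems and preserves disjointness, hence a simplicial automorphism of $C(S)$. Since $S$ avoids the excluded low-complexity surfaces, Theorem \ref{th:IKL} provides an extended mapping class $\gamma\in\Gamma^*(S)$ inducing the same automorphism of $C(S)$, unique except that in genus $2$ it is determined only up to the hyperelliptic involution. In $\mathcal{UMF}$ a curve system is just the \emph{set} of its components (the weights being forgotten), so $h$ and $\gamma$ agree on the whole image of $\mathcal{S}'$.

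The final step is to upgrade this to $h=\gamma$ on all of $\mathcal{UMF}$, and here one must be careful because, $\mathcal{UMF}$ being non-Hausdorff, agreement on a dense set does not by itself force equality. By Thurston's density of weighted simple closed curves in $\mathcal{MF}$ together with the quotient-topology argument above, the image of $\mathcal{S}$ is nevertheless dense in $\mathcal{UMF}$, so for every $x$ the point $g(x):=\gamma^{-1}h(x)$ is non-separated from $x$. Since $g$ fixes every Hausdorff point, it is the identity on $\mathcal{EF}$; combining this with the fact that $g$ fixes every curve system and preserves the specialization preorder --- so it fixes the annular and, by an induction on the complexity of the complementary subsurfaces, the minimal components of an arbitrary foliation --- one concludes $g=\mathrm{id}$. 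This proves surjectivity. For injectivity, a mapping class in the kernel fixes every curve and hence acts trivially on $C(S)$, so the kernel equals $\ker\big(\Gamma^*(S)\to\mathrm{Aut}(C(S))\big)$, which by Theorem \ref{th:IKL} is trivial when $S$ is not the closed genus-$2$ surface and is the order-two hyperelliptic subgroup in that case.
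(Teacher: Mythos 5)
Your overall architecture is the same as the paper's: use the non-Hausdorffness of $\mathcal{UMF}$ to characterize the image of $\mathcal{S}'$ topologically, read off an automorphism of $C(S)$, apply Theorem \ref{th:IKL}, and then extend the agreement from the dense set of curve systems to all of $\mathcal{UMF}$ by a second non-Hausdorffness argument. But at both of the points you yourself flag as the hard ones, the proposal stops short of a proof. For the first: you correctly observe that the closed-point/Hausdorff-point dichotomy cannot distinguish an annular component from a minimal component filling a proper subsurface, and then write ``Granting it.'' That granted step is the technical heart of Proposition \ref{prop:Papadopoulos2007}. The paper supplies it with a concrete numerical invariant, the adherence number $\mathcal{N}(x)$ (the maximal cardinality of a complete adherence set through $x$): one computes $\mathcal{N}([F])=2^q-1$ where $q$ is the maximal number of components of a foliation containing $F$ (Proposition \ref{prop:max}), so that $q=3g-3$ is attained exactly at annular foliations (Proposition \ref{prop:nb}); the number $k$ of components is then recovered because the adherence set of a $k$-curve system is assembled from copies of $\mathcal{UMF}$ of complementary subsurfaces, which Corollary \ref{cor:not} tells apart (Proposition \ref{prop:Sk}). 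Some such quantitative invariant is needed; your qualitative ``strictly richer failure of separation'' is the right intuition but is not yet an invariant.

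For the second: your warning that density does not suffice in a non-Hausdorff space is exactly right, and your deduction that $g(x)=\gamma^{-1}h(x)$ is adherent to $x$ is correct. But adherence only yields $i(F,G)=0$ (Lemma \ref{lem:int}), which still allows, for instance, $g$ to send a minimal foliation filling a proper subsurface to a disjoint minimal foliation of the same complexity; your proposed ``induction on the complexity of the complementary subsurfaces'' does not visibly exclude this, and fixing $\mathcal{EF}$ pointwise does not help for such non-filling $x$. Ohshika's actual argument is different and sharper: using the asymmetric relation of unilateral adherence (Lemma \ref{lem:uni}) and Lemma \ref{lem:Ohsh-curves}, the only alternative to $h(x)=h^*(x)$ is that $h(x)$ is unilaterally adherent to $h^*(x)$, and this is killed by the adherence height $\mathrm{a.h.}$, which strictly drops under unilateral adherence (Lemma \ref{lem:ad}) while being preserved by every homeomorphism (Lemma \ref{lem:ad-h}). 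You should either substitute this adherence-height argument for your induction or supply the missing inductive details; as written, both gaps are genuine.
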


Theorem \ref{th:Ohshika2012} shows that the space $\mathcal{UMF}(S)$, unlike the space $\mathcal{MF}(S)$,  is topologically very inhomogeneous. Its homeomorphism group is countable, therefore this space does not contain any open set which is a manifold of positive dimension. This contrasts with the space $\mathcal{MF}$ of measured foliations, which is a manifold.

The proof of Theorem \ref{th:Ohshika2012} is based on extensions of techniques that were introduced in \cite{Papadopoulos2007}, where the following result was proved:

\begin{proposition}[Papadopoulos \cite{Papadopoulos2007}]\label{prop:Papadopoulos2007} 
   Let $S$ be an orientable surface of finite type which is not a sphere with at most four punctures or a torus with at most two punctures and let  $\mathcal{D}$ be the natural image in $\mathcal{UMF}$ of the set $\mathcal{S}'$ of systems of curves on $S$.   Then, 
 \begin{enumerate} 
 \item \label{PP1} $\mathcal{D}$ is dense in $\mathcal{UMF}$, and it is invariant by the group $\mathrm{Homeo}(\mathcal{UMF})$. Furthermore, if $h$ is any homeomorphism of $\mathcal{UMF}$, then there exists an element $h^*$ of $\Gamma^*(S)$ such that the restriction of the actions of $h$ and $h^*$ on $\mathcal{D}$ coincide.
 
 \item \label{PP2}   In the case where $S$ is not the closed surface of genus 2, if $h_1$ and $h_2$ are distinct elements of $\Gamma^*(S)$, then their induced actions on $\mathcal{D}$ are distinct. In particular, the natural homomorphism $\Gamma^*(S)\to \mathrm{Homeo}(\mathcal{UMF})$ is injective. 
 
 \item \label{PP3}  In the case where $S$ is the closed surface of genus 2,  the kernel of the homomorphism from $\Gamma^*$ to the homeomorphism group of $\mathcal{D}$ is $\mathbb{Z}_2$.
 \end{enumerate}
\end{proposition}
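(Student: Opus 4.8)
The plan is to follow the template already used for Theorem~\ref{th:autPL}: isolate the subset $\mathcal{D}$ of systems of curves by an \emph{intrinsic} topological property of $\mathcal{UMF}$, show that this property, together with the number of components of a system, is preserved by every homeomorphism, deduce an induced simplicial automorphism of the curve complex $C(S)$, and finally invoke Theorem~\ref{th:IKL}. The role played in the piecewise-linear setting by the stratified singular sets of the coordinate changes will here be played by the \emph{non-Hausdorffness} of $\mathcal{UMF}$, organized into a hierarchy by the number of curve components, so that the systems of curves sit at the ``extreme'' of this hierarchy exactly as they sat on the deepest singular stratum in Theorem~\ref{th:autPL}.

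First I would dispose of density: weighted simple closed curves are dense in $\mathcal{MF}$ (Thurston), so their images lie in $\mathcal{D}$ and are dense in the quotient $\mathcal{UMF}$, which gives the density assertion of part~(\ref{PP1}) and will be used at the very end to propagate an equality of maps from $\mathcal{D}$ to all of $\mathcal{UMF}$. The heart of the argument is then an intrinsic characterization of $\mathcal{D}$ and of its strata $\mathcal{D}_k$, the images of $\mathcal{S}_k$. I would work with the specialization preorder of the quotient topology, $x\preceq y \iff x\in\overline{\{y\}}$, together with the non-separation relation ($x$ and $y$ admit no disjoint neighbourhoods). The geometric engine is the canonical decomposition of a measured foliation into minimal components (each filling a subsurface) and annular components (weighted curves): a degeneration collapsing the transverse measure of one component realizes exactly the relation $\preceq$, so that the closure and non-separation structure around a point records precisely which curves and subsurfaces can be pinched off.

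Using Klarreich's theorem that the subspace $\mathcal{EF}$ of minimal filling foliations is \emph{Hausdorff}, while curves genuinely produce non-separated points (for $i(c,d)\neq 0$ the fixed topological type of $c+\varepsilon d$ is non-separated from $[c]$), I would characterize $\mathcal{D}$ as the set of closed points $x$ that are non-Hausdorff and carry no minimal filling subfoliation, equivalently, whose whole neighbouring non-separated set is produced by annular pinchings. The number $k$ of components is then recovered as the \emph{depth} of this non-Hausdorff structure, namely the maximal number of independent pinching degenerations admitting $x$ as a common limit, which is the exact analogue of the codimension-$k$ stratum of Theorem~\ref{th:autPL}. This makes each $\mathcal{D}_k$ invariant under $\mathrm{Homeo}(\mathcal{UMF})$ and shows that homeomorphisms preserve the sub-system (hence disjointness) relation among curve systems. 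I expect this step, making the hierarchy precise and proving that the depth matches the number of components uniformly outside the excluded surfaces, to be the main obstacle; the excluded cases (a sphere with at most four punctures or a torus with at most two punctures) are exactly those where $C(S)$ degenerates or where Theorem~\ref{th:IKL} fails to give a clean statement.

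With $\mathcal{D}_1$ (the single curves, i.e. the vertex set of $C(S)$) and the disjointness relation both shown to be topologically intrinsic, any $h\in\mathrm{Homeo}(\mathcal{UMF})$ induces a bijection of the vertices of $C(S)$ preserving adjacency, that is, a simplicial automorphism of $C(S)$. By Theorem~\ref{th:IKL} this automorphism is induced by some $h^{*}\in\Gamma^{*}(S)$, and since $h$ and $h^{*}$ agree on vertices and on all systems they agree on $\mathcal{D}$, which proves part~(\ref{PP1}). For parts~(\ref{PP2}) and (\ref{PP3}), the action of $\Gamma^{*}(S)$ on $\mathcal{D}$ is $\Gamma^{*}(S)$-equivariantly identified with its action on the simplices of $C(S)$, so its kernel on $\mathcal{D}$ equals the kernel of $\Gamma^{*}(S)\to\mathrm{Aut}(C(S))$; by Theorem~\ref{th:IKL} this kernel is trivial except when $S=S_{2,0}$, where it is the order-two group generated by the hyperelliptic involution. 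This gives the injectivity of part~(\ref{PP2}) and the $\mathbb{Z}_{2}$ kernel of part~(\ref{PP3}).
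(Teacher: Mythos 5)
Your overall strategy (density, an intrinsic topological characterization of the curve systems via the non-Hausdorffness of $\mathcal{UMF}$, passage to an automorphism of $C(S)$, Theorem~\ref{th:IKL}, and the identification of the kernel) is exactly the paper's template, and your endgame for parts~(\ref{PP2}) and~(\ref{PP3}) is correct. The gap is in the one step that carries all the weight: your proposed intrinsic characterization of $\mathcal{D}$ does not work as stated. First, the ``closed points'' criterion is backwards: in the quotient topology, $x\in\overline{\{y\}}$ exactly when the foliation representing $y$ is a proper subfoliation of the one representing $x$ (Lemma~\ref{lem:uni}), so the closure of a single curve $[c]$ is the large set of all classes containing $c$; curves are maximally \emph{non}-closed, and the closed points are the non-extendable (e.g.\ filling) foliations. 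Second, and more seriously, ``non-Hausdorff point carrying no minimal filling subfoliation'' does not separate a system of curves from a foliation having a minimal component that fills a \emph{proper} subsurface: both are non-separated points, and neither contains a filling subfoliation, so Klarreich's Hausdorffness of $\mathcal{EF}$ (which concerns only foliations minimal and filling in all of $S$) cannot exclude the latter. The paper closes exactly this hole with a counting argument you have no substitute for: the \emph{adherence number} $\mathcal{N}(x)$, the maximal cardinality of a complete adherence set containing $x$, equals $2^{q}-1$ where $q$ is the largest number of components of a foliation containing $x$ (Proposition~\ref{prop:max}); this is $2^{3g-3}-1$ precisely for annular foliations and strictly smaller otherwise (Proposition~\ref{prop:nb}), because a non-annular component occupies a subsurface that could otherwise carry several disjoint curves.

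Your recovery of the number of components $k$ is also not established: ``the maximal number of independent pinching degenerations admitting $x$ as a common limit'' is left undefined, and note that in the paper \emph{all} annular foliations, regardless of $k$, have the same (maximal) adherence number, so no single depth-type invariant of this kind distinguishes the $\mathcal{D}_k$ from one another. The paper instead recovers $k$ from the homeomorphism type of the adherence set of $x$, which is built from copies of $\mathcal{UMF}$ of the complementary subsurfaces, combined with the fact that unmeasured foliation spaces of surfaces of different genus are not homeomorphic (Proposition~\ref{prop:Sk} and Corollary~\ref{cor:not}). Until you replace your characterization of $\mathcal{D}$ and of the $\mathcal{D}_k$ by invariants of this kind (or prove that your proposed ones coincide with them), the invariance of $\mathcal{D}_k$ under $\mathrm{Homeo}(\mathcal{UMF})$ --- and hence the induced automorphism of $C(S)$ --- is not justified.
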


The proofs of Proposition \ref{prop:Papadopoulos2007} and of Theorem \ref{th:Ohshika2012} are based on the fact that the topology of the space $\mathcal{UMF}$ is non-Hausdorff, and that one can measure precisely the non-Hausdorffness of this space. The set $\mathcal{S}$ of homotopy classes of curves is naturally embedded in $\mathcal{UMF}$, and for each $n$, the set of systems of curves having $n$ components can be characterized by one of the levels of this non-Hausdorffness. This shows that this set has a topological significance and is invariant by any homeomorphism of $\mathcal{UMF}$. From this, we get an action of an arbitrary homeomorphism of  $\mathcal{UMF}$ on the curve complex, and we can use Theorem \ref{th:IKL} to conclude that this action is induced by a homeomorphism of the surface.

We shall give the important steps in the proofs of Proposition \ref{prop:Papadopoulos2007} and Theorem \ref{th:Ohshika2012}.

 We make use again of partial foliations (\S \ref{sectionPL}). If $F$ and $G$ are two partial measured foliations on $S$ with disjoint supports, then their union can be naturally considered as a (partial) measured foliation on $S$, which we shall denote by $F+G$.
We shall say that $F$ is a {\it subfoliation} of $F+G$ and that the foliation $F+G$ (or any foliation equivalent to $F+G$) {\it contains} the foliation $F$ (or any foliation equivalent to $F$).

Consider now two distinct equivalence classes of measured foliations $[F_1]$ and $[F_2]\in \mathcal{MF}$, represented by disjoint partial measured foliations $F_1$ and $F_2$. For each positive numbers $t_1\not= t_2$, the elements $t_1F_1 +F_2$ and $t_2F_1 +F_2$ represent distinct elements in $\mathcal{MF}$, but they represent the same elements in $\mathcal{UMF}$. For any sequence $t_n$ of positive real numbers converging to 0, the sequence $[t_nF_1 +F_2]$ in $\mathcal{MF}$ converges to the equivalence class $[F_2]$. On the other hand, the sequence  $[t_nF_1 +F_2]$ is  stationary in $\mathcal{UMF}$. Thus, every neighborhood of $[F_2]$ in $\mathcal{UMF}$ contains $[tF_1 +F_2]$. Note that the converse is not true, that is, there exist neighborhoods of the image of  $tF_1 +F_2$ in  $\mathcal{UMF}$ that do not contain the image of $F_2$ in that space.

 To express this property in a proposition, we first introduce some terminology. If $x$ and $y$ are two points in a (non-Hausdorff) topological space, let us say that \emph{$x$ is separated from $y$} if there exists a neighborhood of $x$ that does not contain $y$.

From the preceding remarks, we have the following:

\begin{proposition} \label{non-h} The space $\mathcal{UMF}$ satisfies the following properties:
\begin{enumerate} 
\item  \label{e11}  it is not Hausdorff;
\item  \label{e12} the relation ``$x$ is separated from $y$" is not symmetric.
 \end{enumerate}
\end{proposition}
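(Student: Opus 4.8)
The plan is to exhibit two specific distinct points of $\mathcal{UMF}$ that witness both assertions at once, reusing the computation of the paragraph preceding the statement. Fix two nonempty partial measured foliations $F_1,F_2$ with disjoint supports and $[F_1]\neq[F_2]$, and set $x=[F_2]$ and $y=[F_1+F_2]$ in $\mathcal{UMF}$. Two preliminary observations organize the whole argument. Write $\pi\colon\mathcal{MF}\to\mathcal{UMF}$ for the quotient map, which is continuous by definition of the quotient topology. First, for each $\gamma\in\mathcal{S}$ the function $i(\cdot,\gamma)\colon\mathcal{MF}\to\mathbb{R}_+$ is continuous, being a coordinate of the embedding $\mathcal{MF}\hookrightarrow\mathbb{R}_+^{\mathcal{S}}$, and the condition $i(\cdot,\gamma)>0$ depends only on the topological type of the foliation; hence $\{i(\cdot,\gamma)>0\}$ is an open saturated subset of $\mathcal{MF}$, so its image under $\pi$ is open in $\mathcal{UMF}$. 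Second, since $F_1$ and $F_2$ have disjoint supports, the intersection function is additive: $i(tF_1+F_2,\gamma)=t\,i(F_1,\gamma)+i(F_2,\gamma)$ for all $t>0$ and all $\gamma$.

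To separate the roles of $x$ and $y$ I would produce a curve $\gamma_0$ detecting $F_1$ but not $F_2$. Put $\Sigma=S\setminus\mathrm{Supp}(F_2)$; this is an essential subsurface containing $\mathrm{Supp}(F_1)$, and $F_1$ is a nonempty element of $\mathcal{MF}(\Sigma)$. By the injectivity of the intersection pairing applied on $\Sigma$, there is an essential, non-peripheral simple closed curve $\gamma_0$ of $\Sigma$ with $i(F_1,\gamma_0)>0$; realized in the interior of $\Sigma$ it is disjoint from $\mathrm{Supp}(F_2)$, so $i(F_2,\gamma_0)=0$ and, by additivity, $i(F_1+F_2,\gamma_0)>0$. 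Since positivity of the intersection number is invariant under the equivalence defining $\mathcal{UMF}$, this already shows $x\neq y$.

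For part (\ref{e11}) I would show $x$ and $y$ cannot be separated by disjoint open sets. For any sequence $t_n\searrow 0$, additivity gives $i(t_nF_1+F_2,\gamma)\to i(F_2,\gamma)$ for every $\gamma$, so $[t_nF_1+F_2]\to[F_2]$ in $\mathcal{MF}$, while $\pi([t_nF_1+F_2])=y$ for all $n$ because the underlying topological foliation is unchanged. Hence, if $U$ is any neighborhood of $x$, then $\pi^{-1}(U)$ is a neighborhood of $[F_2]$ in $\mathcal{MF}$, so it contains $[t_nF_1+F_2]$ for $n$ large, whence $y\in U$. Every neighborhood of $x$ therefore contains $y$, and $x,y$ admit no disjoint neighborhoods, so $\mathcal{UMF}$ is not Hausdorff. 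The same computation shows that $x$ is \emph{not} separated from $y$. For part (\ref{e12}) it then suffices to check that $y$ \emph{is} separated from $x$: the set $O=\pi\big(\{i(\cdot,\gamma_0)>0\}\big)$ is open in $\mathcal{UMF}$ by the first observation, contains $y$ since $i(F_1+F_2,\gamma_0)>0$, and omits $x$ since $i(F_2,\gamma_0)=0$. Thus the relation ``$x$ is separated from $y$'' fails to be symmetric.

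The only step that is more than formal manipulation is the construction of the detecting curve $\gamma_0$, and even this reduces to applying Thurston's injectivity theorem on the complementary subsurface $\Sigma$. I expect this to be the main (mild) obstacle: one must ensure the curve can be taken disjoint from $\mathrm{Supp}(F_2)$, which is precisely why I work inside $\Sigma$ rather than invoking injectivity on $S$ directly, where a detecting curve for $F_1$ might be forced to cross the support of $F_2$. Everything else is bookkeeping with the quotient topology and the explicit convergent sequence $[t_nF_1+F_2]$.
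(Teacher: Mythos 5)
Your proof is correct and follows essentially the same route as the paper: the paper derives the proposition from the immediately preceding paragraph, using exactly your pair $x=[F_2]$, $y=[F_1+F_2]$ and the observation that the sequence $[t_nF_1+F_2]$ converges to $[F_2]$ in $\mathcal{MF}$ while being stationary in $\mathcal{UMF}$. The one place where you go beyond the text is welcome rather than divergent: the paper merely asserts that $y$ admits a neighborhood omitting $x$, whereas you actually exhibit one as the image of the saturated open set $\{i(\cdot,\gamma_0)>0\}$ for a detecting curve $\gamma_0$ chosen in the complement of $\mathrm{Supp}(F_2)$ (and since only one witnessing pair is needed, any residual edge case in choosing $\gamma_0$ can be avoided by picking $F_1,F_2$ to be, say, annular foliations around disjoint non-isotopic curves).
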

Both properties are used in an essential way in this theory. In fact, it is by analyzing precisely the non-Hausdorffness property that Proposition \ref{prop:Papadopoulos2007} is proved. The non-symmetry of points separation property  is used to complete the proof of Theorem \ref{th:Ohshika2012}.

The proof of Proposition \ref{prop:Papadopoulos2007} involves the notions of \emph{adherence}, \emph{adherence set}, \emph{complete adherence set} and {\it adherence number}.  These are purely topological notions  and they were introduced in \cite{Papadopoulos2007} in the setting of spaces of equivalence classes of measured foliations.

Let $X$ be a topological space.

\begin{definition}[Adherence] \label{def:adherence}
Let $x$ and $y$ be two points in $X$.
We say that $x$ is {\it adherent to} $y$ in $X$ if every neighborhood of $x$ intersects every neighborhood of $y$.
\end{definition}

The relation ``$x$ is adherent to $y$" is symmetric in $x$ and $y$.
\begin{definition}[Adherence set]
Let $x$ be a point in $X$.
The {\it adherence set} of $x$ is the set of elements in $X$ which are adherent to $x$.
\end{definition}

\begin{definition}[Complete adherence set]
A subset $Y$ of $X$ is a {\it complete adherence set} in $X$ if for any two elements $x$ and $y$ of $Y$, $x$ is adherent to $y$ in $X$. \end{definition}

\begin{definition} Let $x$ be a point in $X$. The
{\it adherence number} $\mathcal{N}(x)$ of $x$ is the element of $\mathbb{N}\cup\{\infty\}$ defined as
\[\mathcal{N}(x)=\sup\{ \mathrm{Card}(A)\ \vert \ x\in A\text{ and } A \text{ is a complete adherence set in $X$}\}.\]
\end{definition}
 
Now we study these notions in the context of the non-Hausdorff space $\mathcal{UMF}$.

We denote by $\sim$ the equivalence relation between measured foliations that define the elements of $\mathcal{MF}$.
 
\begin{lemma}\label{lem:int} Let $F$ and $G$ be two measured foliations on $S$. The following are equivalent:
\begin{enumerate}
\item\label{prop:int1} $i(F,G)= 0$.
\item\label{prop:int2} $F\sim F'$ and $G\sim G'$, where $F'$ and $G'$ are partial measured foliations on $S$ such that $F'=F_1+F_2$, $G'=G_1+G_2$,   $F_1$ and $G_1$ are equal as topological foliations, and  $F_2$ and $G_2$ have disjoint supports. (Some of the partial foliations $F_1$, $F_2$, $G_1$ and $G_2$ may be empty.)
\item \label{prop:int3}  $[F]$ is in the adherence set of $[G]$ in $\mathcal{UMF}$.
\end{enumerate}

\end{lemma}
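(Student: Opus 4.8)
The statement to prove is Lemma 3.32 (the \texttt{lem:int} lemma), establishing the equivalence of three conditions concerning two measured foliations $F$ and $G$: vanishing intersection number $i(F,G)=0$; a decomposition into a shared topological part plus disjointly-supported remainders; and $[F]$ lying in the adherence set of $[G]$ in $\mathcal{UMF}$.

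\medskip

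The plan is to prove the equivalence as a cycle, roughly $(\ref{prop:int2})\Rightarrow(\ref{prop:int1})\Rightarrow(\ref{prop:int2})$ and then $(\ref{prop:int2})\Leftrightarrow(\ref{prop:int3})$, since the geometric decomposition in $(\ref{prop:int2})$ is the natural bridge between the analytic condition $(\ref{prop:int1})$ and the topological condition $(\ref{prop:int3})$. First I would establish $(\ref{prop:int2})\Rightarrow(\ref{prop:int1})$, which is the easy direction: if $F$ and $G$ can be represented as $F_1+F_2$ and $G_1+G_2$ with $F_1,G_1$ topologically identical and $F_2,G_2$ of disjoint support, then one computes that the infimum defining $i(F,G)$ is achieved by arcs running along common leaves, so $i(F,G)=0$. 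The shared foliation $F_1=G_1$ contributes nothing to the intersection (curves can be homotoped into its leaves), and the disjoint supports of $F_2$ and $G_2$ mean their contributions never cross. For the reverse $(\ref{prop:int1})\Rightarrow(\ref{prop:int2})$, I would invoke the standard structure theory of measured foliations on surfaces (as developed in \cite{FLP}): when $i(F,G)=0$, one can realize $F$ and $G$ simultaneously, up to Whitehead equivalence and isotopy, by partial foliations whose leaves do not cross transversally. Decomposing the supports into the locus where the two foliations coincide as topological (unmeasured) foliations and the locus where they are disjoint yields exactly the desired partial foliations $F_1,F_2,G_1,G_2$.

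\medskip

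The equivalence $(\ref{prop:int2})\Leftrightarrow(\ref{prop:int3})$ is where the non-Hausdorff phenomenon from Proposition \ref{non-h} enters directly, and this is the step I expect to be the main obstacle. In the direction $(\ref{prop:int2})\Rightarrow(\ref{prop:int3})$, the idea is to use the explicit non-Hausdorff mechanism discussed just before the lemma: given the decomposition, one builds a common approximating sequence. Concretely, writing $F'=F_1+F_2$ and $G'=G_1+G_2$ with $F_1=G_1$ topologically, the families $t F_2 + F_1$ and $s G_2 + G_1$ (for small $t,s$) have arbitrarily small measured pieces $F_2,G_2$ added to the shared core $F_1=G_1$, so that in the quotient $\mathcal{UMF}$ every neighborhood of $[F]$ and every neighborhood of $[G]$ both accumulate on the class of the shared core; hence they intersect, giving adherence. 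The technical care needed here is to verify that the quotient topology on $\mathcal{UMF}$ really does make these neighborhoods overlap, which requires understanding how the collapsing of the $\mathbb{R}_+^*$-scaling and the forgetting of the transverse measure interact near $[F_1]$.

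\medskip

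For the converse $(\ref{prop:int3})\Rightarrow(\ref{prop:int2})$, which I regard as the genuinely delicate part, the strategy is contrapositive or direct via convergence sequences. If $[F]$ is adherent to $[G]$ in $\mathcal{UMF}$, then there exist sequences of measured foliations $F_n\to F$ and $G_n\to G$ in $\mathcal{MF}$ whose images in $\mathcal{UMF}$ agree, i.e. $F_n$ and $G_n$ are topologically equivalent for each $n$. Passing to the limit and analyzing which topological features survive, one extracts a common topological subfoliation (the part where $F$ and $G$ share leaves, giving $F_1=G_1$) while the remaining pieces must have disjoint support (otherwise the surviving transverse crossings would force $i(F,G)>0$ and contradict adherence via the lower semicontinuity of the intersection function). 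The hard point is controlling the limiting behavior of the unmeasured classes: one must rule out the creation of spurious transverse intersections in the limit and confirm that topological equivalence of the $F_n,G_n$ forces the clean decomposition on the nose. Here I would lean on the compactness and continuity properties of $i(\cdot,\cdot)$ together with the partial-foliation normal forms, essentially reducing the argument to the already-established equivalence $(\ref{prop:int1})\Leftrightarrow(\ref{prop:int2})$ once $i(F,G)=0$ is derived from adherence.
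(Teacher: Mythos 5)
The paper itself states this lemma without proof (it is a survey; the result is imported from \cite{Papadopoulos2007}), so I am judging your sketch on its merits against the mechanism the surrounding text describes. Your overall architecture is the expected one: prove (\ref{prop:int2})$\Rightarrow$(\ref{prop:int1}) by additivity of the intersection form over disjointly supported pieces, (\ref{prop:int1})$\Rightarrow$(\ref{prop:int2}) by the structure theory of pairs of foliations with vanishing intersection, and obtain (\ref{prop:int3})$\Rightarrow$(\ref{prop:int2}) by first deriving $i(F,G)=0$ from adherence. Those parts are fine as a sketch, with the caveat that in the last step the claim that adherence in the quotient produces sequences $F_n\to F$, $G_n\to G$ in $\mathcal{MF}$ with $[F_n]=[G_n]$ in $\mathcal{UMF}$ is not automatic (convergence in a quotient does not lift to convergence of representatives, and sets of the form $\{H: i(H,G)<\epsilon\}$ are not saturated for the topological-equivalence relation); that is where the real work of that direction sits.

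The genuine gap is in (\ref{prop:int2})$\Rightarrow$(\ref{prop:int3}): your non-Hausdorff mechanism points the wrong way. You shrink $F_2$ and $G_2$ toward the shared core and conclude that ``every neighborhood of $[F]$ and every neighborhood of $[G]$ both accumulate on the class of the shared core; hence they intersect.'' What the families $[F_1+tF_2]$ and $[G_1+sG_2]$ actually show is that every neighborhood of the \emph{third} point $[F_1]$ contains both $[F]$ and $[G]$ (these are constant sequences in $\mathcal{UMF}$ converging to $[F_1]$). That says nothing about whether an arbitrary neighborhood of $[F]$ meets an arbitrary neighborhood of $[G]$, which is what adherence requires; the paper explicitly warns that the containment cannot be reversed (neighborhoods of $[tF_1+F_2]$ need not contain $[F_2]$ --- see the discussion preceding Proposition \ref{non-h} and the asymmetry recorded there). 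The correct argument goes through the \emph{union}, not the core: since the supports of $F_1$, $F_2$, $G_2$ are pairwise disjoint, set $H=F_1+F_2+G_2$. The family $F_1+F_2+tG_2$ converges to $F'$ in $\mathcal{MF}$ as $t\to 0$ while its image in $\mathcal{UMF}$ is constantly $[H]$, so every neighborhood of $[F]$ contains $[H]$; symmetrically, $G_1+G_2+sF_2$ shows every neighborhood of $[G]$ contains $[H]$ (here one uses that $G_1$ and $F_1$ are equal as topological foliations, so the two unions have the same class in $\mathcal{UMF}$). Any two such neighborhoods therefore meet at $[H]$, which is adherence, and is consistent with the description of adherence sets in Proposition \ref{prop:0}. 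As written, your step would not survive the very asymmetry of separation that the lemma is designed to exploit.
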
   

The following proposition is a direct consequence of the equivalence (\ref{prop:int1})$\Leftrightarrow$(\ref{prop:int2}) of Lemma \ref{lem:int}.

\begin{proposition}[Proposition 3.2 of \cite{Papadopoulos2007}]\label{prop:0} Let $F$ be a measured foliation and let $[F]$ be its image in $\mathcal{UMF}$. Then, the adherence set of $F$ is the set of equivalence classes in $\mathcal{UMF}$ of foliations $G$ which are of the form $G_1+G_2$ where $G_1$ is a sum of components of $F$ and where $G_2$ is a partial measured foliation whose support is disjoint from the support of a representative of $F$ by a partial measured foliation.
\end{proposition}

\begin{proposition}[Proposition 3.3 of  \cite{Papadopoulos2007}]\label{prop:max} Let $F$ be a measured foliation on $S$ and let $[F]$ denote the corresponding element of $\mathcal{UMF}$. Then, 
$\mathcal{N}([F])=2^q -1$, where $q$ is the maximum, over all measured foliations $G$ containing $F$, of the number of components of $G$.
\end{proposition}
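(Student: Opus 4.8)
The plan is to translate the purely topological notion of adherence number into a statement about disjointness of foliations, and then to count. First I would use Lemma \ref{lem:int}: the equivalence (\ref{prop:int1})$\Leftrightarrow$(\ref{prop:int3}) says that $[G]$ lies in the adherence set of $[F]$ precisely when $i(F,G)=0$. Consequently a subset $A\subset\mathcal{UMF}$ is a complete adherence set if and only if its members can be represented by measured foliations that are pairwise disjoint, i.e. have pairwise vanishing geometric intersection number. Thus computing $\mathcal{N}([F])$ amounts to finding the largest family of classes in $\mathcal{UMF}$, one of which is $[F]$, that can be realized pairwise disjointly.

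For the lower bound $\mathcal{N}([F])\geq 2^q-1$, I would choose a measured foliation $G$ containing $F$ with the maximal number $q$ of components and write $G=C_1+\cdots+C_q$ as the disjoint sum of its components. Since $G$ contains $F$ and the two supports are disjoint, $F$ is the sum of some sub-collection of the $C_i$, so $[F]$ is one of the sub-sums. For each nonempty $I\subseteq\{1,\dots,q\}$ set $G_I=\sum_{i\in I}C_i$. Distinct subsets give foliations with distinct supports, hence distinct classes in $\mathcal{UMF}$, while any two sub-sums agree on their common components and are otherwise disjoint, so $i(G_I,G_J)=0$ for all $I,J$. Hence $\{[G_I]:\emptyset\neq I\subseteq\{1,\dots,q\}\}$ is a complete adherence set of cardinality $2^q-1$ containing $[F]$.

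For the upper bound $\mathcal{N}([F])\leq 2^q-1$, let $A$ be any complete adherence set containing $[F]$. By the reduction above its members are pairwise disjoint, and by Proposition \ref{prop:0} each is built from sums of components of $F$ together with foliations whose support is disjoint from that of $F$. The crucial step is to realize the entire finite family simultaneously disjointly: passing to measured geodesic laminations for an auxiliary hyperbolic metric, pairwise vanishing intersection means the supports pairwise fail to cross, and a finite union of pairwise non-crossing geodesic laminations is again a geodesic lamination. Let $C_1,\dots,C_m$ be its minimal components, endow each with a positive transverse measure, and set $G^*=C_1+\cdots+C_m$. Then $G^*$ contains $F$, because $[F]\in A$, so by maximality $m\leq q$; and each element of $A$, being a subfoliation of $G^*$, equals $[\sum_{i\in I}C_i]$ for a unique nonempty $I\subseteq\{1,\dots,m\}$. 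Since distinct members of $A$ yield distinct index sets, $\mathrm{Card}(A)\leq 2^m-1\leq 2^q-1$, which together with the lower bound gives $\mathcal{N}([F])=2^q-1$.

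The main obstacle is precisely this simultaneous realization in the upper bound: vanishing intersection is a priori only a pairwise condition, and one must know that finitely many pairwise disjoint measured foliations can be placed in a common disjoint position, so that they all become subfoliations of a single foliation whose number of components is bounded by $q$. I would settle this through the geodesic lamination model, in which disjointness of supports is manifestly compatible across the whole family; the remaining bookkeeping---matching up minimal components, transferring the transverse measures, and checking that the component count of $G^*$ is controlled by $q$---is then routine.
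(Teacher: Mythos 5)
Your argument is correct, and since the survey states this proposition without proof (citing Proposition 3.3 of \cite{Papadopoulos2007}), the natural comparison is with the machinery it does set up: your route through the equivalence (\ref{prop:int1})$\Leftrightarrow$(\ref{prop:int3}) of Lemma \ref{lem:int}, followed by decomposing a maximal pairwise-disjoint family into the minimal components of a single common (geodesic-lamination) support, is exactly the intended argument, and you correctly identify the only real crux, namely upgrading the pairwise condition $i=0$ to a simultaneous disjoint realization. The remaining bookkeeping you defer is indeed routine, the one point worth making explicit being that components must be taken pairwise non-isotopic (parallel annular pieces merged) so that distinct sub-sums give distinct points of $\mathcal{UMF}$ in the lower bound.
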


We shall call a foliation on $S$ representing an element of  $\mathcal{UMF}$ which is the image of some element of $\mathcal{S}'$ by the map $j$ an {\it annular foliation}.

\begin{proposition}[Proposition 3.4 of  \cite{Papadopoulos2007}]\label{prop:nb}
If $x\in\mathcal{UMF}$ is the equivalence class of an annular foliation, then $\mathcal{N}(x)= 2^q-1$, with $q=3g-3$. Furthermore, if
$y\in\mathcal{UMF}$ is not the equivalence class of an annular foliation, 
 then $\mathcal{N}(y)< \mathcal{N}(x)$.
\end{proposition}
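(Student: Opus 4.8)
The plan is to reduce everything to one combinatorial fact about the number of components of a measured foliation and to extract both assertions from Proposition~\ref{prop:max}. By that proposition, $\mathcal{N}([F])=2^{q}-1$, where $q$ is the maximal number of components of a measured foliation $G$ containing $F$. Hence it suffices to prove the following. \emph{Key Lemma:} for every measured foliation $G$ on the closed surface $S=S_g$ with $g\ge 2$, the number of components of $G$ is at most $3g-3$, with equality if and only if $G$ is annular (all its components are cylinders, so that $G$ is a weighted system of $3g-3$ disjoint curves forming a pants decomposition).

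Granting the Key Lemma, I would argue as follows. If $x$ is annular, represent it by a system of $k\le 3g-3$ disjoint non-isotopic curves and complete this system to a pants decomposition $c_1,\dots,c_{3g-3}$; the foliation $G$ consisting of cylinders on all the $c_i$ contains $F$ and has exactly $3g-3$ components, so $q=3g-3$ and $\mathcal{N}(x)=2^{3g-3}-1$. If instead $y$ is not annular, its canonical form has at least one minimal (non-cylindrical) component $C$; since containment only adds foliations with disjoint support, every $G$ containing $y$ still has $C$ as a component and therefore has a non-cylindrical component. By the equality clause of the Key Lemma, such a $G$ has at most $3g-4$ components, so $q\le 3g-4$ and $\mathcal{N}(y)=2^{q}-1<2^{3g-3}-1=\mathcal{N}(x)$, which is the required strict inequality.

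To prove the Key Lemma I would put $G$ in Thurston's canonical form, with minimal components supported on disjoint non-annular subsurfaces $Y_1,\dots,Y_b$ (each with $\chi(Y_i)\le -1$ and complexity $\xi(Y_i)\ge 1$) and cylinder components with cores $\gamma_1,\dots,\gamma_a$, the whole configuration being \emph{reduced} so that the boundary curves $\partial Y_i$ and the cores $\gamma_j$ are pairwise non-isotopic. The number of components is then $a+b$. For each $Y_i$ choose an essential, non-boundary-parallel curve $c_i\subset Y_i$ (possible since $\xi(Y_i)\ge 1$), taken generically so as to avoid the finitely many isotopy classes of the $\partial Y_j$. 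Then $\Gamma'=\{\gamma_1,\dots,\gamma_a\}\cup\{c_1,\dots,c_b\}$ is a system of $a+b$ pairwise disjoint, pairwise non-isotopic essential curves, whence $a+b\le 3g-3$. For the equality clause, suppose $b\ge 1$ and consider $Y_1$: its boundary $\partial Y_1$ is nonempty, essential, disjoint from $\Gamma'$, and, by reducedness and the choice of the $c_i$, non-isotopic to every curve of $\Gamma'$. Hence $\partial Y_1$ sits in the interior of the complementary piece $Q$ of $S\setminus\Gamma'$ containing it, as an essential, non-peripheral curve. A pair of pants contains no such curve, so $Q$ is not a pair of pants, $\Gamma'$ is not a maximal multicurve, and therefore $a+b<3g-3$. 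This proves the lemma.

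The hard part is the foundational bookkeeping behind the phrase ``number of components,'' namely establishing the reduced canonical form and, above all, fixing the convention that makes the equality clause correct: one must ensure that a cylinder of closed leaves running parallel to the boundary of a minimal component is \emph{not} counted as a separate component (equivalently, that in the reduced form no cylinder core is isotopic to the boundary of a minimal component). Without this normalization one can produce, already on the genus-two surface, a foliation with a minimal component on a one-holed torus together with two cylinders and naively count $3=3g-3$ components, which would wreck the ``only if'' direction. Thus the real content is a careful analysis of the canonical form guaranteeing that each minimal component genuinely consumes more of the complexity budget $3g-3$ than a cylinder does; the curve-counting of the previous paragraph then closes the argument. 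The details of this canonical form and of the intersection-number characterization of components are precisely those developed in \cite{Papadopoulos2007} and in \cite{FLP}.
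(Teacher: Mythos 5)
The survey states this proposition without proof (it is quoted from \cite{Papadopoulos2007}), so your argument can only be judged on its own terms. Your reduction via Proposition \ref{prop:max} and your proof of the inequality $q\le 3g-3$ (the curve system $\Gamma'$ built from the cylinder cores together with one non-peripheral essential curve in each minimal piece) are correct and are surely the intended route. The gap is in the equality clause of your Key Lemma, and it is not the benign bookkeeping issue your last paragraph suggests. The ``reduced'' normalization you assume --- that no cylinder core is isotopic to a boundary component of a minimal piece --- cannot in general be arranged: on the closed surface of genus $2$, take a minimal foliation on a one-holed torus $Y$, a cylinder whose core is $\partial Y$, and a cylinder on a non-separating curve of the complementary one-holed torus. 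This is a legitimate partial measured foliation with three components, one of them minimal and non-annular, and no isotopy or Whitehead move removes the middle cylinder or absorbs it into $Y$. Your strictness argument breaks exactly where you invoke it: the boundary curve of $Y$ that you want to adjoin to $\Gamma'$ is already present in $\Gamma'$ as a cylinder core.

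More seriously, the repair you propose --- decreeing that such a cylinder ``is not counted as a separate component'' --- cannot rescue the argument, because $\mathcal{N}$ is defined by the topology of $\mathcal{UMF}$ and is blind to counting conventions. In the configuration above the three pieces have pairwise vanishing intersection number, so by Lemma \ref{lem:int} the seven nonempty sub-sums are pairwise adherent, and they are pairwise distinct in $\mathcal{UMF}$; they therefore form a complete adherence set of cardinality $2^{3}-1$ containing the (non-annular) class of the full foliation, which forces its adherence number to be at least $2^{3g-3}-1$ for $g=2$. So either the precise notion of ``component'' underlying Proposition \ref{prop:max} must exclude such cylinders --- in which case the formula $\mathcal{N}=2^{q}-1$ has to be re-derived with the modified count, and the seven-element adherence set just exhibited shows this is not automatic --- or the strict inequality must be established by a finer topological invariant than the adherence number of the single point. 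Either way, what you defer to ``a careful analysis of the canonical form'' in \cite{Papadopoulos2007} and \cite{FLP} is the entire difficulty of the proposition, and your argument does not close it.
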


We denote by $j:\mathcal{S}' \hookrightarrow \mathcal{UMF}$ the natural inclusion induced from the map that we considered in Section \ref{sectionPL} which associates to each weighted system of curves the corresponding annular measured foliation.

  The following corollaries follow from Proposition \ref{prop:nb} and of the fact that any homeomorphism of $\mathcal{UMF}$ preserves adherence numbers of points.

\begin{corollary}\label{co}
 Any homeomorphism of $\mathcal{UMF}$ preserves the image of $\mathcal{S}'$  in $\mathcal{UMF}$ by the map $j$. 
 \end{corollary} 
 
\begin{corollary}\label{cor:not}
If $g\not= g'$, then the two spaces $\mathcal{UMF}(S_{g})$ and $\mathcal{UMF}(S_{g'})$ are not homeomorphic.
\end{corollary}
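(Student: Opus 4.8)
The plan is to derive Corollary \ref{cor:not} directly from Proposition \ref{prop:nb}, which is the essential tool. The key observation is that the maximal adherence number realized by any point of $\mathcal{UMF}(S_g)$ is a topological invariant of the space, and Proposition \ref{prop:nb} computes this maximum explicitly in terms of the genus. First I would note that, by Proposition \ref{prop:nb}, the points of $\mathcal{UMF}(S_g)$ that realize the largest adherence number are exactly the classes of annular foliations, and for such a point $x$ one has $\mathcal{N}(x)=2^{3g-3}-1$, while every non-annular class $y$ satisfies $\mathcal{N}(y)<\mathcal{N}(x)$. Hence the quantity
\[
M(S_g)=\sup_{x\in\mathcal{UMF}(S_g)}\mathcal{N}(x)=2^{3g-3}-1
\]
is attained and depends only on $g$.

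The next step is to use that a homeomorphism of topological spaces preserves the adherence relation, and therefore preserves adherence numbers: if $h:\mathcal{UMF}(S_g)\to\mathcal{UMF}(S_{g'})$ is a homeomorphism, then $\mathcal{N}(h(x))=\mathcal{N}(x)$ for every $x$, since $h$ carries neighborhoods to neighborhoods and thus carries complete adherence sets to complete adherence sets of the same cardinality. Taking the supremum over all points, I would conclude that $M(S_g)=M(S_{g'})$, that is, $2^{3g-3}-1=2^{3g'-3}-1$. Because the function $g\mapsto 2^{3g-3}-1$ is strictly increasing in $g$, this forces $g=g'$, proving the contrapositive of the statement. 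Equivalently, if $g\neq g'$ then the two suprema differ and no homeomorphism can exist.

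The only point requiring care is the bookkeeping on which surfaces the formula $2^{3g-3}-1$ is valid, since Proposition \ref{prop:nb} is stated for the genus-$g$ closed case where the maximal number of components of a multicurve is $3g-3$; I would restrict attention to the closed surfaces $S_g$ (consistent with the notation $\mathcal{UMF}(S_g)$ used in the corollary) where this count is exactly $3g-3$. Given that restriction, there is essentially no obstacle: the argument is a clean invariance-plus-monotonicity argument. If one wished to extend the statement to punctured surfaces, the main subtlety would be that the maximal cardinality of a system of curves depends on both $g$ and $n$ through the formula $3g-3+n$, so the topological invariant $M$ would only separate surfaces with distinct values of $3g-3+n$; but for the closed case as stated, the strict monotonicity in $g$ alone suffices.
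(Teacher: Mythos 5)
Your proof is correct and is essentially the paper's own argument: the paper derives Corollary \ref{cor:not} precisely from Proposition \ref{prop:nb} together with the observation that homeomorphisms preserve adherence numbers, so the maximal adherence number $2^{3g-3}-1$ is a topological invariant that is strictly increasing in $g$. Your added remark about the bookkeeping for punctured surfaces (where the relevant count becomes $3g-3+n$) is a sensible clarification but does not change the route.
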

 
As in \S \ref{sectionPL}, for every $k\geq 1$, we denote by of $\mathcal{S}_k$ the subset of $\mathcal{S}'$ consisting of homotopy classes that are representable by pairwise disjoint and non-homotopic $k$ simple closed curves.  

\begin{proposition}[Proposition 3.7 of \cite{Papadopoulos2007}]\label{prop:Sk}
For any $k\geq 1$, any homeomorphism of $\mathcal{UMF}$ preserves the image of $\mathcal{S}_k$ by the map $j:\mathcal{S}'\to\mathcal{UMF}$.
\end{proposition}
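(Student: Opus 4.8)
The plan is to refine Corollary~\ref{co}, which already shows that any homeomorphism $h$ of $\mathcal{UMF}$ preserves the set $\mathcal{D}=j(\mathcal{S}')$, into a statement that also keeps track of the number of components. The idea is to recover, purely from the adherence relation (which is topological and hence preserved by $h$), the partial order on $\mathcal{D}$ given by inclusion of systems of curves; the number of components will then be the \emph{height} of an element in this poset.

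First I would record what the adherence set of a point of $\mathcal{D}$ is. Writing $A(x)$ for the adherence set of a point $x$, Lemma~\ref{lem:int} gives, for a system $C$ with annular class $j(C)$, that $A(j(C))=\{[G]\in\mathcal{UMF}\ |\ i(G,C)=0\}$. For two systems $C$ and $C'$ I then define $j(C')\preceq j(C)$ to mean $A(j(C))\subseteq A(j(C'))$, i.e. every foliation disjoint from $C$ is disjoint from $C'$. Since $h$ carries adherence sets to adherence sets ($h(A(x))=A(h(x))$), the homeomorphism $h$ restricts on $\mathcal{D}$ to an automorphism of the relation $\preceq$.

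The key step is to identify $\preceq$ with the subsystem relation: $j(C')\preceq j(C)$ if and only if $C'$ is isotopic to a subsystem of $C$. The implication $C'\subseteq C \Rightarrow A(j(C))\subseteq A(j(C'))$ is immediate from $i(G,C)=\sum_{c} i(G,c)$ over the components $c$ of $C$. For the converse I would argue contrapositively: if some component $c$ of $C'$ is not a component of $C$, I produce a measured foliation $G$ with $i(G,C)=0$ but $i(G,c)>0$, so that $A(j(C))\not\subseteq A(j(C'))$. If $c$ crosses $C$, take $G$ to be a component of $C$ that $c$ meets essentially. If $c$ is disjoint from $C$ but not isotopic into it, then the component of $S\setminus C$ carrying $c$ is neither an annulus nor a pair of pants, hence contains a curve $d$ crossing $c$; this $d$ is disjoint from $C$ and serves as $G$. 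This existence statement is exactly where the hypothesis on $S$ (excluding the sphere with at most four punctures and the torus with at most two punctures) is needed, to guarantee that enough disjoint-but-crossing curves exist, and I expect it to be the main obstacle. The same construction also shows $\preceq$ is antisymmetric, so it is a genuine partial order.

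Granting this identification, $(\mathcal{D},\preceq)$ is order-isomorphic to the poset of systems of curves under inclusion. Consequently $\mathcal{S}_1$ is exactly the set of $\preceq$-minimal elements (the single curves), and, inductively, $j(C)\in\mathcal{S}_k$ if and only if the longest chain in $\mathcal{D}$ ending at $j(C)$ has exactly $k$ elements, namely a strictly increasing sequence of subsystems $j(\{c_1\})\prec j(\{c_1,c_2\})\prec\cdots\prec j(C)$. Height is an invariant of $(\mathcal{D},\preceq)$, and since $h$ preserves this poset structure it preserves height; therefore $h$ preserves each set $j(\mathcal{S}_k)$, which is the assertion of the proposition.
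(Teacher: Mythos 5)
Your proof is correct, but it takes a genuinely different route from the paper's. The paper also starts from Corollary~\ref{co} and then distinguishes $j(\mathcal{S}_m)$ from $j(\mathcal{S}_n)$ for $m\neq n$ by comparing the adherence sets \emph{as topological spaces}: by Proposition~\ref{prop:0} the adherence set of an element of $j(\mathcal{S}_k)$ is a finite union of copies of unmeasured foliation spaces of the complementary subsurfaces, and Corollary~\ref{cor:not} (non-homeomorphic $\mathcal{UMF}$'s for different genera) then shows these adherence sets cannot be exchanged by a homeomorphism. You instead use only the \emph{containment relation} among adherence sets of points of $\mathcal{D}$, identify it with the subsystem partial order via the intersection-number characterization of adherence (Lemma~\ref{lem:int}), and read off the number of components as the height in this poset. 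Your key converse step — producing, when $C'$ is not a subsystem of $C$, a curve $d$ with $i(d,C)=0$ and $i(d,c)>0$ for some component $c$ of $C'$ — is sound; note only that the existence of such a $d$ in the second case needs nothing beyond $c$ being essential and non-peripheral in its complementary component, so the global hypothesis on $S$ is not really what is at stake there (it is needed elsewhere, e.g.\ for Theorem~\ref{th:IKL}). Your approach buys something the paper's does not: it avoids the genus-detection Corollary~\ref{cor:not} (whose statement for disconnected complements is the most delicate point of the paper's argument) and it recovers the entire poset of systems of curves under inclusion, which is exactly the simplicial structure of $C(S)$ used in the subsequent step of the rigidity proof; the paper's argument is shorter once Corollary~\ref{cor:not} is granted, but it proves less along the way.
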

\begin{proof}
Let $f$ be a homeomorphism of $\mathcal{UMF}$. By Corollary \ref{co}, $f$ preserves the subset $j(\mathcal{S}')$ of  $\mathcal{UMF}$. Now $j(\mathcal{S}')$ is the disjoint union of the spaces $j(\mathcal{S}_k)$ with  $k=1,\ldots 3g-3$. Thus, it suffices to prove that if $m\not=n$ and if $[F]\in j(\mathcal{S}_m)$ and $[G]\in j(\mathcal{S}_n)$, we have  $f([F])\not=[G]$. 
By Proposition \ref{prop:0}, the adherence set $\mathcal{A}([F])$ of $[F]$ (respectively $\mathcal{A}([G])$ of $[G]$) in $\mathcal{UMF}$ is homeomorphic to a finite union of spaces which are all homeomorphic to a space $\mathcal{UMF}(S')$ (respectively  $\mathcal{UMF}(S'')$) where $S'$ and  $S''$ are (not necessarily connected) subsurfaces of $S$ 
which are the complement of the support of partial  foliations $F$  and $G$ representing $[F]$ and $[G]$ respectively. Since the number of components of $[F]$ and $[G]$ are distinct, then the genera of $S'$ and $S''$ are different.  By Corollary \ref{cor:not},  $\mathcal{A}([F])$ is not homeomorphic to $\mathcal{A}([G])$. Thus, $f$ cannot send $[F]$ to $[G]$, which completes the proof of the proposition. 
\end{proof}

\begin{proof}[Proof of Proposition \ref{prop:Papadopoulos2007}.]
 The fact that $\mathcal{D}$ is dense in $\mathcal{UMF}$ and is invariant by the group $\mathrm{Homeo}(\mathcal{UMF})$ follows from the corresponding property in $\mathcal{MF}$. We prove the rest of Part (\ref{PP1}) of this proposition. 

Let $h$ be an arbitrary homeomorphism of $\mathcal{UMF}$. Since $h$ preserves the set $j(\mathcal{S}_1)=j(\mathcal{S})$, it induces a map from the vertex set $j(\mathcal{S})$ of $C(S)$ to itself. Since $h$ is a homeomorphism, this induced map is bijective. Furthermore, since, for each $k\geq 2$, $h$ preserves the set $j(\mathcal{S}_k)$ in $\mathcal{UMF}$, this action on the vertex set of $C(S)$ extends naturally to a simplicial automorphism $h'$ of $C(S)$.  If $S$  is not the closed surface of genus two, it follows from   Theorem \ref{th:IKL} that the automorphism $h'$ is induced by an element $h''$ of $\Gamma^*(S)$. 
 The element $h''$ acts on $\mathcal{UMF}$, and it is easy to see from the construction that the action induced on $\mathcal{D}$ by this map is the same as that of $h$. This completes the proof Part (\ref{PP1}). 
 
The proofs of Parts (\ref{PP2}) and  (\ref{PP3}) follow from the fact that if $S$ is not the closed surface of genus two, the natural homomorphism from the extended mapping class group to the curve complex is injective (see Theorem \ref{th:IKL}) and that in the case where $S$ is the closed surface of genus two, the kernel of that homomorphism is $\mathbb{Z}_2$.  
 
 \end{proof}

In the rest of this section, we prove Theorem \ref{th:Ohshika2012}. The proof we give is the one of Ohshika \cite{Ohshika2012} who introduced the following asymmetric version of Definition \ref{def:adherence}:

\begin{definition}[Unilateral adherence] \label{def:unilateral-adherence} If $x$ and $y$ are two points in a topological space $X$, we say that $x$ is \emph{unilaterally adherent} to $y$ if $x\not=y$ and every neighborhood of $x$ contains $y$.
\end{definition}
We already saw that in $\mathcal{UMF}$ this relation is not symmetric.
In fact, we have the following:
\begin{lemma}[Lemma 1 of \cite{Ohshika2012}] \label{lem:uni} Let $x$ and $y$ be two distinct elements in $\mathcal{UMF}$. Then, the following are equivalent:
\begin{enumerate}
\item \label{lem-uni1} $x$ and $y$ can be represented by partial measured foliations $F$ and $G$ respectively in such a way that $G$ is a subfoliation of $F$ and is distinct from $F$;
\item \label{lem-uni2}  $x$ is unilaterally adherent to $y$.
\end{enumerate}
\end{lemma}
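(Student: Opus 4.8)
The plan is to prove the two implications of the equivalence separately, using the quantitative description of non-Hausdorffness supplied by Lemma \ref{lem:int} and Proposition \ref{prop:0}, and reading the unilateral adherence of (\ref{lem-uni2}) as the assertion that the class $y$ lies in the closure of the singleton $\{x\}$ in $\mathcal{UMF}$ (equivalently, that the constant $\mathcal{UMF}$-sequence equal to $x$ converges to $y$), with $x\neq y$.

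For the implication (\ref{lem-uni1})$\Rightarrow$(\ref{lem-uni2}), I would represent $x$ by a partial measured foliation $F$ and $y$ by a subfoliation $G$ of $F$ distinct from $F$, so that $F=G+H$ with $H$ a nonempty partial foliation whose support is disjoint from that of $G$. The idea is to shrink the transverse measure on the extra piece $H$: for $s\in(0,1]$ set $F_s=G+sH$, so that $F_1=F$. Every $F_s$ with $s>0$ is topologically identical to $F$ and hence represents $x$ in $\mathcal{UMF}$, whereas in $\mathcal{MF}$ the additivity of the intersection function on disjoint supports gives $i(F_s,\gamma)=i(G,\gamma)+s\,i(H,\gamma)\to i(G,\gamma)$ for every $\gamma\in\mathcal{S}$, so that $[F_s]\to[G]$ in $\mathcal{MF}$ as $s\to 0^+$. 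Since the quotient map $\mathcal{MF}\to\mathcal{UMF}$ is continuous, the constant $\mathcal{UMF}$-sequence equal to $x$ converges to $y$, i.e. $y$ lies in the closure of $\{x\}$; as $H$ is nonempty we have $x\neq y$, and this is exactly the unilateral adherence asserted in (\ref{lem-uni2}). This is the same mechanism as in the computation preceding Proposition \ref{non-h}, now carried out with $G$ playing the role of the persistent part and $H$ that of the vanishing part.

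For the converse (\ref{lem-uni2})$\Rightarrow$(\ref{lem-uni1}), I would assume that $x$ is unilaterally adherent to $y$. In particular $x$ is adherent to $y$ in the symmetric sense, so by the equivalence (\ref{prop:int3})$\Leftrightarrow$(\ref{prop:int1}) of Lemma \ref{lem:int} we obtain $i(F,G)=0$ for representatives $F,G$ of $x,y$, and then by (\ref{prop:int1})$\Leftrightarrow$(\ref{prop:int2}) we may replace $F,G$ by equivalent representatives with $F=F_1+F_2$, $G=G_1+G_2$, where $F_1$ and $G_1$ coincide as topological foliations and $F_2,G_2$ have disjoint supports. It then remains to promote this decomposition into a genuine nesting, and the key point is that $G_2$ must be empty: if $G_2$ were nonempty then, in view of the description of adherence sets in Proposition \ref{prop:0}, collapsing the weights of $F_2$ in a representative of $x$ could only yield the class of $F_1=G_1$ and never the class of $G=G_1+G_2$, so $y$ would not lie in the closure of $\{x\}$, contradicting unilateral adherence. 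Hence $G\sim G_1=F_1$, while $F_2$ is nonempty (otherwise $F\sim F_1\sim G$ would give $x=y$); thus $G$ is equivalent to the proper subfoliation $F_1$ of $F=F_1+F_2$, which is precisely (\ref{lem-uni1}).

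The hard part will be this last step of the converse, namely the passage from the \emph{symmetric} information $i(F,G)=0$ furnished by Lemma \ref{lem:int} to the \emph{asymmetric} conclusion that the non-shared part lives entirely on the side of $x$. Lemma \ref{lem:int} is insensitive to the direction of the separation recorded in Proposition \ref{non-h}(\ref{e12}), so I expect the delicate point to be the rigorous justification that $y\in\overline{\{x\}}$ forces $G_2=\emptyset$. Making this precise amounts to re-running the neighborhood comparison behind Proposition \ref{non-h}, checking that a neighborhood basis at the collapsed (smaller) class does not control the vanishing directions while a basis at the larger class does; this controlled-versus-uncontrolled asymmetry is exactly the source of the one-sidedness and is what distinguishes the two classes $x$ and $y$.
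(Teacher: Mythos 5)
The paper states Lemma \ref{lem:uni} without proof (it is quoted from \cite{Ohshika2012}), so your proposal can only be checked against the surrounding material, and two things need saying. First, orientation: you read ``$x$ is unilaterally adherent to $y$'' as $y\in\overline{\{x\}}$, i.e.\ \emph{every neighborhood of $y$ contains $x$}. That is the reverse of Definition \ref{def:unilateral-adherence} as printed (``every neighborhood of $x$ contains $y$''), and you adopt it silently. Your reading is in fact the correct one: the computation preceding Proposition \ref{non-h} shows that for $G$ a proper subfoliation of $F$, every neighborhood of $[G]$ contains $[F]$ while some neighborhood of $[F]$ omits $[G]$, so under the printed definition the lemma would be false, and only your reading makes Lemma \ref{lem:uni}, Lemmas \ref{lem:Ohsh-curves} and \ref{lem:ad}, and the proof of Theorem \ref{th:Ohshika2012} mutually consistent. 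But this discrepancy must be flagged explicitly, not smoothed over. Granting it, your implication (\ref{lem-uni1})$\Rightarrow$(\ref{lem-uni2}) is correct and is exactly the paper's own mechanism: $F_s=G+sH$ lies in the fiber of $x$ for every $s>0$, $[F_s]\to[G]$ in $\mathcal{MF}$ by additivity of $i$ over disjoint supports, and continuity of the projection $\mathcal{MF}\to\mathcal{UMF}$ converts this into the required neighborhood statement.

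The converse, however, has a genuine gap at the step you yourself flag as hard, and the heuristic you offer does not close it. To get $G_2=\emptyset$ you argue that ``collapsing the weights of $F_2$ could only yield the class of $F_1$''; this presupposes that unilateral adherence is witnessed inside $\mathcal{MF}$, i.e.\ that $y\in\overline{\{x\}}$ in the quotient forces the fiber of $y$ to meet the $\mathcal{MF}$-closure of the fiber of $x$. Since neighborhoods of $y$ in $\mathcal{UMF}$ are precisely the saturated open sets containing its fiber, that presupposition is equivalent to what must be proved: when $G_2\neq\emptyset$ one must exhibit a saturated open set containing the fiber of $y$ and disjoint from the \emph{entire} fiber of $x$. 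Note that the most natural such set, $V_\gamma=\{z : i(z,\gamma)>0\}$ for a curve $\gamma$ with $i(G,\gamma)>0=i(F,\gamma)$, need not exist: take $S$ closed of genus $2$, $c$ a separating curve, $F$ a uniquely ergodic filling foliation of one of the two one-holed tori bounded by $c$, and $G$ the annular foliation around $c$. Then $i(F,G)=0$ and Lemma \ref{lem:int} applies with $F_1=G_1=\emptyset$, yet every curve with $i(G,\gamma)>0$ crosses $c$, hence meets the support of $F$ essentially and satisfies $i(F,\gamma)>0$. In this example separation holds for a different reason (the fiber of $x$ is a single closed, saturated ray in $\mathcal{MF}$, so its complement is the desired neighborhood of $y$), and in general the missing step amounts to identifying the $\mathcal{MF}$-closure of the fiber of $x$ with the union of the fibers of the classes of subfoliations of $F$ and checking that this union is saturated. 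Until that computation is carried out, (\ref{lem-uni2})$\Rightarrow$(\ref{lem-uni1}) is not established.
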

 
From Lemma \ref{lem:uni}, unilateral adherence in $\mathcal{UMF}$ can be translated into a relation of containment between foliations. This lemma also expresses the notion of unilateral adherence among foliations in terms of a set-theoretic property, which can be translated into a topological property in $\mathcal{UMF}$. This topological property is then used in the proof of Theorem  \ref{th:Ohshika2012} as an invariant property by homeomorphisms of $\mathcal{UMF}$.

\begin{lemma}[Lemma 2 of \cite{Ohshika2012}] \label{lem:Ohsh-curves} Let $x$ be an element of $\mathcal{UMF}$ represented by a partial measured foliation $F$ and let $F_n$ be a sequence of partial foliations representing elements of $\mathcal{UMF}$ and converging geometrically to $F$, in the sense that $F_n$ has the same support as $F$ for every $n$ and  the leaves of $F_n$ converge as unoriented line fields to the leaves of $F$. (Note that this implies that the image $[F_n]$ in $\mathcal{UMF}$ converges to $x$.) Let $y$ be an element of $\mathcal{UMF}$ such that $x$ is not unilaterally adherent to $y$. Then the sequence $[F_n]$ in $\mathcal{UMF}$ does not converge to $y$.
\end{lemma}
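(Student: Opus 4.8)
The plan is to prove the contrapositive: assuming that the sequence $[F_n]$ \emph{does} converge to $y$ in $\mathcal{UMF}$, I would show that $x$ is unilaterally adherent to $y$. Throughout I assume $x\neq y$, which is forced by Definition \ref{def:unilateral-adherence} and is the only case carrying content (for $y=x$ the conclusion fails, since $[F_n]\to x$). The first step exploits the non-Hausdorffness directly. Since the hypotheses give $[F_n]\to x$ and we are assuming $[F_n]\to y$, the sequence $[F_n]$ converges to the two points $x$ and $y$ simultaneously. A one-line check from Definition \ref{def:adherence} shows that whenever a sequence converges to two points, those points are adherent to each other: any neighborhood of $x$ and any neighborhood of $y$ both eventually contain the sequence, hence they meet. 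Thus $x$ and $y$ are adherent, and Lemma \ref{lem:int} (the equivalence (\ref{prop:int1})$\Leftrightarrow$(\ref{prop:int3})) translates this into $i(F,G)=0$, where $F$ and $G$ are chosen representatives of $x$ and $y$.

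Next I would unwind the vanishing of the intersection number through Proposition \ref{prop:0}: since $y$ lies in the adherence set of $x$, I may represent $y$ by a partial foliation of the form $G=G_{\mathrm{in}}+G_{\mathrm{out}}$, where $G_{\mathrm{in}}$ is a sum of components of $F$ (hence a subfoliation of $F$) and $G_{\mathrm{out}}$ is a partial measured foliation whose support is disjoint from $\mathrm{Supp}(F)$. The argument then splits according to whether $G_{\mathrm{out}}$ is empty. If $G_{\mathrm{out}}=\emptyset$, then $G=G_{\mathrm{in}}$ is a subfoliation of $F$, and it is a \emph{proper} subfoliation because $y\neq x$; Lemma \ref{lem:uni} then yields at once that $x$ is unilaterally adherent to $y$, which is exactly what we want.

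It remains to rule out $G_{\mathrm{out}}\neq\emptyset$, and this is the step where the geometric convergence hypothesis enters. Since $\mathrm{Supp}(G_{\mathrm{out}})$ is disjoint from $\mathrm{Supp}(F)$ and $G_{\mathrm{out}}$ is a nonempty measured foliation, I can choose an essential simple closed curve $\gamma$ lying in $\mathrm{Supp}(G_{\mathrm{out}})$ with $i(G_{\mathrm{out}},\gamma)>0$; because $\gamma$ is disjoint from $\mathrm{Supp}(F)$ while $G_{\mathrm{in}}$ is carried by $\mathrm{Supp}(F)$, additivity of intersection numbers over disjoint supports gives $i(G,\gamma)=i(G_{\mathrm{out}},\gamma)>0$, whereas $i(F,\gamma)=0$. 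The crucial point is that each $F_n$ has the \emph{same} support as $F$, so $\gamma$ is disjoint from $\mathrm{Supp}(F_n)$ and hence $i(F_n,\gamma)=0$ for every $n$. Consider now the set $W_\gamma=\{[H]\in\mathcal{UMF} : i(H,\gamma)>0\}$. Positivity of $i(\cdot,\gamma)$ is a topological property of a foliation, so $W_\gamma$ is saturated for the equivalence relation defining $\mathcal{UMF}$, and its preimage in $\mathcal{MF}$ is open by continuity of $i(\cdot,\gamma)$; hence $W_\gamma$ is an open neighborhood of $y$. But $[F_n]\notin W_\gamma$ for all $n$, contradicting $[F_n]\to y$. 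Therefore $G_{\mathrm{out}}$ must be empty, completing the contrapositive and the proof.

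I expect the main obstacle to be the clean construction of the separating curve $\gamma$ together with the verification that $W_\gamma$ is genuinely open in the quotient topology, i.e. that $\{i(\cdot,\gamma)>0\}$ is saturated. One should also confirm that a nonempty partial measured foliation $G_{\mathrm{out}}$ really admits a curve of positive intersection inside its own support; this is where one uses that $G_{\mathrm{out}}$ is not the empty class. All the remaining steps are short, and notably the very first step reduces the entire asymmetric statement to the symmetric adherence machinery of Lemma \ref{lem:int} and Proposition \ref{prop:0} already established above, with the asymmetry reappearing only through the equal-support condition on the $F_n$.
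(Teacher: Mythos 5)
Your reduction to the contrapositive, the observation that a sequence with two limits forces $x$ and $y$ to be adherent, and the use of Proposition \ref{prop:0} to write $G=G_{\mathrm{in}}+G_{\mathrm{out}}$ with the case $G_{\mathrm{out}}=\emptyset$ handled by Lemma \ref{lem:uni} are all sound. (The paper does not reproduce Ohshika's proof, so I am judging the argument on its own terms; note also that Definition \ref{def:unilateral-adherence} and Lemma \ref{lem:uni} in the survey are mutually inconsistent about which of $x,y$ is adherent to which --- you have implicitly, and correctly, adopted the convention of Lemma \ref{lem:uni}.)

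The gap is in Case 2. First, the curve you describe cannot in general be taken ``lying in $\mathrm{Supp}(G_{\mathrm{out}})$'': if $G_{\mathrm{out}}$ has an annular component, every essential simple closed curve contained in that annulus is isotopic to its core and has zero intersection with it. The natural repair is to take $\gamma$ anywhere in $S\setminus\mathrm{Supp}(F)$ with $i(G_{\mathrm{out}},\gamma)>0$, but this fails precisely in the delicate case: when a component of $G_{\mathrm{out}}$ is an annulus whose core $c'$ is isotopic to a boundary component of $\mathrm{Supp}(F)$ (more generally, peripheral in the complementary subsurface), every curve with $i(c',\gamma)>0$ must cross $\mathrm{Supp}(F)=\mathrm{Supp}(F_n)$ essentially, so $i(F_n,\gamma)>0$ and your open set $W_\gamma$ contains all the $[F_n]$. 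This configuration is not excluded by anything you have established: such a $y$ lies in the adherence set of $x$ by Proposition \ref{prop:0}, and $x$ is not unilaterally adherent to it. Worse, equality of supports alone genuinely cannot rule it out: one can build foliations $F_n$ supported exactly on $\mathrm{Supp}(F)$ that contain a thin boundary-parallel annular component in a collar of $\partial\,\mathrm{Supp}(F)$, and for such $F_n$ the class $[F_n]$ in $\mathcal{UMF}$ literally equals $[F+A_{c'}]$. What forbids this is the hypothesis that the leaves of $F_n$ converge to those of $F$ as line fields, which forces the behaviour of $F_n$ near the singular points of $F$ on $\partial\,\mathrm{Supp}(F)$ to be incompatible with a boundary-parallel annulus. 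Your argument never uses the line-field convergence --- only the support condition --- so it cannot close this case; this is where the real work of Ohshika's proof lies, and it is missing here.
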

 The following definitions were also introduced by Ohshika.

\begin{definition}[Adherence tower]
Given an element $x$ in $\mathcal{UMF}$, an \emph{adherence tower} for $x$ is a sequence $x=x_0,x_1,\ldots,x_m$ of elements in $\mathcal{UMF}$ such that for every $0\leq i\leq k-1$, $x_i$ is unilaterally adherent to $x_{i+1}$.
\end{definition}

\begin{definition}[Adherence height]
Given an element $x$ in $\mathcal{UMF}$, the \emph{adherence height} of $x$, denoted by $\mathrm{a.h.}(x)$, is the  the maximal integer $m$ such that $x=x_0,x_1,\ldots,x_m$ is an
adherence tower  for $x$.
\end{definition}

The following lemma expresses the fact that adherence height is a topological property, and this is used in the proof of Theorem \ref{th:Ohshika2012} that we give below.

\begin{lemma}\label{lem:ad-h}
For any homeomorphism $f$ of $\mathcal{UMF}$ and for any $x$ in this space, we have  $\mathrm{a.h.}(f(x)) = \mathrm{a.h.}(x)$.
\end{lemma}
 
The following lemma will also be used in the proof of Theorem \ref{th:Ohshika2012}. Its proof follows from Lemma \ref{lem:uni}.
\begin{lemma}[Lemma 3 of \cite{Ohshika2012}] \label{lem:ad}
Let $x$ and $y$ be two elements of $\mathcal{UMF}$. If $x$ is unilaterally adherent to $y$ then $\mathrm{a.h.}(y) < \mathrm{a.h.}(x)$.
\end{lemma}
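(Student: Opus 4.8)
The plan is to exploit the dictionary, furnished by Lemma \ref{lem:uni}, between unilateral adherence and proper containment of foliations, and then to argue that any adherence tower for $y$ can be lengthened by placing $x$ on top of it. Concretely, I would first record that adherence heights in $\mathcal{UMF}$ are finite, so that the quantities $\mathrm{a.h.}(x)$ and $\mathrm{a.h.}(y)$ are well-defined integers, and then manufacture a tower for $x$ out of an optimal tower for $y$.

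For the finiteness, suppose $x_0, x_1, \ldots, x_m$ is any adherence tower, so that $x_i$ is unilaterally adherent to $x_{i+1}$ at each consecutive step. By Lemma \ref{lem:uni} I can represent each $x_i$ by a partial measured foliation $F_i$ in such a way that $F_{i+1}$ is a subfoliation of $F_i$ distinct from $F_i$; that is, $F_i = F_{i+1} + H_i$ for a nonempty partial foliation $H_i$ whose support is disjoint from that of $F_{i+1}$. Since $H_i$ carries at least one minimal component, passing from $F_i$ to $F_{i+1}$ strictly decreases the number of minimal components, so these numbers form a strictly decreasing sequence of nonnegative integers. As the number of components of a measured foliation on $S$ is bounded above (this is the content underlying Propositions \ref{prop:max} and \ref{prop:nb}), the length $m$ of any such tower is bounded, and hence $\mathrm{a.h.}(z)$ is a well-defined finite integer for every $z\in\mathcal{UMF}$.

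The main step is the lengthening argument. Let $m = \mathrm{a.h.}(y)$ and choose an adherence tower $y = y_0, y_1, \ldots, y_m$ realizing this maximum. Since by hypothesis $x$ is unilaterally adherent to $y = y_0$, the sequence $x, y_0, y_1, \ldots, y_m$ is again an adherence tower, now beginning at $x$ and of length $m+1$: its first step is exactly the relation that $x$ is unilaterally adherent to $y_0$, and the remaining steps are inherited verbatim from the tower for $y$. Consequently $\mathrm{a.h.}(x) \ge m+1 = \mathrm{a.h.}(y) + 1$, which yields the desired strict inequality $\mathrm{a.h.}(y) < \mathrm{a.h.}(x)$.

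The only delicate point, and the place where the geometry of $\mathcal{UMF}$ genuinely enters, is the finiteness claim; without it the inequality would be ill-posed, since the suprema defining the heights could both equal $\infty$. This is precisely where Lemma \ref{lem:uni} is indispensable: it guarantees that each unilateral-adherence step amounts to discarding a nonempty collection of components, so towers cannot be arbitrarily long. Once finiteness is secured, the remainder is the purely formal prepending of $x$, which is immediate from the definition of an adherence tower, so I do not expect any further obstacle.
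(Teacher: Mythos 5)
Your proof is correct and follows the same route the paper indicates: the paper gives no details beyond the remark that the lemma ``follows from Lemma \ref{lem:uni}'', and your argument --- securing finiteness of adherence heights via the dictionary of Lemma \ref{lem:uni} between unilateral adherence and proper containment (so that the number of components strictly decreases along a tower and is bounded on $S$), then formally prepending $x$ to a maximal tower for $y$ --- is exactly the intended fleshing-out. No gaps.
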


\begin{proof}[Proof of Theorem \ref{th:Ohshika2012}] Assume that $S$ is not the closed surface of genus 2 and let us show that the natural homomorphism $\Gamma^*(S) \to \mathrm{Homeo}(\mathcal{UMF}(S))$ is surjective. Let $h$ be a homeomorphism of $\mathcal{UMF}$. By Proposition \ref{prop:Papadopoulos2007}, there exists an element $h^*$ of $\Gamma^*(S)$ that has the same action on the natural image $\mathcal{D}$ in $\mathcal{UMF}$ of the set $\mathcal{S}'$ of systems of curves on $S$. We need to show now that the actions of $h$ and $h^*$ are the same on $\mathcal{UMF}$. 

Let $x$ be an element of $\mathcal{UMF}$ which is not in $\mathcal{D}$, let $F$ be any measured foliation representing $x$ and consider a sequence $F_n$ of (partial) measured foliations converging geometrically to $F$ in the sense of Lemma \ref{lem:Ohsh-curves}. 

Since $h$ and $h^*$ induce the same action on $\mathcal{D}$, the sequence $(h([C_i]))= h^*([C_i])$ converges in $\mathcal{UMF}$ to both $h(x)$ and $h^*(x)$. By Lemma \ref{lem:Ohsh-curves}, either $h^*(x)=h(x)$ or $h(x)$ is unilaterally adherent to $h^*(x)$. 
In the latter case, by Lemma \ref{lem:ad}, we have $\mathrm{a.h.}(h^*(x)) < \mathrm{a.h.}(h(x))$. By Lemma \ref{lem:ad-h}, we have  $\mathrm{a.h.}(h^*(x)) = \mathrm{a.h.}(x)= \mathrm{a.h.}(h(x))$, which is a contradiction. Thus, $h^*(x)=h(x)$. This proves that the homomorphism  $\Gamma^*(S) \to \mathrm{Homeo}(\mathcal{UMF}(S))$ is surjective.

The injectivity and the case  where $S$ is a closed surface of genus 2 are treated as in Proposition \ref{prop:Papadopoulos2007}.
\end{proof}
\subsection{The action on the reduced Bers boundary}\label{s:reduced}

In 1970, Bers introduced a compactification of Teichm\"uller space which is known today as the \emph{Bers compactification}  \cite{Bers1970}. It is obtained via an embedding of Teichm\"uller space into a space of holomorphic quadratic differentials of a given Riemann surface (the basepoint of Teichm\"uller spac) using the so-called simultaneous uniformization and the Schwarzian derivative. This embedding was shown by Bers to be complex analytic, and in fact it can be used as one way (among others) of equipping Teichm\"uller space with a complex structure.   
This compactification provides Teichm\"uller space with a boundary called the \emph{Bers boundary}. The definition of the Bers compactification uses the choice of the basepoint of Teichm\"uller space, and by a result of Kerckhoff and Thurston, the Bers boundary indeed depends on the choice of the basepoint. Furthermore, Kerckhoff and Thurston showed that the mapping class group does not extend continuously to an action on a reduced Bers boundary. 
  
 Ohshika examined a quotient of the Bers boundary -- the \emph{reduced Bers boundary} -- by an equivalence relation. He showed that the resulting boundary is independent of the choice of the basepoint, and that the action of the mapping class group on Teichm\"uller space extends continuously to the reduced Bers boundary.  He also obtained a rigidity result for the action of the mapping class group on the reduced Bers boundary (Theorem \ref{th:Ohshika1} below). 

The reduced Bers boundary is a subset of unmeasured foliation space, but (as Ohshika proved) its topology is different from that induced from the quotient topology considered in \S \ref{s:unmeasured}. There is a relation between the reduced Bers boundary and 3-manifold topology, in particular the Ending Lamination Conjecture. An ending lamination is itself an unmeasured lamination (which corresponds to an unmeasured foliation). We just recall here that the deformation space of geometrically finite hyperbolic structures on the interior of a compact hyperbolic 3-manifold $M$ is parametrized by the Teichm\"uller space $\mathcal{T}$ of its boundary (which is reduced to a point if the manifold has no boundary) and that in the case of geometrically infinite hyperbolic structures, an ending lamination is an invariant of such a structure on $M$. Provided the hyperbolic manifold has no parabolics, the ending lamination of a geometrically infinite end completely describes the end. We refer the reader to the paper \cite{Ohshika2011} and to the survey \cite{Ohshika-H} by Ohshika.

Ohshika proved in \cite{Ohshika2011}  that up to a few exceptional cases, any homeomorphism of a reduced Bers boundary is induced by a unique mapping class.
The proof is based again on an analysis of the non-Hausdorffness of the topology of the reduced Bers boundary, and on the introduction of a hierarchy in this non-Hausdorffness,  measured by adherence height. Thus, the proof is similar in spirit to the proofs of Theorems \ref{th:Ohshika2012} and \ref{main theorem}. In fact, adherence height is used to show that every homeomorphism of a Bers slice preserves the set of the so-called geometrically finite b-groups and the number of cusps, and from there, the action of a homeomorphism is shown to induce an action on the curve complex of the surface. The uniqueness part in the result is however more delicate and it uses work of Ohshika on Kleinian groups from his paper \cite{Ohshika-Div}.  

Denoting the reduced Bers boundary of the Teichm\"uller space of a surface $S$ by $\partial^{RB}(S)$, the result of Ohshika is as follows:

\begin{theorem}[Ohshika \cite{Ohshika2011} Theorem 4.1]\label{th:Ohshika1}
Let $S$ be a surface of finite type which is not the sphere with $\leq 4$ punctures or the torus with 0 or 1 hole. 
For any homeomrophism $f:\partial^{RB}(S)\to \partial^{RB}(S)$, there is a homeomorphism $h:S\to S$ 
whose induced action on $\partial^{RB}(S)$ is $f$. Furthermore, if $S$ is not the closed surface of genus 2, then 
any two diffeomorphisms of $S$ that induce the same action on $\partial^{RB}(S)$ are isotopic.  

\end{theorem}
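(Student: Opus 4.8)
The plan is to follow the strategy used in the proof of Theorem \ref{th:Ohshika2012}, transporting the adherence-theoretic machinery to the reduced Bers boundary $\partial^{RB}(S)$. First I would make the non-Hausdorff structure of $\partial^{RB}(S)$ explicit by describing its points in terms of end invariants: a point is determined by a (possibly empty) system of pinched curves together with the ending laminations carried by the complementary pieces. The \emph{geometrically finite b-groups}, which carry no ending lamination, are then parametrized by systems of curves, and should play here the role that the dense set $\mathcal{D}=j(\mathcal{S}')$ played in $\mathcal{UMF}$. I would define unilateral adherence and adherence height on $\partial^{RB}(S)$ exactly as in Definition \ref{def:unilateral-adherence} and the subsequent definitions of adherence tower and height, and then verify, by the same type of deformation argument as in Lemma \ref{lem:uni}, that unilateral adherence of $x$ to $y$ corresponds to $y$ being obtained from $x$ by forgetting part of its degeneration data, that is, to a containment relation between the underlying configurations.

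The key invariance statements are then the analogues of Lemma \ref{lem:ad-h} and Lemma \ref{lem:ad}: adherence height is preserved by any homeomorphism $f$ of $\partial^{RB}(S)$, and passing to a unilaterally adherent point strictly decreases it. From these I would deduce that the number of cusps of a geometrically finite b-group is a topological invariant and that the points of maximal adherence height are exactly the groups whose cusp system is a pants decomposition. Consequently $f$ preserves the set of geometrically finite b-groups and preserves, for each $k$, the subset corresponding to systems of $k$ curves. This yields a bijection of the vertex set of the curve complex $C(S)$ that sends simplices to simplices, hence a simplicial automorphism of $C(S)$; applying Theorem \ref{th:IKL} produces an extended mapping class $h$ inducing the same action as $f$ on this dense configuration set.

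To pass from agreement on the dense set to agreement on all of $\partial^{RB}(S)$, I would import the geometric convergence argument of Lemma \ref{lem:Ohsh-curves}: given an arbitrary boundary point $x$ approached by a sequence of geometrically finite b-groups $x_n$, both $f(x)$ and $h(x)$ are limits of the common values $f(x_n)=h(x_n)$, so either they coincide or one is unilaterally adherent to the other; the adherence-height invariance of the previous step excludes the latter alternative, giving existence of $h$.

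The hard part will be the uniqueness statement, that is, the rigidity of the homomorphism $\Gamma^*(S)\to \mathrm{Homeo}(\partial^{RB}(S))$. Unlike in the $\mathcal{UMF}$ case, where injectivity followed formally from injectivity of the induced action on $C(S)$, here the end-invariant data does not by itself pin down a mapping class, and one must show directly that a diffeomorphism acting trivially on $\partial^{RB}(S)$ is isotopic to the identity (up to the hyperelliptic involution in genus $2$). This is where Ohshika's analysis of Kleinian groups in \cite{Ohshika-Div} is needed: one exploits the fact that distinct mapping classes produce genuinely distinct degenerations of the associated hyperbolic structures, so they cannot induce the same action on the boundary. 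I expect this Kleinian-group input, rather than the curve-complex reduction, to be the main obstacle.
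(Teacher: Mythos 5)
Your proposal follows essentially the same route the paper outlines for Ohshika's theorem: exploiting the non-Hausdorffness of $\partial^{RB}(S)$ via adherence height to show that any homeomorphism preserves the geometrically finite b-groups and the number of cusps, reducing to an automorphism of the curve complex and invoking Theorem \ref{th:IKL}, and then handling the more delicate uniqueness statement by means of Ohshika's Kleinian-group results in \cite{Ohshika-Div}. You have correctly identified all the ingredients the paper attributes to the argument, including the point that uniqueness does not follow formally from the curve-complex reduction.
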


\begin{corollary}[Ohshika \cite{Ohshika2011} Corollary 4.9]\label{th:Ohshika2}
If $S_1$ and $S_2$ are two surfaces satisfying $\mathrm{dim}(\mathcal{T}(S_1))\not= \mathrm{dim}(\mathcal{T}(S_2))$, then the corresponding reduced boundaries $ \partial^{RB}(S_1)$ and $ \partial^{RB}(S_2)$ are not homeomorphic.
\end{corollary}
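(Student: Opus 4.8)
The plan is to extract from the reduced Bers boundary a purely topological invariant that detects the dimension of Teichm\"uller space, and then to observe that a homeomorphism is forced to preserve it. The natural candidate, exactly as in the proofs of Theorems \ref{th:Ohshika2012} and \ref{th:Ohshika1}, is the adherence height. Since $f:\partial^{RB}(S_1)\to\partial^{RB}(S_2)$ is assumed to be a homeomorphism, the analogue for the reduced Bers boundary of Lemma \ref{lem:ad-h} gives $\mathrm{a.h.}(f(x))=\mathrm{a.h.}(x)$ for every $x\in\partial^{RB}(S_1)$. Consequently the supremum
\[
H(S)\ :=\ \sup_{x\in\partial^{RB}(S)}\mathrm{a.h.}(x)
\]
is a topological invariant of $\partial^{RB}(S)$, and therefore $H(S_1)=H(S_2)$ whenever the two reduced boundaries are homeomorphic.

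The heart of the argument is the computation of $H(S)$ in terms of the topology of $S$. As recalled in the text, the non-Hausdorffness of $\partial^{RB}(S)$ is governed by the pinching of disjoint essential curves, equivalently by the appearance of new cusps in the corresponding geometrically finite $b$-groups. Each unilateral adherence step $x_i\rightsquigarrow x_{i+1}$ in an adherence tower corresponds to forgetting one pinched curve, in the sense of the containment relation of Lemma \ref{lem:uni} transported to $\partial^{RB}(S)$; by Lemma \ref{lem:ad} the heights then strictly decrease along the tower. The longest such tower therefore starts at a maximally pinched point, namely one whose system of pinched curves is a full pants decomposition, and removes the curves of that decomposition one at a time. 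This yields
\[
H(S_{g,n})\ =\ 3g-3+n\ =\ \dim_{\mathbb{C}}\mathcal{T}(S_{g,n}),
\]
so that $H(S)=\tfrac12\dim_{\mathbb{R}}\mathcal{T}(S)$ faithfully records the dimension of Teichm\"uller space. Combining the two steps, if $\dim\mathcal{T}(S_1)\neq\dim\mathcal{T}(S_2)$ then $H(S_1)\neq H(S_2)$, which is incompatible with the existence of a homeomorphism between the two boundaries.

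I expect the main obstacle to lie in the second step, and more precisely in carrying out the adherence-height computation \emph{intrinsically} in the reduced Bers boundary rather than in $\mathcal{UMF}$. Since Ohshika showed that the topology of $\partial^{RB}(S)$ differs from the quotient topology inherited by $\mathcal{UMF}$ (see \S\ref{s:reduced}), one cannot simply invoke Propositions \ref{prop:max} and \ref{prop:nb}; one must instead verify, using the description of boundary points by their cusp systems and ending laminations, that unilateral adherence in $\partial^{RB}(S)$ is again equivalent to a containment relation on the associated pinched-curve systems. There is also a bookkeeping subtlety in whether the tower is allowed to descend to the $0$-cusp stratum (the interior Teichm\"uller points, which are not in the boundary) or terminates one step earlier; this affects the precise constant but not the conclusion, since $H(S)$ need only be recognized as a strictly monotone function of $3g-3+n$. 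Once this dictionary between unilateral adherence and curve-containment is established for the reduced Bers boundary, the dimension count, and hence the corollary, follows immediately.
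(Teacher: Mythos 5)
Your proposal is correct and follows exactly the route the paper indicates: the survey states this corollary without proof, but the surrounding discussion makes clear that Ohshika's argument rests on the adherence-height hierarchy of the non-Hausdorff topology, precisely the invariant you extract, and your argument is the reduced-Bers analogue of the proof of Corollary \ref{cor:not} for $\mathcal{UMF}$ via adherence numbers. You also correctly identify the one place where real work is needed — re-establishing the dictionary between unilateral adherence and containment of end invariants intrinsically in $\partial^{RB}(S)$, since the topology is not the one induced from $\mathcal{UMF}$ — and the off-by-one issue you flag is harmless because only strict monotonicity of the invariant in $3g-3+n$ is used.
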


\subsection{Homeomorphisms of the space of geodesic laminations}

In this section, we study the space  $\mathcal{GL}$  of geodesic laminations on a hyperbolic surface. This space was introduced in  Chapter 8 of Thurston's Princeton notes \cite{Thurston1979} which concerns mainly the theory of Kleinian groups. Hyperbolic surfaces equipped with their laminations arise in this theory through maps from surfaces into hyperbolic 3-manifolds and they give information on the structure of ends of these manifolds. This study is also at the origin of Thurston's Ending Lamination Conjecture which we already mentioned.

We shall consider the space $\mathcal{GL}$ equipped with the mapping class group action by homeomorphisms, and we shall review a rigidity theorem for this action (Theorem \ref{main theorem} below) due to Charitos, Papadoperakis and Papadopoulos. Before that, we shall recall the definition and some properties of the space $\mathcal{GL}$.

  $S=S_{g,n}$ is again a surface of finite type, of negative Euler characteristic, of genus $g\geq 0$ with $n\geq 0$ punctures. All the hyperbolic structures that we consider on $S$ which are complete and of finite area.  
We shall denote by the same letter $S$ the surface equipped with a hyperbolic structure.
 
A geodesic lamination on $S$ is said to be \emph{minimal} if it contains no proper sublamination. This condition is equivalent to the fact that every leaf of $\Lambda$ is dense in the support of $\Lambda$.

A geodesic lamination on $S$ either contains a finite number of leaves (and in this case it is said to be \emph{finite}), or it has uncountably many leaves.

A \emph{component} of a geodesic lamination $\Lambda$ is a minimal sublamination of $\Lambda$. 
For any geodesic lamination
$\Lambda$ on $S$,  the surface $S\setminus \Lambda$
has  finitely many connected components and the
completion of each connected component with respect to the metric induced by the metric of $S$ is a complete hyperbolic surface of finite area with geodesic boundary. This implies that each lamination has finitely many components.

A geodesic lamination is said to be \emph{maximal} if it is not a proper lamination of another lamination. Equivalently, each complementary component of the lamination is a hyperbolic ideal triangle. 

Any geodesic lamination can be rendered maximal by inserting into its complementary regions a  finite number of geodesics such that this component becomes a union of hyperbolic ideal triangles. The geodesics added can always be chosen to be open geodesics. One may also add closed geodesics, but this does not suffice for completing the lamination.

We equip the set $\mathcal{GL}(S)$ with the \emph{Thurston topology} (see \cite{CEG}, Def. I.4.1.10). We recall the definition.

\begin{definition}[Thurston topology on $\mathcal{GL}(S)$] The \emph{Thurston topology} on $\mathcal{GL}(S)$
is the topology with sub-basis the sets of the form
$$\mathcal{U}_{V}=\{\Lambda \in\mathcal{GL}(S):\Lambda\cap V\neq\emptyset\}$$
where $V$ varies over the collection of open subsets of $S$.  
\end{definition}
 
The topology
$\mathcal{T}$ does not satisfy the first axiom of separability. Indeed, consider a geodesic lamination $\Lambda$ which contains a strict sublamination $\Lambda_1\subsetneqq\Lambda$; then every open set for $\mathcal{T}$ containing $\Lambda_1$ contains $\Lambda$. In particular
the topology $\mathcal{T}$ is not Hausdorff.

 The original reference for the
topology $\mathcal{T}$ is Thurston's Notes \cite{Thurston1979}, Section 8.10, where it is referred to as the \textit{geometric topology}.  There is a natural map  from the space of equivalence classes of projective geodesic laminations with measures of full support (which is in natural one-to-one correspondence with the space $\mathcal{PMF}$ of projective measured foliations) to the space $ \mathcal{GL}$. This map $\mathcal{PML}\to \mathcal{GL}$ is continuous (Proposition 8.10.3 of \cite{Thurston1979}). Thurston uses this map to prove the following:

\begin{proposition}[Proposition 8.10.5 of \cite{Thurston1979}] \label{p:compact}  The space  $\mathcal{GL}(S)$, equipped with Thurston's topology, is compact.
\end{proposition}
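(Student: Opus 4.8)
The statement to prove is Proposition~8.10.5: the space $\mathcal{GL}(S)$, equipped with Thurston's topology $\mathcal{T}$, is compact. The plan is to exploit the continuous surjection-like map $\mathcal{PML}\to\mathcal{GL}$ together with the observation that every geodesic lamination is dominated by a maximal one, reducing compactness to a statement about limits of leaves in a fixed compact space.

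**Approach via a compact parameter space.** First I would fix a complete finite-area hyperbolic structure on $S$, so that leaves of laminations are genuine geodesics, each determined by its pair of endpoints on the circle at infinity $\partial_\infty\widetilde{S}\cong S^1$. The set of complete geodesics is then identified with a closed (hence compact) subset $G$ of the space of unordered pairs of distinct boundary points, and a geodesic lamination is a closed subset of $G$ that is a disjoint union of complete geodesics. The natural ambient object is therefore the space of closed subsets of $G$ with the Vietoris (Hausdorff) topology, which is compact because $G$ is compact metrizable. The Thurston topology on $\mathcal{GL}(S)$ is, by construction, a quotient/coarsening of this Hausdorff-type topology on closed geodesic subsets. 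I would make precise that a subbasic open set $\mathcal{U}_V=\{\Lambda:\Lambda\cap V\neq\emptyset\}$ is exactly the ``hitting set'' condition from the Vietoris topology, so that $\mathcal{T}$ is the lower Vietoris (hit) topology restricted to $\mathcal{GL}(S)$.

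**Key steps in order.** (1) Show that the family of all geodesic laminations, viewed as closed subsets of $G$, is a closed subspace of the compact space of closed subsets of $G$: the defining conditions (leaves are pairwise disjoint complete geodesics, the union is closed) are preserved under Hausdorff limits, using that a Hausdorff limit of a net of disjoint geodesics is again a geodesic or a union of geodesics that cannot cross, and that the limiting set remains closed. (2) Observe that the Thurston topology $\mathcal{T}$ is coarser than the Hausdorff topology, since every subbasic set $\mathcal{U}_V$ is open in the Hausdorff topology. (3) Invoke the general principle that a coarser topology on the image of a compact space is compact: since $\mathcal{GL}(S)$ with the Hausdorff topology is compact (being closed in a compact space) and the identity map from the Hausdorff topology to $\mathcal{T}$ is continuous (coarsening), the continuous image of a compact space is compact, so $(\mathcal{GL}(S),\mathcal{T})$ is compact.

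**Main obstacle and alternative route.** The delicate point is Step~(1): verifying that a Hausdorff limit of geodesic laminations is again a geodesic lamination, i.e.\ that the limit set is a disjoint union of complete geodesics with no transverse intersections. Disjointness is the crux, because a priori a limiting configuration could contain crossing geodesics; one rules this out using that two leaves of laminations in the sequence never cross, and crossing is an open condition on pairs of geodesics, so it cannot appear in the limit. An alternative, more intrinsic route, and the one Thurston's map suggests, is to use the continuous surjection onto the maximal laminations: every lamination is contained in a maximal one, maximal laminations correspond to ideal triangulations and form a compact set, and I would deduce compactness of $\mathcal{GL}(S)$ by exhibiting it as the image under the (continuous, by Proposition~8.10.3) containment relation from a compact space of maximal laminations, each maximal lamination carrying only finitely many sublaminations-up-to-the-relevant-data. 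In practice the Hausdorff-limit argument is cleaner and is the step I expect to require the most care, while the passage from Hausdorff-compactness to $\mathcal{T}$-compactness via coarsening is routine.
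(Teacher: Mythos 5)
Your main argument (Hausdorff/Chabauty compactness plus coarsening) is correct, but it is not the route the paper indicates: the survey follows Thurston, who deduces compactness directly from the continuity of the map $\mathcal{PML}\to\mathcal{GL}$ of Proposition 8.10.3. That argument runs as follows. The image of the compact space $\mathcal{PML}$ contains every \emph{minimal} geodesic lamination (a minimal lamination carries a transverse measure of full support), and every nonempty geodesic lamination contains a minimal sublamination. Now the special feature of the Thurston topology --- already recorded in the paper when it observes that every $\mathcal{T}$-open set containing $\Lambda_1\subsetneqq\Lambda$ contains $\Lambda$ --- is that open sets are upward closed under inclusion of laminations, since $\Lambda_0\cap V\neq\emptyset$ forces $\Lambda\cap V\neq\emptyset$ for each subbasic $\mathcal{U}_V$. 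Hence, given an open cover, a finite subcover of the compact image of $\mathcal{PML}$ already covers all of $\mathcal{GL}(S)$. This proof uses only the compactness of $\mathcal{PML}$ and the non-Hausdorffness of $\mathcal{T}$, whereas your route needs the independent (and not entirely trivial) fact that limits of laminations in the Hausdorff topology are laminations; in exchange, your route proves the stronger statement that $\mathcal{GL}(S)$ is compact even for the finer Hausdorff topology, which the Thurston-style argument cannot give.

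Two points in your write-up need repair. First, the space of unordered pairs of \emph{distinct} points of $\partial_\infty\widetilde{S}$ is not compact (the diagonal has been removed), and endpoint pairs of leaves of laminations are not uniformly separated; you should instead realize laminations as closed subsets of the compact surface $S$ itself (or of a fixed compact core when there are cusps, which laminations cannot enter), where the space of closed subsets is genuinely compact. Second, your ``alternative route'' via maximal laminations does not work as stated: the continuous map of Proposition 8.10.3 is defined on $\mathcal{PML}$ and its image consists of laminations supporting full-support measures, not of maximal laminations, and ``the image under the containment relation'' is not a continuous map from a compact space. The correct version of that idea is the minimal-sublamination argument described above.
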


Let us also note the following, which is used in the proof of Theorem \ref{main theorem}.
\begin{proposition}[Proposition 8.10.7 of  \cite{Thurston1979}] We have the following:
\begin{enumerate}
\item Geodesic laminations with finitely many leaves are dense in $\mathcal{GL}$.
\item Let $\Lambda\in \mathcal{GL}$ be a geodesic lamination with only finitely many leaves. Then,   each end of a non-compact leaf of $\Lambda$ either spirals around some closed geodesic or it converges to a cusp.
\end{enumerate}
\end{proposition}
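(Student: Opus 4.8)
The plan is to prove the two assertions in the opposite order, establishing the structural statement (2) first, since it describes precisely the objects that the density statement (1) must manufacture. Throughout I work in the fixed finite-area hyperbolic surface $S$, using that each leaf is a complete geodesic and that a lamination is a \emph{closed} subset of $S$. For (2), let $\Lambda$ be a finite geodesic lamination, $\ell$ a non-compact leaf, and $e$ one of its ends, parametrised as a ray $\ell(t)$, $t\to+\infty$. I would first invoke the standard dichotomy for geodesic rays in a cusped hyperbolic surface of finite area: either $\ell(t)$ eventually leaves every compact set, in which case the ray runs up a cusp neighbourhood and $e$ converges to a cusp; or $\ell(t)$ returns infinitely often to the compact core, so the $\omega$-limit set $L_e=\bigcap_{T}\overline{\ell([T,\infty))}$ is nonempty and compact. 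In the second case I would observe that $L_e\subset\Lambda$ (as $\Lambda$ is closed and contains $\ell$) and that $L_e$, being invariant under following geodesics, is a union of complete leaves of $\Lambda$; as a closed sublamination of the \emph{finite} lamination $\Lambda$ it is itself finite.

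The key structural input is that a minimal geodesic lamination with finitely many leaves must be a single closed geodesic: a minimal lamination that is not a closed geodesic meets any transversal in a Cantor set and so has uncountably many leaves (see \cite{CEG}). Choosing a minimal sublamination $\gamma\subset L_e$ therefore yields a \emph{closed geodesic} $\gamma$, a leaf of $\Lambda$ distinct from and hence disjoint from $\ell$, onto which $\ell$ accumulates along $e$. To upgrade ``accumulates'' to ``spirals'' I would pass to the annular cover of $S$ associated with $\gamma$: there $\gamma$ lifts to the core geodesic $\tilde\gamma$, and a lift of the $e$-ray is a geodesic disjoint from $\tilde\gamma$ accumulating on it, which in the annulus is forced to converge to one of the two endpoints of $\tilde\gamma$ at infinity, i.e. to spiral. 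This simultaneously forces $L_e=\gamma$ and completes the proof of (2).

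For (1) I would unwind the Thurston topology: a basic open set is a finite intersection $\bigcap_{i=1}^{n}\mathcal{U}_{V_i}$, so density of finite laminations is equivalent to the statement that whenever some $\Lambda_0\in\mathcal{GL}$ meets open sets $V_1,\dots,V_n$, some \emph{finite} lamination meets all of them. Picking $p_i\in\Lambda_0\cap V_i$ and $\varepsilon>0$ with each $\varepsilon$-ball about $p_i$ contained in $V_i$, the heart of the matter is a closing-up lemma: a long segment of a leaf of $\Lambda_0$ that returns close to itself through a common transversal can be closed off by a short transverse arc into a \emph{simple} closed curve, whose geodesic representative is a simple closed geodesic lying in an arbitrarily small neighbourhood of $\Lambda_0$ (simplicity coming from closing along \emph{innermost} returns, and closeness from the segment being a geodesic leaf). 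Applied inside a minimal component $\mu$, whose dense leaf passes near every $p_i$ lying on $\mu$, this produces a simple closed geodesic approximating $\mu$ and running through the relevant $V_i$.

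The remaining work, which I expect to be the main obstacle, is the simultaneous control of simplicity, disjointness, and the isolated leaves. Since the finitely many minimal components of $\Lambda_0$ sit in disjoint neighbourhoods, the closed geodesics built in distinct components are automatically disjoint; a point $p_i$ on an \emph{isolated} leaf $\ell$ (not contained in any minimal component) is handled by approximating the two minimal components onto which the ends of $\ell$ spiral by closed geodesics $\gamma_a^{*},\gamma_b^{*}$ and inserting a single geodesic $\ell^{*}$ that spirals onto them and passes through $V_i$ — an admissible finite lamination precisely by part (2). Keeping every piece inside a tie-neighbourhood of $\Lambda_0$ of width $<\varepsilon$ secures both mutual disjointness and that each inserted leaf or curve stays inside the prescribed $V_i$. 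The delicate points are arranging simplicity and disjointness together across all components and verifying the uniform closeness of the closed-up geodesic representatives; compactness of $\mathcal{GL}$ (Proposition~\ref{p:compact}) is not logically required here but reassures that these finite-lamination approximants accumulate only on genuine geodesic laminations.
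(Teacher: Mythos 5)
A preliminary caveat on the comparison: the paper does not prove this statement at all; it is quoted as Proposition 8.10.7 of Thurston's notes \cite{Thurston1979} and used as a black box in the proof of Theorem \ref{main theorem}. So your proposal can only be measured against the standard argument (Thurston's Section 8.10, or the treatments in \cite{CEG} and in Casson--Bleiler), and it does follow that route: for (2), the cusp-versus-recurrence dichotomy, the identification of the $\omega$-limit set of an end as a finite connected sublamination whose minimal pieces are closed geodesics, and the spiralling conclusion; for (1), unwinding the sub-basis of the Thurston topology, closing up long leaf segments of minimal components into simple closed geodesics, and inserting spiralling leaves for the isolated ones.

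The one step where your argument as written does not close is the passage from ``the end accumulates on $\gamma$'' to ``the end spirals onto $\gamma$'' in (2). There are two problems. First, in the annular cover of $\gamma$ the preimage of $\gamma$ is not just the core $\tilde\gamma$ but also the images of all the other $\pi_1(S)$-translates of the axis, so accumulation on $\gamma$ downstairs does not hand you a lift of the ray accumulating on $\tilde\gamma$ itself. Second, and more substantively, $\liminf_{t}d(r(t),\gamma)=0$ does not by itself rule out the ray making infinitely many deep excursions into a collar of $\gamma$, winding many times, and leaving again, with the close approaches occurring near a different lift of the axis each time; ``disjoint from and accumulating on a geodesic'' forces a shared ideal endpoint only once you know the approach is to a single fixed lift. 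The standard repair uses the simplicity of $\ell$: in Fermi coordinates on a one-sided collar of $\gamma$ the angular coordinate is strictly monotone along every geodesic (Clairaut) and the distance to the core is convex, and from this one checks that two returning arcs of a simple geodesic which each descend deep enough to wind at least once around the collar must cross each other (or must cross a leaf already spiralling onto $\gamma$). Hence at most one deep returning excursion can occur, a tail of the ray must therefore stay in the collar, and only then does your shared-endpoint argument apply and yield $L_e=\gamma$. With that lemma supplied, part (2) is complete; part (1) is then the standard density argument whose technical burden (simplicity and mutual disjointness of the closed-up geodesics, and the construction of the spiralling replacements $\ell^{*}$ for isolated leaves, whose ends may also run out cusps rather than spiral) you have correctly located but, as you acknowledge, not discharged.
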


  Any homeomorphism $h:S\rightarrow S$ induces a push-forward map
$h_{\ast}:\mathcal{GL}(S)\rightarrow\mathcal{GL}(S)$ which is a homeomorphism
for the topology $\mathcal{T}.$ Let $\mathrm{Homeo}(\mathcal{GL}(S))$ be the space $\mathcal{GL}(S)$ equipped with the Thurston topology.
In \cite{ChPP}, the following is proved:

\begin{theorem}[Charitos-Papadoperakis-Papadopoulos  \cite{ChPP}] \label{main theorem}
Assume that the surface $S$ is not a sphere with at most four punctures or a torus with at most two punctures.
Then the natural homomorphism 
\[\Gamma^*(S)\to \mathrm{Homeo}(\mathcal{GL}(S))
\]
 is an isomorphism. 

\end{theorem}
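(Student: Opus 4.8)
The plan is to follow the common scheme described in \S3. The first step is to read off the non-Hausdorffness as an order. As noted right after the definition of $\mathcal{T}$, if $\Lambda_1\subsetneq\Lambda$ then every $\mathcal{T}$-open set containing $\Lambda_1$ contains $\Lambda$; conversely, choosing a small disc $V$ meeting $\Lambda\setminus\Lambda_1$ shows that $\Lambda$ \emph{can} be separated from $\Lambda_1$. Hence the asymmetric relation ``every neighbourhood of $\Lambda_1$ contains $\Lambda$'' of Definition \ref{def:unilateral-adherence} recovers exactly the sublamination order $\Lambda_1\subsetneq\Lambda$, and this order is preserved by any homeomorphism of $\mathcal{GL}(S)$. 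In particular the closed points of $(\mathcal{GL}(S),\mathcal{T})$ are precisely the minimal laminations, namely the simple closed geodesics together with the minimal laminations carrying uncountably many leaves.

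Among the minimal laminations I would single out the simple closed geodesics by the order-theoretic complexity of the family of laminations lying above them, for instance the length of the longest chain $x=\Lambda_0\subsetneq\cdots\subsetneq\Lambda_m$ in the sublamination order, reflecting how many leaves can be inserted before reaching a maximal lamination whose complementary regions are ideal triangles. A simple closed geodesic has the smallest possible support, hence the most room above it, whereas a minimal lamination filling a proper subsurface admits strictly shorter chains. This should identify the set $\mathcal{V}$ of simple closed geodesics in purely topological terms, so that $\mathcal{V}$ is invariant under $\mathrm{Homeo}(\mathcal{GL}(S))$. Two elements of $\mathcal{V}$ are disjoint if and only if their union is a geodesic lamination, i.e. if and only if they admit a common upper bound in the sublamination order; and since the leaves of a lamination are pairwise disjoint, $k$ elements of $\mathcal{V}$ have a common upper bound exactly when they are pairwise disjoint. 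Thus the order recovers precisely the simplices of $C(S)$, and every $h\in\mathrm{Homeo}(\mathcal{GL}(S))$ induces a simplicial automorphism of $C(S)$.

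By Theorem \ref{th:IKL} this automorphism is induced by an element $\gamma\in\Gamma^*(S)$, so $h$ and the push-forward $\gamma_\ast$ agree on $\mathcal{V}$. I would then extend the agreement to the dense set of finite geodesic laminations (Proposition 8.10.7 of \cite{Thurston1979}): a finite lamination is built from its minimal sublaminations, namely closed geodesics, on which $h=\gamma_\ast$, together with finitely many bi-infinite leaves spiralling onto them, and this spiralling data is reflected in the finite poset of its sublaminations, which $h$ transports isomorphically; this should pin down $h$ on every finite lamination. Because $\mathcal{T}$ is non-Hausdorff, agreement on a dense set does not by itself give $h=\gamma_\ast$; as in Ohshika's proof of Theorem \ref{th:Ohshika2012} I would close the gap with a height argument. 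If $h(x)\neq\gamma_\ast(x)$, then, both being limits of one geometrically convergent sequence of finite laminations, one must be a proper sublamination of the other, which contradicts the equality of the (homeomorphism-invariant) chain-heights of $h(x)$ and $\gamma_\ast(x)$. This gives surjectivity.

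For injectivity, an element $\gamma$ acting trivially on $\mathcal{GL}(S)$ fixes every point of $\mathcal{V}$, hence acts trivially on $C(S)$, and by part (1) of Theorem \ref{th:IKL} this forces $\gamma=1$ for every surface in our range except the closed surface $S_{2,0}$. The subtlety peculiar to $\mathcal{GL}$, and the reason the conclusion carries no genus-$2$ exception unlike Theorems \ref{th:autPL} and \ref{th:Ohshika2012}, is that the hyperelliptic involution $\iota$, although it generates the kernel of $\Gamma^*(S_{2,0})\to\mathrm{Aut}(C(S_{2,0}))$ and acts trivially on $\mathcal{PML}$, nevertheless acts nontrivially on $\mathcal{GL}(S_{2,0})$: realised as a hyperbolic isometry it preserves every simple closed geodesic setwise but reverses orientation, so it moves a finite lamination consisting of a closed geodesic together with a leaf spiralling onto it from a prescribed side. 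Exhibiting such a lamination completes injectivity in genus $2$. I expect the main obstacle to be the characterization carried out in the second paragraph, that is, proving rigorously that the order-theoretic invariant separates simple closed geodesics from \emph{all} other minimal laminations on every surface in the allowed range, together with the non-Hausdorff propagation step, where ordinary continuity-and-density arguments fail.
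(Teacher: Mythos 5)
Your overall architecture --- converting the non-Hausdorffness of $\mathcal{T}$ into the sublamination order, using chain-length invariants to locate the simple closed geodesics, inducing an automorphism of $C(S)$, invoking Theorem \ref{th:IKL}, and then propagating the agreement from the dense set of finite laminations by a height argument as in Ohshika's proof of Theorem \ref{th:Ohshika2012} --- is exactly the paper's, which itself only sketches the argument and defers to \cite{ChPP}. Your identification of unilateral adherence with strict inclusion of laminations is correct (a basic Thurston-neighbourhood of $\Lambda_1$ contains $\Lambda$ for all such neighbourhoods iff $\Lambda_1\subseteq\Lambda$), as is the observation that if finite laminations converge to $x$ in the Hausdorff sense then every Thurston-limit of the sequence is a sublamination of $x$, so the final step closes exactly as in Theorem \ref{th:Ohshika2012} using the invariance of $\mathrm{length}(\Lambda)$ (Proposition \ref{length}). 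The paper organizes the detection of curves slightly differently: rather than measuring chains \emph{above} a minimal lamination, Proposition \ref{finite to finite} first pins down the maximal finite laminations containing pants decompositions via chains of \emph{sub}laminations and then extracts the laminations all of whose leaves are closed, together with their number of components; both routes rest on the same order invariant, and your version needs the Euler-characteristic count that you correctly flag as the main obstacle.

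The one step that does not survive as stated is the extension from $\mathcal{CGL}(S)$ to finite laminations. The finite poset of sublaminations of a finite lamination $\Lambda$ records only which closed leaves each isolated leaf accumulates on --- not the isotopy class of that leaf, nor the side or sense of its spiralling. Two finite laminations with the same closed-leaf part can have isomorphic sublamination posets without being equal, so knowing that $h$ preserves the order and agrees with $\gamma_\ast$ on $\mathcal{CGL}(S)$ does not yet determine $h(\Lambda)$. This is precisely why the paper isolates ``$h_\ast=f$ on $\mathcal{FGL}(S)$'' as a separate step whose details occupy a substantial part of \cite{ChPP}; the isolated leaves have to be detected by further order-theoretic data, for instance by recording which elements of $\mathcal{CGL}(S)$ admit a common upper bound with $\Lambda$ in the sublamination order, i.e.\ are disjoint from it. Your genus-$2$ remark --- that injectivity holds without exception because the hyperelliptic involution moves a closed geodesic together with a leaf spiralling onto it from a prescribed side --- is indeed why Theorem \ref{main theorem} has no genus-$2$ kernel, and it is a symptom of the same point: the spiralling data is exactly what the sublamination poset does not see, so it must be recovered by a separate argument.
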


The proof of Theorem \ref{main theorem} involves the construction, from a homeomorphism $f:  \mathcal{GL}(S)\to  \mathcal{GL}(S)$, of an automorphism of the complex of curves of $S$. One of the main steps in the proof given in \cite{ChPP} is to show that $f$ preserves the set of finite laminations, that is, geodesic laminations which  are finite unions of geodesics. The main idea in the proof is to translate some properties of the Thurston topology (in particular, non-Hausdorffness features) in terms of inclusions between laminations, and in then into set-theoretic properties (that is, topological properties of the space $\mathcal{GL}(S)$).

The following notion of a chain of sublaminations and its length is useful in the proof of the main theorem. It is a measure of the non-Hausdorffness of the space $\mathcal{GL}(S)$, and it should be compared to the notion of adherence tower and adherence height which is used in the proof of Theorem \ref{th:Ohshika1}.

\begin{definition}
Let $\Lambda\in\mathcal{GL}(S).$ A \emph{chain of sublaminations} of $\Lambda$ is a
finite sequence $(\Lambda_{i}),$ $i=0,1,..,n$ of sublaminations of $\Lambda$
such that $\emptyset\neq\Lambda_{n}\subsetneqq\Lambda_{n-1}\subsetneqq
...\subsetneqq\Lambda_{1}\subsetneqq\Lambda_{0}=\Lambda.$ We denote such a chain by
$\mathcal{C}_{\Lambda}.$ The integer $n$ is called the \emph{length} of
$\mathcal{C}_{\Lambda}$ and is denoted by $l(\mathcal{C}_{\Lambda}).$
A chain of sublaminations $\mathcal{C}_{\Lambda}$ is said to be \emph{maximal} if
its length is maximal among all chains of sublaminations of $\Lambda$.  The length of a maximal chain of sublaminations $\mathcal{C}_{\Lambda}$ of
$\Lambda$ depends only on $\Lambda$.
\end{definition}
 
 The number $l(\mathcal{C}
_{\Lambda})$ will be referred to as the \emph{length} of $\Lambda$ and will be denoted by
$\mathrm{length}(\Lambda).$

\begin{proposition}[Lemma 3.7 of \cite{ChPP}]
\label{length} Let $f$ be a homeomorphism of $\mathcal{GL}(S)$ and let $\Lambda_{n}\subsetneqq\Lambda_{n-1}\subsetneqq
...\subsetneqq\Lambda_{1}\subsetneqq\Lambda_{0}=\Lambda$ be a maximal chain of
sublaminations of $\Lambda.$ Then $f(\Lambda_{n})\subsetneqq f(\Lambda
_{n-1})\subsetneqq...\subsetneqq f(\Lambda_{1})\subsetneqq f(\Lambda
_{0})=f(\Lambda)$ is also a maximal chain of sublaminations of $f(\Lambda)$ and
$\mathrm{length}(\Lambda_{k})=\mathrm{length}(f(\Lambda_{k}))$ for each $k=0,1,..,n.$
\end{proposition}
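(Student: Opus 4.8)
The plan is to show that the inclusion relation between geodesic laminations is encoded intrinsically in the Thurston topology, as its specialization preorder, so that any homeomorphism of $\mathcal{GL}(S)$ must respect inclusions in both directions; the statement then follows formally.

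First I would prove the key lemma: for two geodesic laminations $\Lambda_1$ and $\Lambda$, one has $\Lambda_1\subseteq\Lambda$ if and only if every Thurston-open neighborhood of $\Lambda_1$ contains $\Lambda$. The forward implication is exactly the non-Hausdorffness observation recorded just after the definition of the topology: if $\Lambda_1\subseteq\Lambda$ and $\Lambda_1\in\mathcal{U}_V$, then $\Lambda_1\cap V\neq\emptyset$ forces $\Lambda\cap V\neq\emptyset$, so $\Lambda\in\mathcal{U}_V$; since the sets $\mathcal{U}_V$ form a sub-basis, every open set containing $\Lambda_1$ contains $\Lambda$. For the converse I would argue contrapositively: if $\Lambda_1\not\subseteq\Lambda$, choose a point $p$ of the support of $\Lambda_1$ not lying on $\Lambda$. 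Because $\Lambda$ is a closed subset of $S$, there is an open set $V\subseteq S$ with $p\in V$ and $V\cap\Lambda=\emptyset$; then $\mathcal{U}_V$ is an open neighborhood of $\Lambda_1$ (since $p\in\Lambda_1\cap V$) that does not contain $\Lambda$. This proves the lemma, and in particular exhibits ``$\Lambda_1\subseteq\Lambda$'' as the specialization preorder of $(\mathcal{GL}(S),\mathcal{T})$.

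Next I would observe that this preorder is a purely topological invariant, hence preserved by any homeomorphism $f$ together with its inverse: for all $A,B\in\mathcal{GL}(S)$ one has $A\subseteq B$ if and only if $f(A)\subseteq f(B)$. Since $f$ is a bijection, $A\neq B$ precisely when $f(A)\neq f(B)$, so strict inclusions are carried to strict inclusions. Applying this to the given chain $\Lambda_n\subsetneqq\cdots\subsetneqq\Lambda_0=\Lambda$ yields at once the chain $f(\Lambda_n)\subsetneqq\cdots\subsetneqq f(\Lambda_0)=f(\Lambda)$ of sublaminations of $f(\Lambda)$.

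Finally, for the claims about maximality and lengths, I would exploit the symmetry between $f$ and $f^{-1}$. Any chain of sublaminations of $f(\Lambda)$ pulls back under $f^{-1}$ to a chain of sublaminations of $\Lambda$ of the same length, so the maximal chain length of $f(\Lambda)$ cannot exceed that of $\Lambda$; the reverse inequality follows by applying the same reasoning to $f^{-1}$. Hence the image chain is maximal and $\mathrm{length}(\Lambda_k)=\mathrm{length}(f(\Lambda_k))$ for every $k$. The one step requiring genuine care is the converse direction of the key lemma: one must produce, from a point of $\Lambda_1$ lying off the support of $\Lambda$, a witnessing open set $V\subseteq S$, which relies essentially on the fact that a geodesic lamination is a closed subset of $S$, so its complement is open. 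Everything else is a formal consequence of identifying inclusion with the specialization order and of the fact that both $f$ and $f^{-1}$ are homeomorphisms.
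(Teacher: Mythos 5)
Your proof is correct and follows essentially the approach the paper indicates for this result: the paper states the proposition as Lemma 3.7 of the cited work and describes the key idea as translating the non-Hausdorffness of the Thurston topology into inclusion relations between laminations, which is precisely your identification of sublamination inclusion with the specialization preorder (the forward direction being the observation recorded immediately after the definition of the topology, and the converse using that a geodesic lamination is closed in $S$). The deduction of maximality and of the equality of lengths by symmetry between $f$ and $f^{-1}$ is the standard formal consequence and is carried out correctly.
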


A \emph{generalized pair of pants} is a hyperbolic surface which is homeomorphic to a sphere with three holes, a hole being either a geodesic boundary component or a cusp. 
 
The following proposition is also used in the proof of Theorem \ref{main theorem}.

\begin{proposition}[ Proposition 3.8 of \cite{ChPP}]
\label{finite to finite}
Let $f$ be a homeomorphism of $\mathcal{GL}(S)$ with
respect to the Thurston topology. Then:
\begin{enumerate}
\item \label{item:1}  The homeomorphism $f$ sends any maximal finite lamination which contains a collection of curves that decompose $S$ into generalized pair of pants to a maximal finite lamination that has the same property.
\item \label{item:2} The homeomorphism $f$ sends any laminations whose leaves are all closed to a lamination having the same property. Furthermore if such a
  lamination $\Lambda$ has $k$ components then $f(\Lambda)$ has also $k$ components.
  \item \label{item:3}  The homeomorphism $f$ sends finite laminations to finite laminations.
  \end{enumerate}
\end{proposition}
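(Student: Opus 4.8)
The plan is to reduce the entire proposition to a single order\nobreakdash-theoretic fact about the Thurston topology, and then read off the three statements from two invariants. First I would record that $f$ is an automorphism of the partially ordered set $(\mathcal{GL}(S),\subseteq)$. Indeed, from the sub\nobreakdash-basis defining the Thurston topology one checks that the closure of a single point is $\overline{\{\Lambda\}}=\{\Lambda_0 : \Lambda_0\subseteq\Lambda\}$: the lamination $\Lambda$ lies in every neighbourhood of $\Lambda_0$ precisely when $\Lambda$ meets every open $V\subseteq S$ that meets $\Lambda_0$, which, as laminations are closed subsets of $S$, means exactly $\Lambda_0\subseteq\Lambda$. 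Since any homeomorphism preserves the specialization preorder, $\Lambda_0\subseteq\Lambda$ if and only if $f(\Lambda_0)\subseteq f(\Lambda)$. This sharpens Proposition \ref{length}, and it immediately gives three $f$\nobreakdash-invariant quantities: the property of being \emph{minimal} (a $\subseteq$\nobreakdash-minimal nonempty element, equivalently $\mathrm{length}=0$), the \emph{number of components} of $\Lambda$ (the number of its $\subseteq$\nobreakdash-minimal nonempty sublaminations, finite by the structure theory), and $\mathrm{length}(\Lambda)$.

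Next I would prove Part (\ref{item:1}) by identifying its target class through these invariants. Using the structure theory of geodesic laminations (each $\Lambda$ is the union of its finitely many minimal components and finitely many isolated leaves, and a maximal lamination carries a fixed number of leaves, namely $6g-6$ on a closed surface), one shows that $\mathrm{length}(\Lambda)$ attains its global maximum $M$ exactly on the finite maximal laminations: one can add the leaves of such a lamination one at a time, whereas an infinite lamination is penalized because an uncountable minimal component contributes a single step to any chain while filling area that many leaves would fill in the finite case. Among finite maximal laminations the number of components is at most $3g-3+n$, with equality exactly when the closed leaves form a maximal multicurve, i.e. a pants decomposition into generalized pairs of pants. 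Hence the target class of Part (\ref{item:1}) is $\{\Lambda:\mathrm{length}(\Lambda)=M\}\cap\{\Lambda:\#\text{components}=3g-3+n\}$, and both conditions are $f$\nobreakdash-invariant.

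From Part (\ref{item:1}) I would extract, at least for a closed surface, a characterization of simple closed geodesics: a minimal $\mu$ is a simple closed geodesic if and only if it is a component of some lamination in the class of Part (\ref{item:1}), since the components of such a lamination are exactly its closed leaves and, conversely, every simple closed geodesic extends to a pants decomposition and then to a maximal finite lamination. This property is therefore $f$\nobreakdash-invariant. Part (\ref{item:3}) follows: by the structure theory $\Lambda$ is finite exactly when none of its components is an uncountable minimal lamination, i.e. when every component is a simple closed geodesic; as $f$ preserves components and this property, it preserves finiteness. Finally, for Part (\ref{item:2}) I would note that among the finite laminations so detected, a multicurve is one with no non\nobreakdash-closed leaf, and this is exactly the condition $\mathrm{length}(\Lambda)=\#\text{components}-1$, the sublamination poset then being the Boolean lattice on the components while any connecting leaf strictly raises the length. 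Since length and component number are $f$\nobreakdash-invariant, $f$ sends multicurves to multicurves preserving the number of components.

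The main obstacle is geometric rather than anything about $f$. One must establish rigorously, from the structure theory and Thurston's Proposition 8.10.7, that the global maximum of the length is realized precisely by the finite maximal laminations and that within this class the component count detects a pants decomposition; this needs the finiteness of the isolated leaves, the constancy of the leaf count of a maximal lamination, and a comparison showing that subdividing an uncountable minimal component into a multicurve together with completing leaves strictly increases the attainable chain length. The most delicate point appears only on surfaces with cusps: there a finite maximal lamination may have bi\nobreakdash-infinite leaves running from cusp to cusp, which are themselves minimal components, so the bare invariant ``component of a maximal\nobreakdash-length lamination'' no longer singles out the closed geodesics. Separating a closed leaf (onto which nearby leaves spiral) from such an isolated cusp\nobreakdash-to\nobreakdash-cusp leaf is exactly where Proposition 8.10.7 must be invoked, and I expect this refinement of the closed\nobreakdash-geodesic criterion to be the hardest step of the argument.
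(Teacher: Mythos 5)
Your overall strategy is the one the paper sketches: translate the Thurston topology into the inclusion order on $\mathcal{GL}(S)$ and then detect the distinguished classes of laminations by order-theoretic invariants (chain length, number of minimal sublaminations). Your observation that inclusion is exactly the specialization preorder of the Thurston topology is a clean way to recover Proposition \ref{length}, and your treatment of Parts (\ref{item:1}) and (\ref{item:3}) is essentially right, modulo one slip: a maximal finite lamination on a closed surface does not carry a fixed number of leaves. It has $6g-6$ non-compact leaves plus a number $c$ of closed leaves with $1\leq c\leq 3g-3$ (the closed leaves are not sides of the complementary ideal triangles), so the chain length already varies among finite maximal laminations and attains its global maximum precisely on those containing a pants decomposition. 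This actually simplifies your argument for Part (\ref{item:1}), but the intermediate characterization as you state it (``maximal length $=$ finite maximal'') is not correct.

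The genuine gap is the one you flag and then leave open: Part (\ref{item:2}) on surfaces with punctures. Your criterion $\mathrm{length}(\Lambda)=\#\mathrm{components}-1$ does not separate a multicurve from a disjoint union of cusp-to-cusp geodesics (or a mixture of the two), since both have Boolean sublamination posets; and the minimal sublaminations of a maximal-length lamination include the cusp-to-cusp leaves, so ``component of a Part-(\ref{item:1}) lamination'' only characterizes ``closed or cusp-to-cusp geodesic''. Since the theorem is asserted for punctured surfaces, this case cannot be set aside, and saying that Proposition 8.10.7 ``must be invoked'' is not an argument. What is needed is an order-theoretic property separating the two kinds of one-leaf minimal laminations. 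One way to get it: a closed geodesic $c$ admits finite laminations $\Lambda\supsetneqq c$ in which $c$ has no complement in the lattice of sublaminations of $\Lambda$ (leaves spiral onto $c$, so every sublamination containing such a leaf contains $c$), whereas by Proposition 8.10.7 and the structure theory no leaf accumulates on a cusp-to-cusp geodesic, so such a geodesic is complemented in every lamination containing it. Until a statement of this kind is proved, Part (\ref{item:2}) is not established in the generality claimed.
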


For the proof  of Theorem \ref{main theorem}, one starts  with is a homeomorphism  $f:\mathcal{GL}(S)\rightarrow\mathcal{GL}
(S)\mathcal{\ }$ for the topology $\mathcal{T}$. We must show that there is
a homeomorphism $h:S\rightarrow S$ such that $h_{\ast}=f.$
There is a natural identification between
curve complex $\mathcal{C}(S)$ and the subset $\mathcal{CGL}(S)$ of $\mathcal{GL}(S)$ consisting of geodesic
laminations whose leaves are simple closed geodesics. 
From Proposition \ref{finite to finite}, $f$ induces an automorphism on $\mathcal{C} 
(S)$ and under the hypothesis of Theorem \ref{main theorem}, we obtain from the theorem of Ivanov-Korkmaz-Luo (Theorem \ref{th:IKL})
a homeomorphism $h:S\rightarrow S$ such that $h_{\ast}=f$ on $\mathcal{CGL} 
(S).$  Then we prove 
that $h_{\ast}=f$ on $\mathcal{FGL}(S).$ The details are contained in \cite{ChPP}.
 
The elements of the set $\mathcal{GL}(S)$ can be regarded as compact subspaces of the metric space $S$, and  $\mathcal{GL}(S)$ is then equipped with the Hausdorff metric, induced from the Hausdorff metric on compact subsets of $S$, defined as follows.

For $\epsilon >0$ and for any subset $X$ of $S$, let $N_{\varepsilon}(X)\subset S$ denote the set of points in $S$ which are at distance $\leq \epsilon$ from a point in $X$.

\begin{definition}[Hausdorff distance]
Let $X$ and $X^{\prime}$ be two compact subsets of $S$. The \emph{Hausdorff
distance} between $X$ and $X^{\prime}$  is defined by
$$d_{H}(X,X^{\prime})=\inf\{\varepsilon>0:X\subset N_{\varepsilon} 
(X^{\prime}) \hbox{ and } X^{\prime}\subset N_{\varepsilon}(X\}.$$
\end{definition}

There is no analogue of Theorem \ref{main theorem} for the Hausdorff topology, since simple closed geodesics are all isolated points in  $\mathcal{GL}(S)$ for that topology, and therefore we can find homeomorphisms of  $\mathcal{GL}(S)$ which send curves that all null-homologous to ones that are not null-homologous. Obviously, such homeomorphisms are not induced by surface homemorphisms. We propose the following:

\begin{problem}
Find a weak rigidity result for the action of the extended mapping class group on the space $\mathcal{GL}(S)$ equipped with the Hausdorff topology, that takes into account the existence of these isolated points.
\end{problem}

 \section{Some perspectives}\label{s:perspectives}
   \subsection{Mapping class groups of surfaces of infinite type} 
   In this subsection, we review the theory of mapping class group actions on Teichm\"uller spaces of surfaces of infinite type, and me mention some questions that appear naturally in that theory.

We consider orientable connected surfaces of infinite type, that is, surfaces of infinite genus and/or infinite number of punctures or of boundary components. We recall that such a surface is not topologically classified by its genus (finite or infinite) or its number of punctures or boundary components  (finite or infinite).
In fact, the classification of surfaces of infinite type is much more involved than the one of surfaces of finite type, see e.g. \cite{Richards}. It is also well known that the theory of mapping class groups and their actions have several ramifications in this setting. In fact, there are several groups which may be called ``mapping class group", depending on the context, which are quite different and which are natural generalizations of the case of surfaces of finite type. We will see below some examples.

One difference between the theories for finite and for infinite type surfaces is related to the fact that in the latter case there are several kinds of Teichm\"uller spaces that are associated to the same topological surface, and that these spaces naturally give rise to different mapping class groups acting on them. Even in the case of a fixed topological type of surface, the spaces and the groups depend on the choice of a base structure on the surface, and also on the point of view -- conformal or hyperbolic, and there are also other points of view. In particular, there is a so-called quasiconformal theory and  a hyperbolic theory which are distinct, and in the latter case there is a bi-Lipschitz theory, a length-spectrum theory, and there are other theories that need to be developed. In each case, there is an action of an appropriate mapping class group and a priori the actions are different, at least because the underlying spaces and groups are different.  We shall recall the basic elements of part of the theory which is developed in the papers \cite{ALPS1}, \cite{ALPS2}, \cite{ALPS3}, \cite{ALPS4}, \cite{LLPP}, \cite{Saric1} and \cite{Saric2}. 

For the study Teichm\"uller spaces of surfaces of infinite type and the actions of mapping class groups on them, we also refer to the survey papers \cite{Matsuzaki-Handbook}, \cite{Fu} and \cite{Li-Z}. The first two surveys concern the dynamics of the actions, whereas in the last survey, the author considers geometrical properties such as the non-uniqueness of the geodesics connecting points and the existence of geodesics of arbitrarily length in infinite-dimensional Teichm\"uller spaces.

The rigidity results mentioned in \S \ref{s:rigidity} that concern mapping class group actions on Teichm\"uller spaces of surfaces of finite type hold in the setting of the quasiconformal theory of Teichm\"uller space. In the other settings there are up to now  no equivalent results.

In the rest of this section, $X$ is an oriented surface of topologically infinite type. (We now use the letter $X$ for a topological surface, and the letter $S$ will denote a surface equipped with a specific structure). A first group which might be called the ``mapping class group" of $X$, is defined as a generalization of the case of surfaces of finite type. One starts with the group 
 $\mathrm{Homeo}^*(X)$ (respectively, $\mathrm{Homeo}(X)$) of homeo\-morphisms  (respectively, orientation preserving homeomorphisms) of $X$, equipped with the topology of uniform convergence on compact sets, and with $\mathrm{Homeo}_0^*(X)$ (respectively, $\mathrm{Homeo}(X)$), the identity connected component of that group. As in the case of surfaces of finite type, we set $$\Gamma^*(X)= \mathrm{Homeo}^*(X)/\mathrm{Homeo}_0^*(X)$$
and $$\Gamma(X)= \mathrm{Homeo}(X)/\mathrm{Homeo}_0^*(X).$$ 

It turns out that there are more useful groups which are naturally associated to $X$, and they are subgroups $\Gamma^*(X)$ and $\Gamma(X)$. We shall review them now. We start with the spaces on which these groups act, and we start with the quasi-conformal Teichm\"uller theory.

The definition we give is identical to the definition for surfaces of finite type, but we recall it here because the definitions of the other Teichm\"uller spaces (Definitions \ref{def:bL-i} and \ref{def:ls-i} below) will be modelled on it. In particular, the choice of the basepoint plays now an important role.

 We consider conformal structures on $X$ which have the property that the neighborhood of each topological puncture is conformally a punctured disc. 
The homotopies that we consider preserve the punctures and preserve setwise the boundary components.

 \begin{definition}\label{d:qc-i} 
        Consider a Riemann surface structure $S_0$ on $S$. The {\it  quasiconformal Teichm\"uller space},  $\mathcal{T}_{qc}(S_0)$ the set of equivalence classes $[f,S]$ of pairs $(f,S)$, where $S$ is a Riemann surface homeomorphic to $X$ and where $f:S_0\to S$ is a quasiconformal homeomorphism (whose homotopy class is called the \emph{marking} of $S$) and where two pairs $(f,S)$ and $(f',S')$ are  equivalent (more precisely, they are said to be {\it conformally equivalent})  if there exists a conformal homeomorphism $f'':S\to S'$ homotopic to $f'\circ  f^{-1}$. 
        \end{definition}
        
        The equivalence class of the marked Riemann surface $(\mathrm{Id},S_0)$ is the {\it basepoint} of $\mathcal{T}_{qc}(S_0)$.

      Given two elements $[f,S]$ and $[f',S']$ of $\mathcal{T}_{qc}(S_0)$ represented by marked conformal surfaces $(f,S)$ and $(f',S')$,  their {\it Teichm\"uller distance}  is defined as
        \begin{equation}\label{eq:qc} d_{qc}([f,S],[f',S'])=\frac{1}{2}\log \inf \{K(f'')\}
        \end{equation}
        where the infimum is taken over the set of maximal quasiconformal dilatations $K(f'')$ of quasiconformal homeomorphisms $f'':S\to S'$ homotopic to $f'\circ  f^{-1}$.

To the quasiconformal Teichm\"uller space is associated the following group:

\begin{definition}  The {\it quasiconformal (extended) mapping class group} of $S$, denoted by $\Gamma_{qc}(S)$ (respectively $\Gamma^*_{qc}(S)$) is the subgroup of $\Gamma(X)$  (respectively $\Gamma^*(X)$) consisting of equivalence classes of homeomorphisms that are quasiconformal with respect the base structure $S_0$.
 \end{definition}
 
 The group $\Gamma_{qc}(S_0)$ is a normal subgroup of  index 2 of $\Gamma^*_{qc}(S_0)$.

There is a natural action of $\Gamma^*_{qc}(S_0)$ on the Teichm\"uller space $\mathcal{T}_{qc}(S_0)$, and this action preserves the metric $d_{qc}$.

Unlike the case of a surface of finite type, the action of the quasiconformal mapping class group of a surface of infinite type on the quasiconformal Teichm\"uller space is not properly discontinuous; there are limit sets and domains of discontinuity, and there is a well-developed dynamical theory of the mapping class group action; see the survey \cite{Matsuzaki-Handbook}. This is a generalization of Royden's Theorem which we mentioned in \S \ref{s:rigidity}.
Such a study has not yet been carried out for the other Teichm\"uller spaces that we mention below.

A result whose most general version is due to Markovic \cite{Markovic2003} asserts that any isometry of the space $\mathcal{T}_{qc}(S_0)$ is induced by an element of  $\Gamma^*_{qc}(S_0)$. Here too, there is no analogous results for the other Teichm\"uller spaces.

We now consider other Teichm\"uller spaces.

        We say that a homeomorphism $f:(X_1,d_1)\to (X_2,d_2)$ between metric spaces is  {\it bi-Lipschitz} if there exists a real number $K\geq 1$ satisfying
     \[\label{eq:bl}\frac{1}{K} d_1(x,y)\leq d_2 (f(x),f(y))\leq Kd_1(x,y).
     \]
        The real number $K$ in the above double inequality is called a {\it bi-Lipschitz constant of $f$}.
        Two metric spaces are said to be {\it bi-Lipschitz equivalent} if there exists a bi-Lipschitz homeomorphism between them.

We shall consider hyperbolic structures on $X$, and, as before, they will all be complete with geodesic boundary.
We note however that the completeness issue is delicate in the case of surfaces of infinite type. For instance, the fact that for a surface to be complete it does not suffice (as in the case of surfaces of finite type)  that neighborhoods of all punctures are cusps. For this, and for the other details of the theory,  the reader can  refer to the references that we gave above.

The next definition is modelled on the quasiconformal one (Definition \ref{d:qc-i}).

Let $H_0$ be a hyperbolic metric on the surface $X$. 

        \begin{definition}\label{def:bL-i}  The {\it bi-Lipschitz Teichm\"uller space} of $H_0$, denoted by $\mathcal{T}_{bL}(H_0)$, is the set of equivalence classes $[f,H]$ of pairs $(f,H)$ where $H$ is a hyperbolic metric on a surface homeomorphic to $X$, where $f:H_0\to H$ (the {\it marking} of $H$)  is a bi-Lipschitz homeomorphism, and where two such pairs  $(f,H)$ and $(f',H')$ are equivalent if there exists an isometry $f'':H\to H'$ homotopic to $f'\circ  f^{-1}$.
         \end{definition}

         The topology of $\mathcal{T}_{bL}(H_0)$ is the one induced by the {\it bi-Lipschitz} metric $d_{bL}$, defined by 
                 \[d_{bL}([f,H],[f',H'])=\frac{1}{2}\log \inf \{K\}\]
                  where the infimum is taken over
        all bi-Lipschitz constants $K$ of
        bi-Lipschitz  homeomorphisms $f'':H\to H'$  homotopic to $f'\circ  f^{-1}$, where $(f,H)$ and $(f',H')$ are as usual two hyperbolic surfaces representing the two points
         $[f,H]$ and $[f',H']$   in
 $\mathcal{T}_{bL}(H_0)$. That this defines indeed a metric on
$\mathcal{T}_{bL}(H_0)$ uses the fact that if we take a sequence of bi-Lipschitz homeomorphisms homotopic to $f'\circ f^{-1}$ whose bi-Lipschitz constants tend to 1, then there exists an isometry homotopic to  $f'\circ f^{-1}$. 
                          A result in \cite{Thurston1997} (p. 268) says that for every hyperbolic structure $H$, we have a set-theoretic equality
\[
 \mathcal{T}_{qc}(H)=\mathcal{T}_{bL}(H)
\]
and that there exists a constant $C$ such that
for every $x$ and $y$ in $\mathcal{T}_{qc}(H)$, we have
\[
d_{qc}(x,y)\leq d_{bL}(x,y)\leq C d_{qc}(x,y).
\]

 To the bi-Lipschitz Teichm\"uller space is associated a bi-Lipschitz mapping class group:
 
\begin{definition} The {\it bi-Lipschitz extended mapping class group} of the surface $X$ relatively to the base hyperbolic metric $H_0$, denoted by $\Gamma_{bL}(H_0)$, is the subgroup  of $\Gamma^*(X)$  whose elements are the equivalence classes of self-homeomorphisms of $X$ that are bi-Lipschitz with respect to the metric $H_0$. The {\it bi-Lipschitz mapping class group} of $X$, $\Gamma(X)$,  is the subgroup of order 2 of $\Gamma^*(X)$  whose elements are the equivalence classes of orientation-preserving homeomorphisms. 
\end{definition}

The group $\Gamma^*_{bL}(H_0)$ acts naturally on the Teichm\"uller space $\mathcal{T}_{bL}(H_0)$, preserving the metric   $d_{bL}$.

\begin{problem}
Determine the isometry group of the bi-Lipschitz Teichm\"uller space $\mathcal{T}_{bL}(H_0)$.
\end{problem}

This question is also open for Teichm\"uller spaces of surfaces of finite type equipped with the bi-Lipschitz metric.

 We now recall the definition of the length spectrum Teichm\"uller space.

 We consider again a
 fixed hyperbolic metric $H_0$ on the surface $X$.  
  
 Given  a homeomorphism $f:(X,H)\to (X,H')$  where $H$ and $H'$ are hyperbolic metrics, we say that $f$ is {\it length-spectrum bounded} if the following holds:
\[
K(f)= \sup_{\alpha\in\mathcal{S}} \left\{ \frac{l_{H'}(f(\alpha))}{l_{H}(\alpha)},\frac{l_{H}(\alpha)}{l_{H'}(f(\alpha))}\right\}<\infty.
\]

The quantity $K(f)$ is the {\it length-spectrum constant of $f$}. It depends only on the homotopy class of $f$.

We consider the collection of marked hyperbolic structures $(f,H)$ relative to the base structure $H_0$ and where the marking $f:H_0\to H$ is length-spectrum bounded.

There is an equivalence relation on this set, where  two marked hyperbolic structures $(f,H)$ and $(f',H')$ are equivalent if there exists an isometry or, equivalently, a length spectrum preserving homeomorphism, $f'':H\to H'$ which is homotopic to $f'\circ  f^{-1}$.  We shall say that two such structures are \emph{length spectrum equivalent}.

\begin{definition}\label{def:ls-i} The {\it length-spectrum Teichm\"uller space} $\mathcal{T}_{ls}(H_0)$ is the space of length spectrum-equivalence classes $[f,H]$ of length-spectrum bounded marked hyperbolic surfaces $(f,H)$.\end{definition}
The equivalence class $[\mathrm{Id},H_0]$ is the {\it basepoint} of the Teichm\"uller space $\mathcal{T}_{ls}(H_0)$.

The topology on the space $\mathcal{T}_{ls}(H_0)$ is the one induced by the {\it length-spectrum} metric $d_{ls}$, defined by taking the distance $d_{ls}([f,H],[f',H'])$ between two points in $\mathcal{T}_{ls}(H_0)$ represented by two marked hyperbolic surfaces $(f,H)$ and $(f',H')$ to be
        \[d_{ls}([f,H],[f',H'])=\frac{1}{2}\log K(f'\circ  f^{-1}).\]

The fact that the function $d_{ls}$ satisfies the properties of a metric is straightforward, except perhaps for the separation axiom. 

It is easy to see that there exist pairs of marked hyperbolic structures on $X$ which are not related by any length-spectrum bounded homeomorphism. 
   
There is a natural inclusion
               \[ \mathcal{T}_{bL}(H_0)\hookrightarrow \mathcal{T}_{ls}(H_0).\]
               This inclusion map continuous since for each $x$ and $y$ of $\mathcal{T}_{bL}(H_0)$,  we have
               \[                d_{ls}(x,y)\leq d_{bL}(x,y).
           \]
              This inclusion is generally strict. 
                 
                           It follows from an inequality due to Sorvali \cite{Sorvali} and Wolpert \cite{Wolpert} that there is also a natural inclusion map
\[
\mathcal{T}_{qc}(H_0)\hookrightarrow \mathcal{T}_{ls}(H_0)
\] 
which is continuous. Here also, in general, the inclusion between the two spaces is strict. 
For all these facts, for questions and results related to these inclusions, and for other results on the Teichm\"uller spaces considered, we refer the reader to the papers already mentioned,  \cite{LLPP}, \cite{ALPS1}, \cite{ALPS2}, \cite{ALPS3}, \cite{ALPS4}, \cite{Saric1} and \cite{Saric2}.

 To the length-spectrum Teichm\"uller space is associated a length-spectrum mapping class group:
 
\begin{definition} The {\it length-spectrum extended mapping class group} of the surface $X$ relatively to the base hyperbolic metric $H_0$, denoted by $\Gamma_{ls}^*(H_0)$, is the subgroup of $\Gamma(X)$  whose elements are the equivalence classes of self-homeomorphisms of $X$ that are length-spectrum bounded with respect to the basepoint metric $H_0$.
The {\it length-spectrum mapping class group} of $X$, denoted by $\Gamma_{ls}(H_0)$, is the subgroup of $\Gamma^*_{ls}(H_0)$  whose elements are the equivalence classes orientation-preserving self-homeomorphisms.\end{definition}

The group $\Gamma_{ls}^*(H_0)$ acts naturally on the Teichm\"uller space $\mathcal{T}_{ls}(H_0)$, and this action preserves the metric   $d_{ls}$.
As for the bi-Lipschitz Teichm\"uller space, the following is a natural problem:

\begin{problem}
Determine the isometry group of the length-spectrum Teichm\"uller space $\mathcal{T}_{ls}(H_0)$.
\end{problem}

This question is also open for Teichm\"uller spaces of surfaces of finite type, equipped with the length-spectrum metric.
                            
\subsection{Higher Teichm\"uller theory}

The Teichm\"uller space of a closed surface $S$ of genus $>1$ can be regarded as a subspace of the character variety 
\begin{equation}\label{R2}
\mathcal{R}_2= \mathrm{Hom}(\pi_1(S),\mathrm{PSL}(2,\mathbb{R}))/\mathrm{PSL}(2,\mathbb{R})
\end{equation}
 by considering $\mathrm{PSL}(2,\mathbb{R})$ as the isometry group of the upper half-space model of the hyperbolic plane and associating to each hyperbolic structure its holonomy homomorphism, an element of $\mathrm{Hom}(\pi_1(S),\mathrm{PSL}(2,\mathbb{R}))$. In fact, the image of the Teichm\"uller space of $S$ in $\mathcal{R}_2$ coincides with a connected component of that variety, a component which therefore consists entirely of discrete and faithful representations. Let us note that the same
 Teichm\"uller space can also be obtained in the setting of representations of $\pi_1(S)$ in the projective special unitary group $\mathrm{PSU}(1,1)$, identifying this group with the orientation-preserving isometry group of the Poincar\'e disc. The theory obtained is identical.

Higher Teichm\"uller theory is an extension of Teichm\"uller theory to representations of fundamental groups of surfaces into Lie groups other than $\mathrm{PSL}(2,\mathbb{R})$ of $\mathrm{PSU}(1,1)$.  In this setting, a \emph{higher Teichm\"uller space} is defined then as a connected component (or a union of connected components) of  a space $\mathrm{Hom}(\pi_1(S),G)/G$ that consists of equivalence classes of discrete and faithful representations, where $G$ is some simple Lie group. A question that arises naturally in this theory is therefore to find such components and to characterize them. Some questions are related to  various  structures that such components may carry (metric, symplectic, complex, combinatorial, etc.), in analogy with the structures that are known to exist on the classical Teichm\"uller spaces. The mapping class group of the surface (as the outer automorphism group of the fundamental group) acts on these components, and there are also interesting questions on the dynamics of that action. In a few cases, the action is properly discontinuous, and in a few cases there are notions of limit sets, domains of discontinuity, etc.
  
 Some of these questions are already settled, and some others are under thorough investigation, by several people. For instance, it is known that for complex (semi-)simple Lie groups, there are no connected components of the representation variety that consist entirely of equivalence classes of discrete and faithful representations.  Concerning the action, one expects that typically, when the action is not properly discontinuous, there is a decomposition of the component (like in the theory of Kleinian groups) where the action is properly discontinous on some open set and chaotic in some precise sense on its complement. This turns out to be true in some cases.

The study of the character variety involves some theory of algebraic group actions. In fact, the definition of the equivalence relation on representations that give the character variety is naturally formulated in terms of ``defining the same character", referring to the map obtained by composing the representation with the trace function. This requires, in some cases, a definition  which is more precise than the one we gave for $\mathcal{R}_2$ in (\ref{R2}).  In some cases one has to deal with the notion of Mumford quotient, which has a natural structure of an algebraic variety.

  Character varieties in the case of $G=\mathrm{PSL}(2,\mathbb{C})$ were studied in the context of Kleinian groups. In the case where $G$ is a compact group, the character variety $\mathrm{Hom}(\pi_1(S),G)/G$ was already extensively studied in the context of gauge theories.

 Although generalization in mathematics is natural, it is interesting only if it leads to new substantial results.
 
 The first substantial results in the case of non-compact Lie groups (after the cases $G=\mathrm{PSL}(2,\mathbb{R})
$ and $G=\mathrm{PSL}(2,\mathbb{C}
)$) are probably those of Hitchin (1992) who studied representations of fundamental groups of closed surfaces in 
$\mathrm{PSL}(n,\mathbb{R})$ (generalizing the classical case where $n=2$) and, more generally, in the adjoint group of a split real simple Lie group. 
   Hitchin worked in the setting of Higgs bundles. 
    These are holomorphic vector bundles, equipped with so-called \emph{Higgs fields} that appeared before in works of Hitchin and of Simpson. The terminology comes from mathematical physics, where Higgs fields are some invisible energy fields. We refer for this in the paper by Maubon in Volume II of this Handbook \cite{Maubon}.

  Hitchin  found components of the representation varieties  \[\mathcal{R}_n=\mathrm{Hom}(\pi_1(S),\mathrm{PSL}(n,\mathbb{R}))
/\mathrm{PSL}(n,\mathbb{R})
\]
   for $n\geq 2$, 
  which contain a \emph{Fuchsian locus}, that is, a subspace consisting of equivalence classes of representations obtained by composing a classical Fuchsian representations of $\pi_1(S)$ into $\mathrm{PSL}(2,\mathbb{R})
$ with the so-called ``preferred homomorphism" $\mathrm{PSL}(2,\mathbb{R})
\to \mathrm{PSL}(n,\mathbb{R})
$ defined by the irreducible representation of $\mathrm{PSL}(2,\mathbb{R})
$ into $\mathrm{PSL}(n,\mathbb{R})
$ (see \cite{Hi92}). Such a component has properties which resemble those of  Teichm\"uller space.
 and in fact, for $n=2$, Hitchin components are the Teichm\"uller components (that is, the Fuchsian components). The components in $\mathcal{R}_n$ highlighted by Hitchin are homeomorphic to $\mathbb{R}^{(2g-2)(n^{2}-1)}$, and they are called \emph{Hitchin components}. Hitchin also studied the other components. 
Soon after the work of Hitchin, Choi and Goldman showed that in the case where $G=\mathrm{PSL}(3,\mathbb{R})$, the Hitchin component consists precisely of the holonomy representations of convex real projective structures on the surface, thus answering a natural question \cite{Choi-G}. This component contains the usual Teichm\"uller space as a subspace. As a consequence, every element in that component is discrete and injective.

Following the work by Hitchin in the case of $\mathrm{PSL}(n,\mathbb{R})$, an intense activity on higher Teichm\"uller theory was conducted in the last decade by several authors, including A'Campo, Bradlow,  Burger, Fock, Gothen, Iozzi, Garc\'\i a-Prada, Goncharov, Guichard, Labourie, Loftin, Riera, Wienhard,  and there are many others. In particular, Fock and Goncharov developed a theory which is parallel to the higher Teichm\"uller theory using a notion of positiveness of representations of the fundamental group of the surface into a split semisimple algebraic group $G$ with trivial center. A similar notion of positiveness had been introduced before by G. Lusztig in his theory of canonical bases that appeared in the 1990s. Fock and Goncharov gave a more geometric version, where positiveness of a representation is defined in terms of Thurston-like  shear coordinates on the edges of surface hyperbolic ideal triangulations. They proved that these positive representations are faithful and discrete and that their moduli
space is an open cell in the space of all representations, generalizing the classical case where $G 
= \mathrm{PSL}(2, 
\mathbb{R})$. Furthermore, they developed a relation between these positive representations and cluster algebras and with the quantization theory of Teichm\"uller space. 

Labourie, in 2006, associated to every representation in a Hitchin component a curve in a projective space, which he called a \emph{hyperconvex Frenet curve},  generalizing the Veronese embedding from $\mathbb{P}(\Bbb R^2)$ into $\mathbb{P}(\mathbb{R}^n)$. Such a curve is also a generalization of the curve in $\mathbb{P}(\mathbb{R}^3)$ obtained in the case $n=3$ which is the boundary of the open set in the sphere, obtained by the developing map of a convex real projective on the surface associated to elements of the Hitchin components by works of Choi and Goldman in \cite{Choi-G}.  Labourie also introduced the notion of Anosov representations.  
 In its original form, such a representation arises as the holonomy of an Anosov structure on the underlying surface. After the work of Labourie, this notion became the basis of a new dynamical framework for the study of representations in the Hitchin components. In the case of representations in $\mathrm{PSL}(n,\mathbb{R})$,  Labourie showed that the representations are discrete and injective and that every element in the image is diagonalisable with eigenvalues being all real and with distinct absolute values.  An Anosov representation is an analogue of a convex cocompact representation in the setting of higher rank Lie groups. Labourie showed that a Hitchin component in the case of  
$\mathrm{PSL}(n,\mathbb{R})$ consists entirely of Anosov representations (and therefore they are discrete and faithful) and that a representation in such a component is a quasi-isometric embedding. He also showed that Anosov representations form a set on which the mapping class group acts property discontinuously.   In particular,  the mapping class group acts properly discontinuously on the Hitchin component.   
 
 In 2008, Guichard and Wienhard completed the characterization of the representations in the Hitchin components in the case where $G=\mathrm{PSL}(4,\mathbb{R})$ as properly convex foliated projective 
structures on the unit tangent bundle of the surface \cite{G2008}. Labourie, Guichard and Wienhard showed that in the case where $G$ is a rank one semisimple Lie group, a representation is Anosov if and only if it is a quasi-isometric embedding, and moreover that this representation is Anosov if and only if its image is a convex cocompact group. Guichard and Wienhard also developed the theory of Anosov representations in the context of representation theory of hyperbolic groups.

Guichard and Wienhard obtained in \cite{GW2010} a description of the topological components of the character variety for $G=\mathrm{PSL}(4,\mathbb{R})$. The number of such topological components had been counted before by Garc\'\i a Prada and Riera in \cite{GPR}.

 There is also an extensive literature on the symplectic structure of the character variety, first introduced by Atiyah and Bott for the case of compact groups \cite{AB1983} and then by Goldman \cite{Goldman84} for non-compact groups, who also made the relation with the Weil--Petersson K\"ahler form. This led in particuar to the Witten formulae on the symplectic volume of these character varieties of compact groups, with relations with 3-manifolds and the Chern-Simons invariant.

As we already mentioned, the mapping class group acts on the various components of the representation variety, and  of course  one should expect rigidity theorems for these actions, when the representation variety is equipped with extra structure. I will discuss one instance of such a structure, which is the recently defined \emph{pressure metric}. 
  
  The definition starts with an alternative definition of the Weil-Petersson metric on Teichm\"uller space which was given in 1985 by Thurston. 
  
  Thurston defined a Riemannian metric on Teichm\"uller space where the scalar product of two vectors at a given point represented by a hyperbolic surface $S$ is the second derivative with respect to the earthquake flow along these vectors of the length of a ``uniformly distributed sequence" of closed geodesics in $S$. This is motivated by the fact that the geodesic length function on Teichm\"uller space associated to a simple closed curve is convex along earthquake paths (this is a result of Kerckhoff \cite{Kerckhoff1983}), and therefore such a function  behaves like (the square of) a distance function measured from the minimum, with the second derivative at that minimum playing the role of a metric tensor. Taking the length of a uniformly distributed sequence of closed geodesics removes the dependence on  the chosen length function. Soon after that, Wolpert showed in \cite{Wolpert1986} that this metric on Teichm\"uller space  is nothing else but the Weil-Petersson metric. More recently, Bridgeman and Taylor defined in \cite{BT2008} a semi-definite metric on quasi-Fuchsian space (a complexification of Teichm\"uller space defined via Bers' simultaneous uniformization theorem) to which the space of geodesic currents and of length functions can be extended. They showed that this semi-definite metric is an extension of the Weil-Petersson metric on Teichm\"uller space. The construction of Bridgeman and Taylor involves taking the second derivative of the Hausdorff dimension of the limit set. This  is again based on the fact that the Hausdorff dimension function reaches its minimum on Teichm\"uller space and that its Hessian defines a semi-definite metric. Bridgeman and Taylor used this metric to show that the second derivative of the Hausdorff dimension $h$ of the limit set in some directions $b$ (the so-called pure bending directions) is at least equal to the Weil-Petersson metric on the associated twist vector $t$. In other words $h''(b) \geq g(t,t)$ for $b = J.t$ where $t$ is a vector tangent to the Fuchsian locus and $J$ is the complex structure on the quasi-Fuchsian locus.  
  
 Building on this work of Bridgeman and Taylor and using the thermodynamic formalism developed by Bowen in the 1970s \cite{Bowen1973} \cite{Bowen1975},  McMullen gave in \cite{McMullen22008} a thermodynamic interpretation of that metric, which is therefore called a ``pressure metric".
McMullen also showed that the inequality $h''(b) \geq g(t,t)$ is in fact an equality and that for the Markov coding in the quasi-Fuchsian case, this pressure metric is equal to the metric $G$ defined by Bridgeman and Taylor. In a subsequent paper, \cite{Bridgeman2010}, Bridgeman used McMullen's  approach to study $G$ and the critical points of the Hausdorff dimension function. The quasi-Fuchsian extension $G$ is not a Riemannian metric but is a path metric in which some vectors have zero length. Bridgeman showed that the singular locus consists exactly of the pure bending vectors on the Fuchsian locus.

 McMullen also interpreted this metric as (a multiple of) the Hessian of the dimension of the push-forward of Lebesgue measure on the circle. At the same time, he obtained a Weil-Petersson-type metric on the space of H\"older continuous functions modulo coboundaries. The normalized Weil-Petersson metric on Teichm\"uller space is then obtained  as a pullback of the pressure metric via the ``thermodynamic" embedding of Teichm\"uller space into that space.  
 
 More recently, the pressure metric formulation of the Weil-Petersson metric was generalized by Bridgeman, Canary, Labourie and  Sambarino \cite{Pressure} to a mapping class group invariant metric on the Hitchin components of the character variety of representations of the fundamental group of the surface into $\mathrm{SL}(n,\mathbb{R})$ which restricts to a multiple the Weil-Petersson metric on the Fuchsian locus. 
  
 In fact, the authors in \cite{Pressure} define more generally an $\mathrm{Out}(\Gamma)$-invariant Riemannian metric on the set of smooth points of the deformation spaces of convex, irreducible representations of a word hyperbolic group $\Gamma$ into $\mathrm{SL}(m,\mathbb{R})$. They also use the so-called Pl\"ucker representation to produce metrics on deformation spaces of convex cocompact representations of the hyperbolic group into  $\mathrm{PSL}(2,\mathbb{C})$ and on the smooth points of deformation spaces of Zariski dense Anosov representations into arbitrary semi-simple Lie groups. They generalize in several directions the work that was done in the Fuchsian and quasi-Fuchsian cases by Bridgeman and by McMullen.
  
There is also a Riemannian metric on a space which contains Teichm\"uller space, which was defined by Darwishzadeh and Goldman in \cite{DG1996} on the space $\mathcal{B}(S)$ of equivalence classes of real projective structures on a closed surface $S$. The definition uses Hodge theory and a classical construction of a canonical metric due to Koszul and Vinberg. By a result of Goldman \cite{Goldman1990}, the  associated space $\mathcal{B}(S)$ is homeomorphic to an open cell of dimension $16(g-1)$, where $g$ is the genus of $S$. Labourie and Loftin showed independently that an element of this space is parametrized by a pair (Riemann surface, holomorphic cubic differential), see \cite{Labourie2007} and \cite{Loftin2001}, see also the survey \cite{Loftin-H}. Teichm\"uller space is embedded in $\mathcal{B}(S)$ as the space of pairs for which the cubic differential is identically zero.
 Qiongling Li showed recently that this Riemannian metric restricts to the Weil-Petersson metric on Teichm\"uller space. She also defined a new metric on $\mathcal{B}(S)$, which she called the \emph{Loftin metric},  and she showed that Teichm\"uller space  equipped with the Weil-Petersson metric, sits as a totally geodesic subspace in $\mathcal{B}(S)$ equipped with the Loftin metric.  
    
 There are several questions which arise in this theory, and the most natural one, in relation with the subject of this survey, is to study the automorphisms of these metrics defined on subspaces of the representation spaces, with respect to various structures on these subspaces, and to find or at least to conjecture what are the rigidity results that hold for them.

Let us note finally that there are several generalizations of actions on the character variety of representations of the free group and to other finitely generated groups (we already mentioned hyperbolic groups) into Lie groups, see \cite{Goldman2006} and the surveys by Wienhard \cite{W2004},  by Wentworth \cite{Went-Survey} and by Burger-Iozzi-Wienhard \cite{BIW}. We refer to the survey by Canary \cite{Canary-K} in this volume and to the references there, concerning the representation theory of outer automorphisms of word hyperbolic groups into semi-simple Lie groups.

     \subsection{The Grothendieck-Teichm\"uller theory}\label{s:Gro}

  Grothendieck's theory, also called the Grothendieck-Teichm\"uller theory,  makes relations between, on the one and, the theory of Teichm\"uller spaces and mapping class groups, and, on the other hand, the study of the \emph{absolute Galois group} $\Gamma_{\mathbb{Q}}$ of the field  $\mathbb{Q}$ of rational numbers, that is, the group of automorphisms of the algebraic closure $\overline{\mathbb{Q}}$ of $\mathbb{Q}$. In this section, we mention a few elements of this theory, because there is much work that can be done in this setting. 
 
 Let us first recall that understanding the field extensions of $\mathbb{Q}$, and in particular the finite ones (called \emph{number fields}), is one of the major questions in number theory. For instance, the so-called \emph{inverse Galois problem}, which dates from the nineteenth century, asks whether every finite group is a \emph{Galois group}, that is, whether it is the automorphism group of some number field. A famous result of Shafarevich gives a positive response to this question in the case of finite solvable groups.  Originally, the importance of the question of the realization of finite groups as automorphisms of number fields stems from its relation with the solvability of equations by radicals. 
 The group $\Gamma_{\mathbb{Q}}$ contains (in principle) information on all number fields and it  is considered as the most important group in number theory, but almost nothing is known about its structure.  

 To understand the absolute Galois group, one naturally have to look for geometric actions of that group. 
An example of a geometric action of $\Gamma_{\mathbb Q}$ is the one on the algebraic fundamental group of an algebraic variety $X$ defined over $\mathbb Q$. We recall that the algebraic fundamental group of $X$ is the profinite completion of its topological fundamental group. We now review a few facts on profinite groups and group completions because they appear in the Grothendieck-Teichm\"uller theory. In fact, the absolute Galois group is itself a profinite group.

A profinite group is a Hausdorff compact totally disconnected topological group. Equivalently, this is a topological group which is an inverse limit of a system of finite groups. Thus, in some sense, profinite groups are asymptotic groups with respect to finite groups. The group of $p$-adic integers $\mathbb{Z}_p$ equipped with addition is an example of a profinite group; it is the inverse limit of the system of finite groups $(\mathbb{Z}/p^n\mathbb{Z})$ equipped with the natural projection maps $\mathbb{Z}/p^n\mathbb{Z}\to \mathbb{Z}/p^m\mathbb{Z}$ for $n\geq m$.

To an arbitrary group $G$ is associated its \emph{profinite completion}, denoted by $\widehat{G}$, defined as the inverse limit of the system of groups $G/N$ where $N$ runs over the normal subgroups of finite index of $G$. 
 The partial order defined by inclusion between subgroups induces a system of natural homomorphisms between the corresponding quotients. This makes the set of groups $G/N$ an inverse system. 
There is a natural homomorphism $G\to \widehat{G}$ whose image is dense in $\widehat{G}$. This homomorphism is injective if and only if $G$ is residually finite. 

Normal subgroups of finite index are in one-to-one correspondence with finite covering spaces and this gives a geometric representation of the operation of taking the profinite completion. This is also a clue to why these groups appear in the Teichm\"uller theory of Grothendieck.
 
 Now we come back to the work of Grothendieck.
 
  Grothendieck became interested in the topology of surfaces at about the same time Thurston developed his theory,  and he expressed  this interest in late writings which he  did not develop because he had already put an end to his activity as a mathematician. His ideas on the absolute Galois group starts from the conviction that this group should be studied through its action on profinite versions of moduli spaces and of mapping class groups rather than on number fields directly. He  introduced in this theory the object he called the \emph{Teichm\"uller tower} (which is called now the \emph{Grothendieck-Teichm\"uller tower}). This is a collection of mapping class groups of all surfaces of variable finite topological type, together with the natural homomorphisms between them that arise from maps between moduli spaces of corresponding surfaces, which are themselves induced by natural maps between the surfaces. For instance, the maps may be induced from inclusions between surfaces, or by closing up some punctures. There is an (outer) automorphism group of such a tower, which is given by a collection of automorphisms of the various mapping class groups that constitute it, and which are compatible (up to inner automorphisms) with the homomorphisms of the tower; see the theory presented in \cite{HLS}. See also the survey \cite{AJP2}. We remind the reader that towers, e.g. towers of field extensions, often occur in Galois theory, and in particular in class field theory. Several important questions in algebraic geometry and number theory involve the construction of towers. In view of this, it is not surprising that Grothendieck made this definition. The main conjecture of Grothendieck in this setting is that the automorphism group of the Teichm\"uller tower is isomorphic to the absolute Galois group. 
  
We quote the following from Grothendieck's \emph{Esquisse d'un programme} (English translation by L. Schneps):

\begin{quote}\small
It is more the system of \emph{all} the multiplicities $M_{g,\nu}$ for variable $g,\nu$, linked together by a certain number of operations (such as the operations of ``plugging holes", i.e. ``erasing" marked points, and of ``glueing", and the inverse operations), which are the reflection in absolute algebraic geometry in characteristic zero (for the moment) of geometric operations familiar from the point of view of topological or conformal ``surgery" of surfaces. Doubtless the principal reason is that this very rich geometric structure on the system of ``open" modular multiplicities $M_{g,\nu}$ is reflected in an analogous structure on the corresponding fundamental groupoids, the ``Teichm\"uller groupoids" $\widehat{T}_{g,\nu}$, and that these operations on the level of $\widehat{T}_{g,\nu}$ are sufficiently intrinsic for the Galois group $\Gamma$ on $\overline{\mathbb{Q}}/\mathbb{Q}$ to act on the whole ``tower" of Teichm\"uller groupoids, respecting all these structures. Even more extraordinary, this action is \emph{faithful} -- indeed, it is already faithful on the first non-trivial ``level" of this tower, namely, $\widehat{T}_{0,4}$ -- which also means, essentially, that the outer action of $\Gamma$ on the fundamental group $\widehat{\pi}_{0,3}$ of the standard projective line $\mathbb{P}^1$ over $\mathbb{Q}$ with the three points 0, 1 and $\infty$ removed, is already faithful. Thus, \emph{the Galois group $\Gamma$ can be realized as an automorphism group of a very concrete profinite group}, and moreover respects certain essential structures of this group. It follows that an element of $\Gamma$ can be ``parametrized" (in various equivalent ways) by a suitable element of this profinite group  $\widehat{\pi}_{0,3}$  (a free profinite group on two generators), or by a system of such elements, these elements being subject to certain simple necessary (but doubtless not sufficient) conditions for this or these elements to really correspond to an element of $\Gamma$. One of the most fascinating tasks here is precisely to discover necessary \emph{and} sufficient conditions on an exterior automorphism of $\widehat{\pi}_{0,3}$, i.e. on the corresponding parameter(s), for it to come from an element of $\Gamma$ -- which would give a ``purely algebraic" description, in terms of profinite groups and with no reference to the Galois theory of number fields, to the Galois group $\Gamma=\overline{\mathbb{Q}}/\mathbb{Q}$.
\end{quote}
    
             We refer the interested reader to the recent surveys \cite{Lochak2012}, \cite{Schneps} and \cite{AJP2} and to the references there for some work done in this direction.

The relation of Grothendieck's ideas with the Thurston type topology of surfaces was never stressed enough.  There is much to do in this field. 
 Instead of towers of Teichm\"uller spaces, one may consider towers of other spaces: combinatorial, metric, etc. (in fact, one can consider \emph{all} the spaces we mentioned on which mappings class groups of surfaces act) on which the mapping class groups of surfaces finite type act and study the automorphism groups of these towers.

   \subsection{The solenoid theory}

   The ideas behind this theory are closely related to Grothendieck's ideas which we described in \S \ref{s:Gro}, though none of the papers that study the solenoid which we quote below mention Grothendieck's work.

The \emph{solenoid}  is a  fiber space over a closed Riemann surface of genus $\geq 2$ obtained through the construction of a tower of coverings. It was introduced by Sullivan in the early 1990s (see \cite{Sullivan1993}), as  the inverse limit of a tower of finite-sheeted pointed coverings of a fixed closed surface of genus $\geq 2$. 
More precisely, we start with a closed surface $S$ of genus $\geq 2$ equipped with a basepoint $x$ and we consider the set of all  basepoint-preserving unbranched coverings of $S$, $\{(S',x')\to (S,x)\}$. There is a natural partial order relation on this collection of coverings: given two coverings  $\pi_i:(S_i,x_i)\to (S,x)$ and  $\pi_j:(S_j,x_j)\to (S,x)$, we write $\pi_i\leq \pi_j$ if $\pi_j$ factors through a covering $\pi_{j,i}: (S_j,x_j)\to (S_i,x_i)$, that is, if $\pi_j=\pi_i\circ \pi_{j,i}$. We obtain an inverse directed set, and the solenoid is its topological  inverse limit.
The homeomorphism type of this inverse limit does not depend on the choice of the base surface (provided it has negative Euler characteristic)  and the solenoid can be thought of as the ``universal closed surface" because it fibers over any surface that is in the tower of coverings of $S$ (and not only over the base surface $S$). 

The solenoid locally is homeomorphic to the product of a surface with a Cantor set. This is a two-dimensional topological analogue of a geodesic lamination with non-closed leaves on a hyperbolic surface. For this reason, the solenoid is also called a \emph{surface lamination}. The path-connected components of this lamination are called \emph{leaves}, and they are all homeomorphic to the open disc. Each leaf is dense in the solenoid. The leaf which contains the basepoint is called the \emph{baseleaf}.  The solenoid is also a principal $\widehat{\pi_1(S)}$-bundle over the surface $S$, where $\widehat{\pi_1(S)}$ is the profinite completion of the fundamental group $\pi_1(S)$.  

There are natural notions of differentiable, complex, hyperbolic, etc. structures on the solenoid. Such a structure is defined in terms of an \emph{atlas} of the soleniod considered as a surface lamination. More precisely, an atlas of the solenoid is a collection of pairs $\{(V_i, \psi_i)\}_{\i \in \mathcal{I}}$, where each $V_i$ is an open set of the solenoid and $\psi_i:V_i\to U_i\times T_i$ a homemorphism, with $U_i$ being a 2-dimensional disc and $T_i$ a Cantor set, and where the transition maps 
$\psi_j\circ \psi_i^{-1}$, defined on the appropriate domains, send discs into discs.
A differentiable, complex, hyperbolic, etc. structure on the solenoid is then defined accordingly, with transition maps differentiable, complex, hyperbolic, etc. in the disc direction, and continuous in the Cantor set direction. All this is due to Sullivan \cite{Sullivan1993}.

 The work of Candel on the uniformization of surface laminations \cite{Candel1993} shows that each complex structure on the solenoid contains a unique hyperbolic structure.

A complex or a hyperbolic structure on the solenoid may or may not be a lift of a complex or a hyperbolic structure on a closed Riemann surface under the fiber map. However, these special structure play a special role and they are said to be \emph{transversely locally constant}. 

There is a notion of quasiconformal map from the compact solenoid to itself equipped with two complex structures, and there is a corresponding Teichm\"uller space, defined as a space of equivalence classes of marked complex structures. This space can also be obtained as the closure of the stack of all Teichm\"uller spaces of surfaces of genus $>1$. It is also a universal object in  the sense that one can canonically embed in it every classical Teichm\"uller space of a hyperbolic surface.  The Teichm\"uller space of the solenoid admits a complete  metric which is defined in a way similar to the Teichm\"uller metric of a surface. 
 This space is infinite-dimensional but, unlike the classical universal Teichm\"uller space, it is separable, a fact which is remarkable because all the classical Teichm\"uller spaces are either finite-dimensional or infinite-dimensional and non-separable. We refer the reader to the survey by \v{S}ari\'c \cite{Saric}.

There is a notion of \emph{mapping class group} of the solenoid, defined as the group of isotopy classes of quasiconformal self-maps of this space.  This  group acts on the Teichm\"uller space of the solenoid, preserving the Teichm\"uller metric.  There is a natural group of \emph{baseleaf preserving mapping class group}.

In \cite{MS2005}, Markovic and \v{S}ari\'c study quasiconformal maps between complex solenoids. They show  that  if two such maps are homotopic, then they are isotopic by a uniformly quasiconformal isotopy. This is a property which is analogous to a property shared by maps between compact and non-compact Riemann surfaces. The case of surfaces of infinite type  was also treated by Earle and McMullen in \cite{EM1988}. Markovic and \v{S}ari\'c also show that any homeomorphism of the solenoid is homotopic to a quasiconformal self-map. An analogue of the last property holds for maps between closed surfaces but it does not hold in general for open surfaces. The paper \cite{MS2005} contains several results on limits of quasiconformal maps as well as an analogue of the Nielsen realization theorem for the mapping class group of the solenoid. Markovic and \v{S}ari\'c also studied the dynamics of the action of the solenoid on its Teichm\"uller space. They proved that there exists a dense subset of that space such that the orbit of the baseleaf preserving mapping class group of any point of this subset has an accumulation point. In particular, this action is not properly discontinuous.

The following rigidity result for the solenoid mapping class group was obtained by Odden:
\begin{theorem}[\cite{Odden1997} and \cite{Odden2004}] \label{th:virt}
The baseleaf preserving mapping class group of the  solenoid is naturally isomorphic to the virtual automorphism group of the fundamental group $\pi_1(S)$ of the base surface.
\end{theorem}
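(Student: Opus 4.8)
The plan is to realize both groups concretely and produce a canonical isomorphism between them. Write $\pi=\pi_1(S,x_0)$, let $\widetilde S=\mathbb{H}^2$ be the universal cover with its $\pi$-action by deck transformations, and let $\widehat\pi$ denote the profinite completion. I would first record the standard model of the solenoid
\[
\mathcal{S}\;\cong\;\bigl(\widetilde S\times\widehat\pi\bigr)/\pi ,
\]
where $\gamma\cdot(x,g)=(\gamma x,\gamma g)$ and where the residual right action of $\widehat\pi$ exhibits $\mathcal S$ as a principal $\widehat\pi$-bundle over $S=\widetilde S/\pi$. In this model the baseleaf $\ell_0$ is the image of $\widetilde S\times\{e\}$; it is a faithful copy of $\widetilde S$, and a short computation shows that its setwise stabilizer inside the structure group $\widehat\pi$ is exactly the dense discrete subgroup $\pi$, acting on $\ell_0\cong\widetilde S$ as the deck group. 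Since by the result of Markovic and \v{S}ari\'c quoted above every homeomorphism of $\mathcal S$ is homotopic to a quasiconformal one, I may compute with topological maps and take the baseleaf-preserving mapping class group $\mathrm{MCG}_{\mathrm{blp}}(\mathcal S)$ to consist of isotopy classes of baseleaf-preserving homeomorphisms.

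The forward map $\Psi\colon\mathrm{MCG}_{\mathrm{blp}}(\mathcal S)\to\mathrm{Comm}(\pi)$ I would build by restriction to the baseleaf, where $\mathrm{Comm}(\pi)$ is the virtual automorphism group whose elements are isomorphisms between finite-index subgroups of $\pi$ modulo agreement on a further finite-index subgroup. A baseleaf-preserving homeomorphism $h$ restricts to an honest homeomorphism $\widetilde h=h|_{\ell_0}$ of $\widetilde S$, with no lift ambiguity since $\ell_0$ is literally embedded. The key lemma is that continuity of $h$ across the transverse profinite direction forces $\widetilde h$ to \emph{commensurate} the deck group: there is a finite-index subgroup $\pi_1\le\pi$ with $\widetilde h\,\pi_1\,\widetilde h^{-1}\le\pi$, so that $\gamma\mapsto\widetilde h\,\gamma\,\widetilde h^{-1}$ is an isomorphism $\pi_1\to\pi_2$ onto a finite-index subgroup, i.e.\ a representative of a class in $\mathrm{Comm}(\pi)$, which I take to be $\Psi(h)$. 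That $\Psi$ is a homomorphism follows from $(h_1h_2)|_{\ell_0}=\widetilde h_1\widetilde h_2$ and multiplicativity of conjugation. That $\Psi$ descends to isotopy classes uses discreteness: along a baseleaf-preserving isotopy $h_t$ the elements $\widetilde h_t\gamma\widetilde h_t^{-1}$ vary continuously inside the discrete group $\pi$, so the induced isomorphism on a common finite-index subgroup is locally constant, hence constant.

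For the inverse $\Phi\colon\mathrm{Comm}(\pi)\to\mathrm{MCG}_{\mathrm{blp}}(\mathcal S)$ I would start from a representative $\phi\colon\pi_1\to\pi_2$ between finite-index subgroups, corresponding to finite covers $S_1,S_2\to S$. Since $S_1,S_2$ are closed hyperbolic surfaces, Dehn--Nielsen--Baer realizes $\phi$ by a homeomorphism $f\colon S_1\to S_2$, unique up to homotopy, with a lift $\widetilde f$ satisfying $\widetilde f\,\gamma\,\widetilde f^{-1}=\phi(\gamma)$ for $\gamma\in\pi_1$. Because the tower of finite covers of $S_i$ is cofinal in the tower over $S$, the solenoid over $S_1$ and over $S_2$ are each canonically identified with $\mathcal S$; then $f$ lifts to a homeomorphism $\widehat f\colon\mathcal S\to\mathcal S$ which, after matching basepoints, preserves $\ell_0$ with $\widehat f|_{\ell_0}=\widetilde f$. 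Setting $\Phi[\phi]=[\,\widehat f\,]$ one checks $\Psi\circ\Phi=\mathrm{id}$ and $\Phi\circ\Psi=\mathrm{id}$ directly; in particular $\Psi$ is injective, for if $\Psi(h)$ is trivial then $\widetilde h$ commutes with some $\pi_1$, hence descends to a self-homeomorphism of $S_1$ inducing the identity of $\pi_1(S_1)$, which is homotopic to the identity, and the homotopy lifts to an equivariant isotopy from $\widetilde h$ to $\mathrm{id}_{\widetilde S}$ (using that surface groups are centerless), whence $h\simeq\mathrm{id}$. Naturality is built into the construction once the basepoint and baseleaf are fixed.

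The main obstacle is the key lemma above and its converse: that a baseleaf-preserving homeomorphism is, up to isotopy, governed by a commensuration of $\pi$, equivalently that up to isotopy it \emph{factors through a finite level} of the tower, being the lift of a homeomorphism between two finite covers. The subtlety is that an arbitrary self-homeomorphism of the compact lamination $\mathcal S$ need not be transversely locally constant, so one cannot read off $\pi_1,\pi_2$ naively; the argument must exploit the compactness of $\mathcal S$ and the inverse-limit structure of the transverse Cantor set — so that continuity over a compact totally disconnected fiber is ``eventually'' constant — together with the quasiconformal bounded-geometry control in the leaf direction to promote transverse near-constancy to an actual commensuration and to carry out the isotopies equivariantly. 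This is precisely where the hyperbolic geometry of the leaves and the Galois-theoretic structure of the covering tower must be made to interact, and it is where the bulk of Odden's work resides.
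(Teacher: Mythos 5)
First, a point of comparison: the paper you are working from does not prove this statement. Theorem \ref{th:virt} is quoted from Odden's thesis and paper, and the only methodological hint the survey offers is the remark immediately following it, that virtual automorphisms of $\pi_1(S)$ correspond to homotopy classes of basepoint-preserving homeomorphisms between pointed finite covers of $S$. Your outline is consistent with that remark and with Odden's actual strategy (restrict to the baseleaf in one direction; realize a virtual automorphism on a pair of finite covers by Dehn--Nielsen--Baer and lift through the cofinal tower in the other), so the architecture of your argument is the right one. The principal-bundle model $(\widetilde S\times\widehat\pi)/\pi$, the cofinality of the covers of $S_1$ among the covers of $S$, and the use of density of the baseleaf to see that $h$ is determined by $\widetilde h$ are all correct.

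The genuine gap is your ``key lemma,'' and not only because you leave it unproved. As literally stated it is false: for an arbitrary baseleaf-preserving homeomorphism $h$, the restriction $\widetilde h=h|_{\ell_0}$ is merely a homeomorphism of $\widetilde S$, so $\widetilde h\,\gamma\,\widetilde h^{-1}$ is a homeomorphism of $\widetilde S$ with no reason to coincide with any element of the fixed discrete group $\pi$; composing $h$ with a leafwise isotopy supported in one flow box already destroys any exact identity $\widetilde h\,\gamma\,\widetilde h^{-1}=\gamma'$. What is true is that $h$ is isotopic to a transversely locally constant map, i.e.\ one that descends to a homeomorphism between two finite covers of $S$, and only for such a normalized representative does conjugation by $\widetilde h$ carry a finite-index subgroup of $\pi$ into $\pi$; equivalently, the commensuration must be read off from the induced map on the holonomy pseudogroup of the transverse Cantor set, or from the boundary-circle action of a quasiconformal $\widetilde h$, rather than from $\widetilde h$ itself. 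This is not a cosmetic issue, because two later steps lean on the literal form: your argument that $\Psi$ descends to isotopy classes (``$\widetilde h_t\gamma\widetilde h_t^{-1}$ varies continuously in the discrete group $\pi$'') and your injectivity argument (``$\widetilde h$ commutes with some $\pi_1$'') both presuppose it. Until the rigidity statement --- every baseleaf-preserving homeomorphism is isotopic to one defined at a finite level of the tower, with the level and the induced isomorphism well defined up to passing to deeper levels --- is actually established, the forward map $\Psi$ is not defined on $\mathrm{MCG}_{\mathrm{blp}}(\mathcal S)$ at all. You correctly locate this as ``where the bulk of Odden's work resides,'' but that step is the theorem, so what you have is a correct plan rather than a proof.
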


The \emph{virtual automorphism group} of a group can be roughly described as a group of isomorphisms between finite index subgroups of $\pi_1(S)$ up to a certain equivalence relation. More precisely, a \emph{virtual automorphism} of a group is an equivalence class of isomorphisms between two (possibly the same) subgroups of finite index, where two such isomorphisms are said to be equivalent if they agree on some finite-index subgroup of their intersection. The virtual automorphism group  is also called the {\it abstract commensurator group} of the given group. 

The result of Theorem \ref{th:virt} can be considered as a genus-independent analogue of the Dehn-Nielsen Theorem saying that the mapping class group of a closed surface is canonically isomorphic to the outer automorphism of the fundamental group.

 There is a correspondence between equivalence classes of virtual automorphisms of the fundamental group of a surface and homotopy classes of basepoint-preserving homeomorphisms between pointed coverings of the surface. This gives a strong relation between the solenoid and the virtual automorphism group of $\pi_1(S)$.

 The mapping class group of the solenoid acts on the Teichm\"uller space of that space. This action has been studied by Biswas, Nag and Sullivan in \cite{BNS1996} and after that by others.

\v{S}ari\'c developed the theory of the punctured solenoid, see \cite{Saric} and \cite{PS2008}. The difference with the unpunctured version is that now branching is permitted now over the punctures (or distinguished points). The Teichm\"uller space of the punctured solenoid is also a separable Banach space admitting a complete Teichm\"uller metric, and it is also a universal object in  the sense that one can canonically embed in it every classical Teichm\"uller space of a hyperbolic punctured surface. Penner's decorated theory and his  $\lambda$-length coordinates can be adapted to an appropriate decorated Teichm\"uller space of  the punctured solenoid, and the structure of the decorated Teichm\" uller space for finite type surfaces survives in this setting. We refer the reader to the survey \cite{Penner-Hand} by Penner.  We also refer to the paper by Bonnot, Penner and \v{S}ari\'c \cite{BPS} in which they describe a cellular action of the baseleaf preserving mapping class group of the punctured solenoid which leads to a presentation of that group.

 Sullivan introduced the solenoid for understanding universal properties of dynamical systems, and also as a tool for attacking the Ehrenpreis conjecture. We recall that this conjecture (which was proved recently by Kahn and Markovich) says that for any two closed Riemann surfaces $S_1$ and $S_2$, there are finite unbranched covers of $S_1$ and $S_2$ which are of the same type and whose Teichm\"uller distance is arbitrarily small.
 
  The theory of the solenoid, even though it was not used in the proof of the Ehrenpreis conjecture, found several applications. For instance, in the paper \cite{BNS1996}, Biswas, Nag and Sullivan used this object to obtain a coherent, genus-independent description of the so-called Mumford isomorphisms, which hold in each genus, between certain higher DET bundles over the moduli space of a closed Riemann surface of genus $g$ and certain fixed (genus-independent) tensor powers of the Hodge line bundle over this moduli space (the Hodge  bundle being the first of these DET bundles). There are also relations with theoretical physics, namely, with Polyakov's  bosonic string theory.

Part of the Teichm\"uller theory of the solenoid has been so far developed, but there are other interesting research problems related to this object. One of them would be the following:
\begin{problem}
Discover the other facets of the Teichm\"uller theory of the solenoid, in  particular the hyperbolic point of view (Thurston's asymmetric metric, the length spectrum metric, etc.). 
\end{problem}
Another problem is the following:
\begin{problem}
Define (combinatorial, topological, etc.) spaces on which the mapping class group of the solenoid acts, in analogy with those that we surveyed in the previous sections, study these actions and the rigidity of the automorphism groups of these spaces.
\end{problem}

Finally, let us mention that there are number-theoretic questions related to the solenoid, in particular in connection with the study of lattices in $\mathrm{SL}(2,\mathbb{Z})$ (see e.g. \cite{Penner-Hand}). 
These questions and the profinite constructions in the theory of the solenoid lead to the following:

\begin{question}
 How does the solenoid theory fit exactly into the profinite theory of Grothendieck, and in particular, what is the relation between the solenoid mapping class group and the absolute Galois group ?
 \end{question}

\

\end{document}